\documentclass[12pt,bezier]{article}
\usepackage{times}
\usepackage{booktabs}
\usepackage{pifont}
\usepackage{floatrow}
\usepackage{caption}
\usepackage{mathrsfs}
\usepackage[fleqn]{amsmath}
\usepackage{amsfonts,amsthm,amssymb,mathrsfs,bbding}
\usepackage{txfonts}
\usepackage{graphics,multicol}
\usepackage{graphicx}
\usepackage{color}
\usepackage{amssymb}
\usepackage{caption}
\usepackage{cite}
\usepackage{latexsym,bm}
\usepackage{indentfirst}
\usepackage{color}
\usepackage[colorlinks=true,anchorcolor=blue,filecolor=blue,linkcolor=blue,urlcolor=blue,citecolor=blue]{hyperref}
\usepackage{extarrows}
\usepackage{cite}
\usepackage{latexsym,bm}
\usepackage{mathtools}
\evensidemargin=\oddsidemargin\topmargin=-1.5cm

\newtheorem{thm}{Theorem}[section]

\newtheorem{prob}{Problem}[section]
\newtheorem{claim}{Claim}

\newtheorem{lem}{Lemma}[section]
\newtheorem{cor}{Corollary}[section]

\theoremstyle{definition}

\addtocounter{section}{0}

\begin{document}
\title{Extremal results for graphs with binding number strictly less than $1/r$ \footnote{Supported by National Natural Science Foundation of China
{(No. 12371361)} and Natural Science Foundation of Henan Province {(Nos. 242300421045 and 252300420303)}.}}
\author{\bf Ruifang Liu$^{a}$, {\bf Hongyu Chen$^{a}$}, {\bf Ao Fan$^{a}$}\thanks{Corresponding author.
E-mail addresses: rfliu@zzu.edu.cn (R. Liu), isik104@163.com (H. Chen),
fanaozzu@163.com (A. Fan).}\\
{\footnotesize $^a$ School of Mathematics and Statistics, Zhengzhou University,
Zhengzhou, Henan 450001, China}}
\date{}

\maketitle
{\flushleft\large\bf Abstract}
The binding number $b(G)$ of a graph, introduced by Woodall [J. Combin. Theory, Ser. B, 1973],
is a central topic of both structural and extremal graph theory.
It is closely related to fundamental combinatorial and structural properties of graphs.

The graphs with $b(G)\geq1$ exhibit strong expansion properties and a highly
connected global structure. In contrast, the structure for graphs with $b(G)<1$ remains far less well understood.
Kane et al. [J. Graph Theory, 1981] proved that if $b(G)<1$, then every binding set
of $G$ is independent. Goddard and Swart [Quaest. Math., 1990] showed that if $b(G)\leq1$, then the toughness $\tau(G)\leq b(G).$
This makes it particularly interesting to investigate extremal problems for graphs with \(b(G)<1\). For any integer $r\geq1,$
we completely characterize the unique extremal graph that maximizes the size (spectral radius) among all graphs of order $n$ satisfying $b(G)<\frac{1}{r}.$

For any bipartite graph $G=(X,Y)$ on $n$ vertices, it is readily seen that
$b(G)\leq\min\{|X|/|Y|,|Y|/|X|\}\leq1.$ Notably, the complete balanced bipartite graph
$K_{\frac{n}{2}, \frac{n}{2}}$ achieves the maximum size (spectral radius) among all
bipartite graphs with $b(G)=1$. In this paper, we completely determine the extremal graphs maximizing the
size or the spectral radius among all bipartite graphs with
$b(G)<\frac{1}{r}$, where $r\geq1$ is an integer.

\begin{flushleft}
\textbf{Keywords:} Binding number, Size, Spectral radius, Extremal graphs
\end{flushleft}
\textbf{AMS Classification:} 05C50; 05C35

\section{Introduction}
Let $G$ be a simple graph with vertex set $V(G)$ and edge set $E(G)$.
The order and size of $G$ are denoted by $|V(G)|=n$ and $|E(G)|=e(G)$,
respectively. Let $G_1$ and $G_2$ be two vertex-disjoint graphs, and let $G_{1}\cup G_{2}$ denote
their disjoint union.
The join $G_1\vee G_2$ is the graph obtained from $G_{1}\cup G_{2}$ by adding all
possible edges between $V(G_1)$ and $V(G_2)$. For a vertex subset $S\subseteq G,$ let $G[S]$ and $|S|$ be the subgraph of $G$ induced by $S$ and the size of $S$, respectively. For a graph $G$ of order $n,$ its
adjacency matrix is the 0-1 matrix
$A(G)=(a_{ij})_{n\times n},$ where $a_{ij}=1$ if $v_{i}\sim v_{j}$ and $a_{ij}=0$
otherwise. Note that $A(G)$ is a real nonnegative symmetric matrix. So all its
eigenvalues are real and can be
ordered non-increasingly as $\lambda_{1}(G)\geq \lambda_{2}(G) \geq \cdots \geq
\lambda_{n}(G).$ The largest eigenvalue $\rho(G)=\lambda_1(G)$ is called the {\it
spectral radius} of $G.$ For
undefined terminology and notation, one can refer to \cite{Bondy2008, Brouwer2011}.

The binding number of graphs was introduced by Woodall \cite{Woodall1973}. For any
vertex $v\in V(G),$ let $N_G(v)$ denote its neighborhood in $G,$ and for
$S\subseteq V(G),$ define
$N_G(S)=\bigcup_{v\in S}N_G(v).$ The {\it binding number} $b(G)$ of a graph $G$ is
given by
$$b(G)=\min\bigg\{\frac{|N_G(S)|}{|S|}:\emptyset\neq S\subseteq V(G), N_G(S)\neq
V(G)\bigg\}.$$

As a fundamental graph invariant, the binding number possesses wide-ranging
applications in numerous fields, such as graph theory, network science, quantum
sensing, and information processing. A
central problem concerning the binding number lies in characterizing its bounds via
fundamental graph structural parameters, including the degree sequence
\cite{Bauer2011}, minimum degree
\cite{Bauer2012}, and connectivity \cite{Woodall1973}.

The binding number of a graph is closely related to the combinatorial structural
properties of the graph. The graphs with $b(G)\geq1$ exhibit strong expansion
properties and a highly connected
global structure. Some fundamental graph properties that are guaranteed by lower
bounds on $b(G)$ include Hamilton cycle \cite{Woodall1973, Woodall1978}, $k$-factor
\cite{Kano1992, Katerinis1987 },
$k$-extendibility \cite{Chen1995, Robertshaw2002}, $k$-clique \cite{Lyle59}.
In contrast, for graphs with $b(G)<1$, the structure is still not well
characterized. Kane et al.\cite{Kane1981} established the following useful result.
A non-empty subset $S\subseteq V(G)$ is a {\it binding set} if and only if
$b(G)=|N(S)|/|S|$.
\begin{thm}[Kane et al. \cite{Kane1981}]\label{thm1}
Let $G$ be a graph. If $b(G)<1$, then every binding set $S$ is independent.
\end{thm}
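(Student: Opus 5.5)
The plan is to argue by contradiction: if a binding set $S$ contained an edge, I would shrink $S$ to a smaller set with a strictly smaller ratio $|N_G(\cdot)|/|\cdot|$, contradicting that $S$ attains $b(G)$.

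Let $S$ be a binding set, and write $a=|N_G(S)|$ and $s=|S|$. Then $a/s=b(G)<1$, so $a<s$. Suppose $S$ is not independent, say $uv\in E(G)$ with $u,v\in S$. Then $u\in N_G(S)$, so the set $K=S\cap N_G(S)$ has size $k\ge 1$. Define
$$S'=S\setminus N_G(S).$$
Since $a<s$, we cannot have $S\subseteq N_G(S)$, so $S'\neq\emptyset$ and $|S'|=s-k$.

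The key structural step is to show that no vertex of $S'$ has a neighbor in $S$. Indeed, if $x\in S'$ were adjacent to some $y\in S$, then $x\in N_G(y)\subseteq N_G(S)$, contradicting $x\notin N_G(S)$. Hence $N_G(S')\subseteq N_G(S)\setminus S$. Moreover $K\subseteq N_G(S)\cap S$, so
$$|N_G(S')|\le |N_G(S)|-|K| = a-k.$$
Also $N_G(S')\subseteq N_G(S)\neq V(G)$, so $S'$ is admissible in the definition of $b(G)$.

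It remains to compare the ratios. We have
$$\frac{|N_G(S')|}{|S'|}\le\frac{a-k}{s-k}<\frac{a}{s}.$$
The strict inequality is equivalent to $s(a-k)<a(s-k)$, that is, to $ak<sk$. This holds because $k\ge1$ and $a<s$. So $S'$ has a ratio strictly less than $b(G)$, which contradicts the definition of $b(G)$ as a minimum. Therefore every binding set is independent.

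The only delicate point is the choice of $S'$. One must remove all of $S\cap N_G(S)$, not just an endpoint of the edge, so that the removed vertices are guaranteed to lie in $N_G(S)$ while also not being neighbors of what remains. The hypothesis $b(G)<1$ is used twice: to guarantee that $S'$ is nonempty, and to obtain the strict ratio decrease.
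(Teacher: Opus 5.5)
Your proof is correct and complete. The paper gives no proof of this statement to compare against: it quotes the result from Kane et al.\ and uses it only as a black box, in the form $S\cap N_G(S)=\emptyset$, in the proofs of Theorems~\ref{thm4} and~\ref{thm5}.

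Your argument is a self-contained proof of exactly that form. You remove all of $K=S\cap N_G(S)$ and observe that $N_G(S')\subseteq N_G(S)\setminus S$. You then note that $\frac{a-k}{s-k}<\frac{a}{s}$ is equivalent to $k(s-a)>0$. You also correctly identify why you must remove the whole of $K$ rather than one endpoint of the edge.

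One point you assert without justification is $N_G(S)\neq V(G)$. The paper defines a binding set only by $b(G)=|N(S)|/|S|$, so admissibility is not part of that definition. It is automatic here, since $|N_G(S)|=a<s\le |V(G)|$. Say this explicitly, because the admissibility of $S'$ rests on it. Alternatively, note directly that $|N_G(S')|\le a-k<|V(G)|$.
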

The {\it toughness} $\tau(G)=\mathrm{min}\{\frac{|S|}{c(G-S)}: S~\mbox{is a cut set
of vertices in}~G\}$ for $G\ncong K_n,$ which is initially proposed by Chv\'{a}tal
\cite{Chvatal}.
Goddard and Swart \cite{Goddard1990} discovered the relationship between $\tau(G)$
and
$b(G)$ when $b(G)\leq1.$
\begin{thm}[Goddard and Swart \cite{Goddard1990}]\label{thm2}
Let $G$ be a graph. If $b(G)\leq1$, then $b(G)\geq \tau(G).$
\end{thm}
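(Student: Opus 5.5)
The statement to prove is Theorem~\ref{thm2}. The plan is to exhibit, from a binding set $S$, an explicit vertex cut $C$ with $c(G-C)\ge |C|/b(G)$; the natural choice is $C=N_G(S)$. Write $b=b(G)\le 1$. The degenerate cases come first. If some binding set has $N_G(S)=\emptyset$, then $b=0$ and $G$ has an isolated vertex. Hence $G$ is disconnected (when $n\ge 2$), so $\tau(G)=0\le b$ using the empty cut. Also, $b(K_n)=n-1$, so $b\le 1$ with $G$ complete forces $n\le 2$, which is handled by the convention for $\tau(K_n)$. So assume $G$ is not complete and $N_G(S)\neq\emptyset$ for the binding sets under consideration.

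\emph{Case $b<1$.} Let $S$ be a binding set. By Theorem~\ref{thm1}, $S$ is independent, so $S\cap N_G(S)=\emptyset$. Since $1\le |N_G(S)|<|S|$, we get $|S|\ge 2$. Put $C=N_G(S)$. Every vertex of $S$ has all its neighbours in $C$, so in $G-C$ each vertex of $S$ is an isolated vertex. Thus $G-C$ has at least $|S|\ge 2$ components, $C$ is a cut set, and
\[
\tau(G)\le \frac{|C|}{c(G-C)}\le \frac{|N_G(S)|}{|S|}=b .
\]

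\emph{Case $b=1$.} Here we need a cut $C$ with $c(G-C)\ge |C|$, and Theorem~\ref{thm1} is no longer available, so independence must be manufactured. Among all nonempty $S$ with $N_G(S)\ne V(G)$ and $|N_G(S)|\le |S|$ (so that $|N_G(S)|=|S|$ since $b=1$), choose one with $|S|$ minimum. I would then show $S$ is independent by a minimality argument. If $a\in S\cap N_G(S)$, pass to $S'=S\setminus\{a\}$ and compare $|N_G(S')|$ with $|N_G(S)|$, aiming to show $N_G(S')$ loses at least one vertex. That would give $|N_G(S')|\le |S'|$, contradicting minimality. (If $S'=\emptyset$, then $|S|=1$ and $a\in N_G(a)$ is impossible.)

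This removal step is the part I expect to be the main obstacle, since deleting $a$ need not shrink $N_G(S)$. If it fails, I would try deleting a neighbour $a'\in S$ of $a$ instead. Alternatively, I would choose $S$ to minimise $|S\cap N_G(S)|$ and use $|N_G(S\setminus\{a\})|\ge |S|-1$ to bound how many vertices can be lost.

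Once $S$ is independent with $|N_G(S)|=|S|$, the argument of the first case applies with $C=N_G(S)$. Each vertex of $S$ is isolated in $G-C$, so $c(G-C)\ge |S|=|C|$. We need $|S|\ge 2$ here: if $|S|=1$, then a vertex of degree $1$ whose neighbour is a cut vertex (or handled directly) gives $\tau\le 1$. Hence $\tau(G)\le 1=b$.
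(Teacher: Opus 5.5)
The paper only quotes this result from Goddard and Swart, so there is no in-paper proof to compare with. Judged on its own, your case $b<1$ is correct, but the case $b=1$ has a genuine gap, and the route you sketch for it cannot succeed as stated.

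Deleting one vertex $a\in S\cap N_G(S)$ need not shrink the neighbourhood. The vertex $a$ stays in $N_G(S\setminus\{a\})$ through its neighbour in $S$, and its other neighbours may be covered by other vertices of $S$. More seriously, the lemma you aim for is false. In $G=2K_3$ we have $b(G)=1$, every singleton and every pair satisfies $|N_G(S)|>|S|$, and the only minimum-size sets with $|N_G(S)|=|S|$ and $N_G(S)\neq V(G)$ are the two triangles. In fact $2K_3$ has no independent $S$ with $|N_G(S)|\le|S|$ at all. So no variant of your minimality argument (deleting a neighbour $a'$, or minimising $|S\cap N_G(S)|$) can produce one. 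Connectivity must be used somewhere, and your degenerate-case analysis only removes graphs with isolated vertices.

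The missing idea is to remove all of $B=S\cap N_G(S)$ at once. Let $S$ be a binding set and put $A=S\setminus N_G(S)$. No vertex of $A$ has a neighbour in $S$, so $A$ is independent and $N_G(A)\subseteq N_G(S)\setminus B$.

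If $A=\emptyset$, then $S\subseteq N_G(S)$ together with $|N_G(S)|\le |S|$ gives $N_G(S)=S\neq V(G)$. So $S$ is a union of components, $G$ is disconnected, and $\tau(G)=0\le b$.

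If $A\neq\emptyset$, then, using $|N_G(S)|\le |S|$,
\[
\frac{|N_G(A)|}{|A|}\le\frac{|N_G(S)|-|B|}{|S|-|B|}\le\frac{|N_G(S)|}{|S|}=b .
\]
The set $C=N_G(A)$ isolates every vertex of $A$, so your first-case computation gives $\tau(G)\le |C|/c(G-C)\le |C|/|A|\le b$. When $|A|=1$ we have $|C|\le 1$, and $C$ is a cut set as soon as $G$ has at least three vertices.

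This handles $b<1$ and $b=1$ uniformly and makes Theorem~\ref{thm1} unnecessary. Indeed, it is the same computation that proves Theorem~\ref{thm1}: the middle inequality is strict when $b<1$ and $B\neq\emptyset$.
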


Naturally, one wonders about extremal results of graphs with $b(G)<1.$ So we propose the following intriguing problem.

\begin{prob}\label{pro1}
What is the maximum size (spectral radius) among all graphs of order $n$ satisfying $b(G)<\frac{1}{r},$ where $r\geq1$ is an integer?
\end{prob}

Note that $b(G)=0$ if and only if $G$ contains isolated vertices. It is easy to see that the graph $K_{n-1}\cup K_1$ attains the maximum size (spectral radius) among all graphs with $b(G)=0$. Therefore, for Problem \ref{pro1}, it suffices to restrict our attention to graphs without isolated vertices. Let $\mathcal{G}_{n,r}$ be the family of graphs without isolated vertices of order $n$ such that $b(G)<\frac{1}{r}$ for every $G\in\mathcal{G}_{n,r}$, where $r\ge1$ is an integer.

\begin{thm}\label{thm4}
Let $r\geq1$ and $n\geq r+13$ be two integers, and let $G\in\mathcal{G}_{n,r}.$ Then $$e(G)\leq{n-r-1\choose 2}+r+1$$ with equality if and only if $G\cong K_1\vee(K_{n-r-2}\cup(r+1)K_1).$
\end{thm}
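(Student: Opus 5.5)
Let $G\in\mathcal{G}_{n,r}$ and pick a binding set $S$, so that $|N_G(S)|/|S|=b(G)<\frac1r\le 1$. Put $T=N_G(S)$, $s=|S|$ and $t=|T|$. Then $rt<s$, and since $G$ has no isolated vertices, $t\ge1$. By Theorem \ref{thm1}, $S$ is independent, so $S\cap T=\emptyset$. Every edge incident to $S$ goes to $T$, hence $e(G)\le e(S,T)+\binom{n-s}{2}\le st+\binom{n-s}{2}$. For fixed $t$ this bound is monotone in $s$ in a convex way, so the plan is to maximize the function $f(s,t)=st+\binom{n-s}{2}$ over the feasible region $t\ge1$, $s\ge rt+1$, $s+t\le n$, and then check that the maximum occurs only at the point $(s,t)=(r+1,1)$.

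At $t=1$, $s=r+1$ we get $f=(r+1)+\binom{n-r-1}{2}$, which is exactly the bound claimed in Theorem \ref{thm4}. It is also attained by $K_1\vee(K_{n-r-2}\cup(r+1)K_1)$: take $S$ to be the $r+1$ pendant vertices, so $N(S)$ is the single center and $b\le 1/(r+1)<1/r$. For fixed $t$, $f(s,t)$ is a convex quadratic in $s$, so its maximum is at an endpoint, $s=rt+1$ or $s=n-t$. I will compare the two endpoints with the target.

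At $s=rt+1$ we have $f=(rt+1)t+\binom{n-rt-1}{2}$. Going from $t$ to $t+1$, the binomial term decreases by roughly $r(n-rt)$, while the first term increases by only about $2rt$. So whenever $n$ is large compared with $rt$, the value drops as $t$ grows. When $t$ is large, i.e. $rt$ is close to $n$, I will instead use convexity in $t$ as well and look at the extreme value $t\approx (n-1)/(r+1)$. There $f\approx st+\binom{t}{2}$, which is at most about $n^2/(4r)$ and falls short of $\binom{n-r-1}{2}$ once $n\ge r+13$.

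At $s=n-t$ we have $f=(n-t)t+\binom{t}{2}$ with $t\le (n-1)/(r+1)$. This is increasing in $t$, so its largest value is about $n^2\frac{r+1/2}{(r+1)^2}\le \frac{3}{8}n^2$ for $r\ge1$. That is less than $\binom{n-r-1}{2}$ provided $n$ is large relative to $r$, which is where the hypothesis $n\ge r+13$ comes in. I expect this comparison to be the main obstacle. When $r$ is large, $t$ is forced to equal $1$, so only the case $t=1$ occurs and there $f(s,1)$ is convex with endpoints $s=r+1$ and $s=n-1$. The value at $s=n-1$ is $n-1$, which is small. For moderate $r$ I have to verify carefully that $\frac{r+1/2}{(r+1)^2}n^2<\binom{n-r-1}{2}$ whenever $n\ge r+13$. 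I will do this by writing $n=r+13+k$ and expanding, handling small $r$ separately if that turns out to be necessary.

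For the equality case, equality forces $t=1$, $s=r+1$, $G[V\setminus S]$ complete, and every vertex of $S$ adjacent to the single vertex of $T$. This gives exactly $K_1\vee(K_{n-r-2}\cup(r+1)K_1)$.
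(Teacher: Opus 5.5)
Your reduction $e(G)\le st+\binom{n-s}{2}$ over $t\ge 1$, $rt+1\le s\le n-t$ is sound. It is the paper's argument in direct form: $st+\binom{n-s}{2}$ is exactly the size of the saturated graph $K_t\vee(K_{n-s-t}\cup sK_1)$ reached in Claim~\ref{claim1}. The gap is in the quantitative comparison, which carries all the content, and there your estimates are wrong.

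At the corner $t=t_{\max}=\lfloor\frac{n-1}{r+1}\rfloor$, $s=rt+1$, the value is $(n-t)t+\binom{t}{2}+\binom{t_2}{2}$ with $t_2=n-s-t$. That is roughly $\frac{(2r+1)n^2}{2(r+1)^2}$, not ``at most about $n^2/(4r)$''. For $r=1$, $n=14$ it equals $f(7,6)=63>49$.

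The inequality you plan to verify, $\frac{r+1/2}{(r+1)^2}n^2<\binom{n-r-1}{2}$ for $n\ge r+13$, is false. For $r=1$ it fails for every $14\le n\le 18$; at $n=14$ the two sides are $73.5$ and $66$. So that step cannot be carried out as planned. Dropping the $+r+1$ and replacing $\frac{n-1}{r+1}$ by $\frac{n}{r+1}$ loses too much. In fact no loose approximation can work, because the hypothesis is sharp: for $r=1$, $n=13$, the graph $K_6\vee 7K_1$ has no isolated vertices, $b\le 6/7<1$, and $57=\binom{11}{2}+2$ edges.

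The repair is an exact computation, and it is shorter than your plan.

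First, $f(s+1,t)-f(s,t)=s+t+1-n\le 0$ for $s\le n-t-1$. So $f$ is non-increasing in $s$ on the feasible range, and $f(s,t)\le h(t):=f(rt+1,t)$. The endpoint $s=n-t$ never needs separate treatment; this is the paper's Claim~\ref{claim3}.

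Second, $h(t)=\binom{n}{2}-f_1(t)$, where $f_1$ is the paper's concave quadratic with axis $c=\frac{nr-3r/2-1}{r(r+2)}$. One checks
\[
2(r+1)\Bigl(nr-\frac{3r}{2}-1\Bigr)-r(r+2)(n+r)=r^2(n-r-5)-5r-2\ \ge\ 8r^2-5r-2>0
\]
for $n\ge r+13$. This says $2c-1>\frac{n-1}{r+1}\ge t_{\max}$. Hence $f_1(t)>f_1(1)$, that is $h(t)<h(1)=\binom{n-r-1}{2}+r+1$, for every $2\le t\le t_{\max}$.

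Third, for $t=1$ the differences $s+2-n$ are negative for $s\le n-3$ and zero at $s=n-2$. So $f(s,1)<f(r+1,1)$ for all $s>r+1$. This gives both the bound and your equality analysis.

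The paper instead first rules out $t_2\in\{0,1\}$ in Claim~\ref{claim2}, then runs the same axis argument only for $t\le\lfloor\frac{n-3}{r+1}\rfloor$. Your direct bound makes that claim unnecessary.
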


\begin{thm}\label{thm5}
Let $r\geq1$ and $n\geq 2r+15$ be two integers, and let $G\in \mathcal{G}_{n,r}.$  Then $$\rho(G)\leq \rho
(K_1\vee(K_{n-r-2}\cup(r+1)K_1))$$ with equality if and only if $G\cong
K_1\vee(K_{n-r-2}\cup(r+1)K_1).$
\end{thm}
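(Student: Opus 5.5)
Let $G\in\mathcal{G}_{n,r}$ attain the maximum spectral radius. The plan has three stages: reduce $G$ to a structured family, locate the structure inside a small family of joins, and then compare spectral radii.

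\textbf{Structural reduction.} Since $b(G)<\frac1r\le 1$, Theorem \ref{thm1} gives an independent binding set $S$. Put $T=N_G(S)$. Then $|T|<|S|/r$, i.e. $r|T|<|S|$, and $T\neq V(G)$. Because $G$ has no isolated vertices, $|T|\ge 1$. Write $s=|S|$, $t=|T|$ and $R=V(G)\setminus(S\cup T)$. Adding edges inside $T\cup R$, and all edges between $T$ and $S\cup R$, keeps $N(S)=T$. Since $N(S)\neq V(G)$ still holds, $b$ stays below $1/r$, and no vertex becomes isolated. Adding edges strictly increases $\rho$ by Perron--Frobenius. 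Hence by maximality
\[
G\cong K_t\vee\bigl(K_{n-s-t}\cup sK_1\bigr),
\]
where $S$ is independent and $R$ may be empty. If $R=\emptyset$, then $N(S)=T\ne V(G)$ fails unless $S$ contains a vertex outside $N(S)$. Since $S$ is independent, $S\cap T=\emptyset$, so $N(S)\neq V(G)$ holds automatically. Thus $R=\emptyset$ is allowed only if the resulting graph $K_t\vee sK_1$ itself satisfies $b<1/r$. I will handle this as a boundary case. Maximality also forces $s$ as small as possible, namely $s=rt+1$: moving a vertex of $S$ into $R$ adds edges, as long as $rt<s-1$.

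\textbf{Comparison over $t$.} It remains to show that
\[
\rho\bigl(K_t\vee(K_{n-(r+1)t-1}\cup(rt+1)K_1)\bigr)
\]
is maximized at $t=1$, for $1\le t\le (n-1)/(r+1)$. I also need to compare against the $R=\emptyset$ case, which is dominated by the $t=1$ graph for large $n$.

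\textbf{Main obstacle.} This comparison is the main difficulty. I would use the equitable quotient matrix of the partition $(T,R,S)$,
\[
B_t=\begin{pmatrix} t-1 & n-(r+1)t-1 & rt+1\\ t & n-(r+1)t-2 & 0\\ t & 0 & 0\end{pmatrix}.
\]
The spectral radius of $G$ is the largest root of the cubic $f_t(x)=\det(xI-B_t)$. Let $\theta_1=\rho(G_1)$ for the $t=1$ graph. Since $G_1$ contains $K_{n-r-1}$, we have $\theta_1>n-r-2$. It then suffices to show $f_t(x)>0$ for all $x\ge\theta_1$ when $t\ge2$. For this I would compute $f_t(x)-f_1(x)$ and evaluate it at $x=\theta_1$. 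The difference should factor as $(t-1)$ times a quadratic-in-$x$ expression that is positive for $x\ge n-r-2$ once $n\ge 2r+15$; this is where the hypothesis on $n$ enters. For $t$ near the upper end, the component $K_{n-(r+1)t-1}$ is small, and a crude bound such as $\rho\le$ (a Hong-type bound via $e(G)$) should suffice, giving $\rho<n-r-2$. The combination of these two regimes, the quotient-polynomial difference for small $t$ and the edge-count bound for large $t$, is the part I expect to require careful case analysis. Equality holds only at $t=1$, $s=r+1$, which gives uniqueness.
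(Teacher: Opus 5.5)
Your proposal follows the paper's proof essentially step for step: the independent binding set from Theorem \ref{thm1}, completion to $K_t\vee(K_{n-(r+1)t-1}\cup(rt+1)K_1)$ with $s=rt+1$, the same $3\times 3$ equitable quotient matrix, and the factorization $f_t(x)-f_1(x)=(t-1)\cdot q_t(x)$ with $q_t$ quadratic in $x$ and positive for $x\ge n-r-2$ once $n\ge 2r+15$. The second ``large-$t$'' Hong-type regime you anticipate is not needed, because $q_t(n-r-2)$ is decreasing in $t$ and still positive at the top of the range. Also, the paper disposes of the $|R|\le 1$ boundary cases up front: for $|R|=0$ it shows $\rho(K_\alpha\vee(n-\alpha)K_1)<n-r-2$ with $\alpha=\lfloor\frac{n-1}{r+1}\rfloor$, and for $|R|=1$ it notes that $S\cup R$ would have a strictly smaller ratio than the binding set $S$.
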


By taking $r=1$ in Theorems \ref{thm4} and \ref{thm5}, one can get the following result directly.

\begin{cor}
[Fan and Lin \cite{Fan2024}]
Let $G$ be a connected graph of order $n\geq 14$ with $b(G)<1$. Each of the
following holds.\\
(i) $e(G)\leq{n-2\choose 2}+2$ with equality if and only if $G\cong
K_1\vee(K_{n-3}\cup 2K_1).$\\
(ii) $\rho(G)\leq\rho(K_1\vee(K_{n-3}\cup2K_1))$ with equality if and only if
$G\cong K_1\vee(K_{n-3}\cup2K_1).$
\end{cor}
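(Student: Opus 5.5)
The plan is to specialize Theorems \ref{thm4} and \ref{thm5} to $r=1$ and check that their hypotheses hold. First, let $G$ be a connected graph of order $n\geq 14$ with $b(G)<1$. A connected graph on at least two vertices has no isolated vertices, so $G\in\mathcal{G}_{n,1}$. With $r=1$ the extremal graph $K_1\vee(K_{n-r-2}\cup(r+1)K_1)$ becomes $K_1\vee(K_{n-3}\cup 2K_1)$, and the bound $\binom{n-r-1}{2}+r+1$ becomes $\binom{n-2}{2}+2$. This graph is connected, so the extremal graph of Theorems \ref{thm4} and \ref{thm5} lies in the smaller class considered here. Hence the uniqueness statements carry over verbatim.

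For part (i), Theorem \ref{thm4} requires $n\geq r+13=14$, which is exactly our hypothesis. So (i) follows immediately, including the characterization of equality.

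For part (ii), Theorem \ref{thm5} requires $n\geq 2r+15=17$. So for $n\geq 17$ (ii) follows at once, and the main obstacle is the three residual orders $n\in\{14,15,16\}$. These are exactly the cases covered by Fan and Lin \cite{Fan2024}, so one option is simply to cite them. For a self-contained argument I would proceed as follows.
\begin{itemize}
\item[(a)] Use Theorem \ref{thm1} to take an independent binding set $S$ with $s=|S|$ and $|N(S)|=t\leq s-1$.
\item[(b)] Observe that $G$ is then a spanning subgraph of $K_t\vee(K_{n-s-t}\cup sK_1)$. Since adding edges strictly increases the spectral radius of a connected graph, it suffices to maximize $\rho$ over these graphs with $1\le t\le s-1$ and $s+t\le n$ (the case $s+t=n$ gives a subgraph of a complete bipartite graph, which has small spectral radius).
\item[(c)] For each such pair $(s,t)$, compute $\rho$ as the largest root of the cubic characteristic polynomial of the $3\times3$ equitable quotient matrix with parts $K_t$, $K_{n-s-t}$ and $sK_1$.
\item[(d)] Compare each of these with $\rho(K_1\vee(K_{n-3}\cup2K_1))$, which is larger than $n-3$ because the graph contains $K_{n-2}$. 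For $t\ge2$ or $s\ge3$ the large clique has at most $n-4$ vertices. A standard estimate, evaluating the cubic at $n-3$, should then show that the root stays below the extremal value; only finitely many $(s,t)$ need to be checked for $n\le 16$.
\end{itemize}

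Step (d) for the small cases is where the real work lies. I would first attempt to settle it uniformly by the polynomial comparison above, and fall back on direct numerical evaluation for the finitely many pairs $(s,t)$ if the general estimate is not sharp enough at $n=14$.
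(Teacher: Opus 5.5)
Your proposal is correct, and on part (ii) it is more careful than the paper. The paper's whole argument is the sentence ``take $r=1$ in Theorems~\ref{thm4} and~\ref{thm5}''. For (i) this is exactly what you do. For (ii), however, Theorem~\ref{thm5} at $r=1$ needs $n\ge 2r+15=17$. So, as you noticed, the paper's deduction does not cover $n\in\{14,15,16\}$ as literally stated.

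Your steps (a)--(d) close this gap, but no case-by-case numerics are needed. They are essentially the paper's proof of Theorem~\ref{thm5} specialised to $r=1$, and in that proof the hypothesis $n\ge 2r+15$ enters only through two estimates which, at $r=1$, hold much earlier.
\begin{itemize}
\item In Claim~\ref{claim6} one needs $f_2(n-3)>0$ with $\alpha=\lfloor\frac{n-1}{2}\rfloor$. This equals $\frac{n^2-12n+19}{4}$ for odd $n$ and $\frac{(n-2)(n-8)}{4}$ for even $n$, both positive for $n\ge 11$. This also settles your case $s+t=n$.
\item At the end one needs $f_5(n-3)=n^2-5n+4-2t^2-3t>0$. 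This is at least $\frac{(n-1)(n-10)}{2}>0$ for all $t\le\frac{n-1}{2}$.
\end{itemize}
So the paper's proof, rerun verbatim with $r=1$, gives (ii) for every $n\ge 14$.

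Your own version of (d) also works: it shows that every non-extremal candidate has $\rho<n-3<\rho(K_1\vee(K_{n-3}\cup 2K_1))$. You should first reduce to $s=t+1$, which you do not state. For fixed $t$, a larger $s$ gives a spanning subgraph, as in Claim~\ref{claim7}. Then $f_3(n-3)=f_4(n-3)+(t-1)f_5(n-3)=-2+(t-1)f_5(n-3)>0$ for $t\ge 2$. You need positivity on the whole half-line $[n-3,\infty)$, not just at $n-3$. Here it holds because $f_3'(n-3)>0$ and $f_3''>0$ there.

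There are two small slips. First, if $s+t=n$, then $G$ is a spanning subgraph of the complete split graph $K_t\vee sK_1$, not of a complete bipartite graph, because $N(S)$ may span edges. Second, in (d) only the middle clique $K_{n-s-t}$ has at most $n-4$ vertices. The clique on $N(S)\cup V(K_{n-s-t})$ has $n-s\le n-3$ vertices, so clique size alone does not separate the candidates; the cubic comparison is what does the work.
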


The research concerning the binding number of bipartite graphs has not yet been involved in the literature.
 For any bipartite graph $G=(X,Y)$ on $n$ vertices, it is readily seen that
$b(G)\leq\min\{|X|/|Y|,|Y|/|X|\}\leq1.$ Notably, the complete balanced bipartite graph
$K_{\frac{n}{2}, \frac{n}{2}}$ achieves the maximum size (spectral radius) over all
bipartite graphs with $b(G)=1$. This naturally leads to an interesting problem.

\begin{prob}\label{pro2}
What is the maximum size (spectral radius) among all bipartite graphs of order $n$ satisfying $b(G)<\frac{1}{r},$ where $r\geq1$ is an integer?
\end{prob}
Note that $b(G)=0$ if and only if $G$ contains isolated vertices. It is easy to see that the graph $K_{ \big\lfloor\frac{n-1}{2}\big\rfloor, \big\lceil\frac{n-1}{2}\big\rceil}\cup K_1$ attains the maximum size (spectral radius) among all bipartite graphs with $b(G)=0$. Consequently, when addressing Problem \ref{pro2}, it is sufficient to focus on graphs that do not contain isolated vertices. Let $\mathcal{B}_{n,r}$ be the family of $n$-vertex bipartite graphs without isolated vertices satisfying $b(G)<\frac{1}{r}$ for each $G\in \mathcal{B}_{n,r},$ where $r\geq 1$ is an integer.

A connected bipartite graph $G =(X, Y)$ is called a {\it double nested graph} if its
vertex sets admit partitions
$X = X_1\cup X_2\cup\cdots\cup X_h$ and $Y=Y_1\cup Y_2\cup\cdots\cup Y_h,$ where
$X_i$ and $Y_j$ are independent sets, and all vertices in $X_i$ are adjacent
to every vertex in
$\bigcup_{j=1}^{h+1-i} Y_j$ for $1\leq i\leq h.$  In the following figure, large solid circles stand for independent sets, and each line between two large solid circles means that all vertices in one large solid circle are adjacent to all vertices in the other one. Let $|X_i|= p_i$ and $|Y_i|=q_i$ for $i=1,2,\ldots,h.$ We denote the double nested graph by $D(p_1, p_2,\ldots, p_h; q_1, q_2,\ldots, q_h)$
(see Fig. \ref{f1}).
\begin{figure}[H]
\centering
\includegraphics[width=0.5\textwidth]{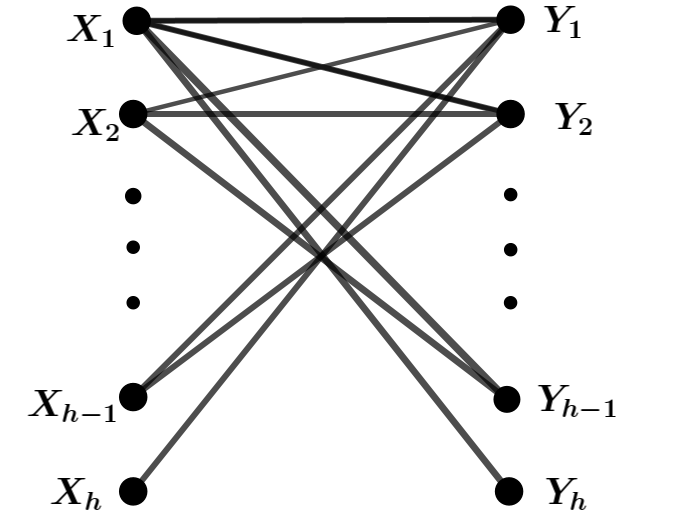}
\caption{Graph $D(p_1, p_2,\ldots, p_h; q_1, q_2,\ldots, q_h).$}\label{f1}
\end{figure}
Define $f(n,r)=\big\lfloor\frac{n-1}{r+1}\big\rfloor\left(n-\big\lfloor\frac{n-1}{r+1}\big\rfloor\right)$ and $g(n,r)=\big\lfloor\frac{n-r-1}{2}\big\rfloor\big\lceil\frac{n-r-1}{2}\big\rceil+r+1.$
Focusing on Problem \ref{pro2}, we prove the following results.
\begin{thm}\label{thm6}
Let $r\geq1$ and $n\geq r^2+r+1$ be two integers, and let $G\in \mathcal{B}_{n,r}.$ Then\\
(i) If $r=1$, then $e(G)\leq f(n,1)$
with equality if and only if $G\cong
K_{n-\big\lfloor\frac{n-1}{2}\big\rfloor, \big\lfloor\frac{n-1}{2}\big\rfloor}.$\\
(ii) If $r\geq2$ and $f(n,r)>g(n,r)$, then $e(G)\leq f(n,r)$ with equality if and
only if $G\cong K_{n-\big\lfloor\frac{n-1}{r+1}\big\rfloor,
\big\lfloor\frac{n-1}{r+1}\big\rfloor}.$\\
(iii) If $r\geq2$ and $f(n,r)<g(n,r)$, then $e(G)\leq g(n,r)$ with equality if and
only if $G\in\left\{ D\left(\big\lceil\frac{n-r-1}{2}\big\rceil,r+1;1,
\big\lfloor\frac{n-r-1}{2}\big\rfloor-1\right),
D\left(\big\lfloor\frac{n-r-1}{2}\big\rfloor,r+1;1,\big\lceil\frac{n-r-1}{2}\big\rceil-1\right)\right\}.$\\
(iv) If $r\geq2$ and $f(n,r)=g(n,r),$ then $e(G)\leq f(n,r)$ with equality if and
only if $G\in\left\{ K_{n-\big\lfloor\frac{n-1}{r+1}\big\rfloor,
\big\lfloor\frac{n-1}{r+1}\big\rfloor},
D\left(\big\lceil\frac{n-r-1}{2}\big\rceil,r+1;1,\big\lfloor\frac{n-r-1}{2}\big\rfloor-1\right), D\left(\big\lfloor\frac{n-r-1}{2}\big\rfloor,r+1;1,\big\lceil\frac{n-r-1}{2}\big\rceil-1\right)\right\}.$
\end{thm}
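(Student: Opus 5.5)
Since $b(G)<\frac1r\le 1$, Theorem \ref{thm1} gives a binding set $S$ that is independent with $|N(S)|<|S|/r$ and $N(S)\neq V(G)$. Put $s=|S|$ and $t=|N(S)|$, so $rt<s$, i.e.\ $rt\le s-1$, and $t\ge1$ because there are no isolated vertices. Let $T=N(S)$ and $R=V(G)\setminus(S\cup T)$. We have $R\neq\emptyset$: otherwise $N(S)=V(G)$ fails only if some vertex of $S\cup T$ lies outside $N(S)$, and the plan is to handle that configuration inside the same count. Every edge of $G$ lies in $S\times T$, inside $T$, or between $T$ and $R$ or inside $R$; $S$ has no neighbours in $R$.

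The plan is to fix the pair $(s,t)$ and bound $e(G)$ using bipartiteness. Edges at $S$ number at most $st$. The remaining graph $G[T\cup R]$ is bipartite on $n-s$ vertices, so it has at most $\lfloor (n-s)^2/4\rfloor$ edges. That bound is too crude when $T$ is small, however, so I would refine it by saturation.

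\textbf{Saturation step.} Adding edges while keeping $G$ bipartite and keeping $N(S)=T$ does not increase $b(G)$, because $S$ still witnesses $b(G)\le t/s<1/r$. So I may assume $G$ is edge-maximal with respect to these constraints, and then argue that only two candidates survive.

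\textbf{Case $R=\emptyset$ essentially.} All of $S$ lies on one side. If $T$ is the whole other side, then $G=K_{n-t,t}$ with $N(S)=T\neq V(G)$, using only a proper subset of the big side as $S$. The binding ratio is then $t/(n-t)$, and we need $(r+1)t\le n-1$, i.e.\ $t\le\lfloor\frac{n-1}{r+1}\rfloor$. Since $t(n-t)$ is increasing for $t<n/2$, the maximum is $f(n,r)$, attained at $t=\lfloor\frac{n-1}{r+1}\rfloor$. (Here one must check that taking $S$ the entire big side gives $N(S)=T\ne V(G)$ as well, which is fine.)

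\textbf{Case $R\neq\emptyset$.} Write the bipartition of $G$ as $(X,Y)$ with $S\subseteq X$. Write $T=T_Y$, which lies in $Y$, and split $R=R_X\cup R_Y$. At saturation, $T$ is joined to all of $S\cup R_X$, and $R_X$ is joined to $R_Y$. Setting $x=|R_X|$, $y=|R_Y|$, the edge count is
\begin{equation*}
e=t(s+x)+xy, \qquad s+t+x+y=n,\quad rt\le s-1.
\end{equation*}
For fixed $s$ and $t$, maximizing over $x+y=n-s-t$ gives $xy+tx$. Treating this as a function of $(s,t)$, I would show it is maximized at an endpoint. One endpoint is $x=0$, which is the complete bipartite case above. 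The other is $t=1$, $s=r+1$, which gives $(r+1)+x(y+1)$ with $x+(y+1)=n-r-1$. That is exactly $g(n,r)$, and it is realized by the double nested graph $D(x,r+1;1,y)$ with $x=\lceil\frac{n-r-1}2\rceil$ or $\lfloor\frac{n-r-1}2\rfloor$.

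\textbf{Main obstacle.} The hardest step is the convexity/endpoint analysis showing that intermediate values $1<t<\lfloor\frac{n-1}{r+1}\rfloor$ with $x>0$ give strictly fewer edges than $\max\{f,g\}$. I would first set $s=rt+1$, since increasing $s$ at the expense of $y$ is only helpful when $t>x$. Then I would view $e$ as a quadratic in $t$ (after optimizing $x$), which is convex along the relevant line, so the maximum sits at the ends. The hypothesis $n\ge r^2+r+1$ is used to compare the boundary cases and to rule out corner configurations, such as $x$ exceeding the side sizes. For $r=1$ one checks directly that $f(n,1)\ge g(n,1)$, with equality impossible except in the degenerate $K$ form; this yields (i). The equality characterizations in (ii)--(iv) then follow by tracking when each inequality in the saturation and endpoint steps is tight.
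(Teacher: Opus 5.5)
There is a genuine gap, in the step you flag as the main obstacle. With $s=rt+1$ and $m=n-(r+1)t-1=x+y$, your saturated count is $e=t(n-t)+y(m-t-y)$.

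\begin{itemize}
\item While $(r+2)t<n-1$, optimizing the split gives $t(n-t)+\lfloor (m-t)^2/4\rfloor$, which is convex in $t$.
\item Once $(r+2)t\ge n-1$, the optimum is $y=0$ and the count is $t(n-t)$, which is concave.
\end{itemize}

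So the $x$-optimized count is not convex on $1\le t\le\alpha:=\lfloor\frac{n-1}{r+1}\rfloor$, and ``the maximum sits at the ends'' does not follow. Configurations with $t$ large and $y\ge 1$ small are left uncontrolled. Also, the complete bipartite end is $y=0$, not $x=0$; $x=0$ forces $y=0$.

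The paper captures these configurations by fixing the side sizes $(p,q)$ first (Lemma~\ref{le6}). Then $e\le pq-(rt+1)(q-t)$ for $t<q$ is genuinely convex in $t$. Its upper end $t=q-1$, admissible when $r(q-1)+2\le p\le rq$, gives a third candidate $D(p-r(q-1)-1,r(q-1)+1;q-1,1)$ with $pq-r(q-1)-1$ edges. This graph is neither $K_{n-\alpha,\alpha}$ nor a $t=1$ graph.

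Ruling it out is exactly where $n\ge r^2+r+1$ is used. Write $n-1=(r+1)\alpha+\rho$ with $0\le\rho\le r$.
\begin{itemize}
\item For $q\le\alpha$, this graph is beaten by $K_{n-q,q}$.
\item For $q>\alpha$, admissibility forces $q=\alpha+1$ and $\rho\ge 2$. Its size is then $f(n,r)+\rho-1-\alpha$, which is below $f(n,r)$ exactly when $\alpha\ge\rho$. The hypothesis guarantees this, since it gives $\alpha\ge r\ge\rho$.
\end{itemize}
Similarly, at $t=\alpha$ itself, a choice $y\ge 1$ ties or beats $f(n,r)$ as soon as $\rho>\alpha$. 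Without this analysis, the claim that only $K_{n-\alpha,\alpha}$ and the $t=1$ graphs survive is unproved, and with it the equality characterizations in (ii)--(iv).

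A smaller gap is at the start. You take the bipartition with $S\subseteq X$, but Theorem~\ref{thm1} only makes $S$ independent. An independent set in a bipartite graph may meet both colour classes; then $T=N(S)$ does too, and $e=t(s+x)+xy$ is not the right saturated count. The repair is the mediant argument behind Lemma~\ref{le4}. The sets $N(S\cap X)\subseteq Y$ and $N(S\cap Y)\subseteq X$ are disjoint, so one nonempty part among $S\cap X$ and $S\cap Y$ already has neighbourhood ratio at most $t/s<1/r$, and you may use it in place of $S$.

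Your opening claim that $R\neq\emptyset$ is also false in general: take $S$ to be the whole larger side of $K_{p,q}$. Your case split does cover $R=\emptyset$ anyway.
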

Let $\rho'=\rho\left(K_{n-\big\lfloor\frac{n-1}{r+1}\big\rfloor,
\big\lfloor\frac{n-1}{r+1}\big\rfloor}\right)$ and
$\rho''=\rho\left(D\left(\big\lceil\frac{n-r-1}{2}\big\rceil,r+1;1,
\big\lfloor\frac{n-r-1}{2}\big\rfloor-1\right)\right).$
\begin{thm}\label{thm7}
Let $r\geq1$ and $n\geq r^2+r+2$ be two integers, and let $G\in \mathcal{B}_{n,r}.$ Then\\
(i) If $r=1$ then
$\rho(G)\leq\rho'$
with equality if and
only if $G\cong
K_{n-\big\lfloor\frac{n-1}{2}\big\rfloor,
\big\lfloor\frac{n-1}{2}\big\rfloor}.$\\
(ii) If $r\geq2$ and $\rho'>\rho'',$ then $\rho(G)\leq \rho'$ with equality if and
only if $G\cong K_{n-\big\lfloor\frac{n-1}{r+1}\big\rfloor,
\big\lfloor\frac{n-1}{r+1}\big\rfloor}.$\\
(iii) If $r\geq2$ and $\rho'<\rho'',$ then $\rho(G)\leq \rho''$ with equality if and
only if $G\cong
D\left(\big\lceil\frac{n-r-1}{2}\big\rceil,r+1;1,\big\lfloor\frac{n-r-1}{2}\big\rfloor-1\right).$\\
(iv) If $r\geq2$ and $\rho'=\rho'',$ then $\rho(G)\leq \rho'$ with equality if and
only if $G\in\left\{ K_{n-\big\lfloor\frac{n-1}{r+1}\big\rfloor,
\big\lfloor\frac{n-1}{r+1}\big\rfloor},
D\left(\big\lceil\frac{n-r-1}{2}\big\rceil,r+1;\big\lfloor\frac{n-r-1}{2}\big\rfloor-1\right)\right\}.$
\end{thm}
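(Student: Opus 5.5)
Let $G\in\mathcal{B}_{n,r}$ have maximum spectral radius, with a fixed bipartition $(X,Y)$. We first extract a canonical structure from the binding-number condition, then reduce to a two-parameter family of double nested graphs, and finally optimize over that family.

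\textbf{Structure from the binding condition.} Since $b(G)<\frac{1}{r}<1$, Theorem~\ref{thm1} shows that a binding set $S$ is independent. Put $T=N_G(S)$ and $t=|T|$. Then $|S|>rt$, so $|S|\ge rt+1$, and $t\ge 1$ because $G$ has no isolated vertices. Write $S_X=S\cap X$, $S_Y=S\cap Y$, $T_X=T\cap X$, $T_Y=T\cap Y$; note $S\cap T=\emptyset$. We add every edge between $X$ and $Y$ that does not change $N(S)$:
\begin{itemize}
\item all edges $S_X$--$T_Y$ and $S_Y$--$T_X$;
\item all edges between $X\setminus S_X$ and $Y\setminus S_Y$.
\end{itemize}
The result is still bipartite, still satisfies $|N(S)|/|S|<1/r$ with $N(S)\ne V$, and has no isolated vertices. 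Adding edges strictly increases $\rho$ (by Perron--Frobenius, on the component containing them). Hence, by maximality, $G$ already contains all these edges, and $G$ is determined by the four numbers $|S_X|,|S_Y|,|T_X|,|T_Y|$ together with $|X|,|Y|$.

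\textbf{Reduction to $S$ on one side.} Next we show that we may take $S_Y=\emptyset$. If both $S_X$ and $S_Y$ are nonempty, we move the vertices of $S_Y$ to the $X$-side and attach them to $T_Y$. The choice is made so that the new set $S'=S_X\cup S_Y$ satisfies $|N(S')|/|S'|\le |T|/|S|$. We then compare spectral radii with a Kelmans-type or eigenvector-shifting argument: using the Perron vector $\mathbf{x}$, relocate the neighbourhood of the vertex with the smaller $\mathbf{x}$-weight onto the larger one, which does not decrease the Rayleigh quotient. Once $S\subseteq X$ and $T\subseteq Y$, the graph is
\[
D(|X|-s,\,s;\;t,\,|Y|-t),\qquad s\ge rt+1 .
\]
If $t=|Y|$, this degenerates to $K_{a,b}$ with $a\ge rb+1$. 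Its spectral radius is $\sqrt{ab}$, which is maximized at $b=\lfloor\frac{n-1}{r+1}\rfloor$ and gives $\rho'$.

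\textbf{Optimizing the double nested family (main obstacle).} The hard part is optimizing $\rho$ over $D(p,s;t,q)$ with $p+s+t+q=n$ and $s\ge rt+1$. We take $\rho$ as the largest root of the characteristic polynomial of the $4\times 4$ equitable quotient matrix, which reduces to a quartic (equivalently a quadratic in $\rho^2$) in $p,s,t,q$. We then show the following monotonicity properties:
\begin{enumerate}
\item[(a)] Decreasing $s$ to exactly $rt+1$ and moving the spare vertices into $X\setminus S$ increases $\rho$.
\item[(b)] If $q\ge1$, replacing $t$ by $1$ increases $\rho$. Concretely, we compare $D(p,rt+1;t,q)$ with $D(p',r+1;1,q')$ by evaluating the polynomial of one graph at the spectral radius of the other. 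The hypothesis $n\ge r^2+r+2$ is used here to control the sign.
\item[(c)] Balancing $p$ and $t+q$ gives $p\in\{\lceil\frac{n-r-1}{2}\rceil,\lfloor\frac{n-r-1}{2}\rfloor\}$. Again by the polynomial comparison, the orientation $D(\lceil\frac{n-r-1}{2}\rceil,r+1;1,\lfloor\frac{n-r-1}{2}\rfloor-1)$ strictly wins, which explains why only one $D$-graph appears in Theorem~\ref{thm7}.
\end{enumerate}
This leaves exactly the two candidates, with spectral radii $\rho'$ and $\rho''$.

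\textbf{Conclusion.} Items (ii)--(iv) of Theorem~\ref{thm7} follow by comparing $\rho'$ with $\rho''$. For $r=1$, an explicit comparison shows $\rho'>\rho''$: the graph $K_{\lceil\frac{n+1}{2}\rceil,\lfloor\frac{n-1}{2}\rfloor}$ is nearly balanced, and $\rho''^2$ can be bounded above by $\lfloor\frac{n-1}{2}\rfloor\lceil\frac{n+1}{2}\rceil$ minus a positive term. This gives (i). The equality cases are tracked through the strict inequalities in each step.
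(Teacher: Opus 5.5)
Your step (b) is false, and the whole reduction rests on it. It is not true that for $q\ge 1$ every $D(p,rt+1;t,q)$ is beaten by some graph with $t=1$. This fails whether or not you keep the bipartition fixed.

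The quotient matrix of $D(a,b;c,d)$ gives $\rho^2$ as the largest root of $y^2-(a(c+d)+bc)\,y+abcd$. Two counterexamples:
\begin{itemize}
\item For $r=1$ and $n=9$, take $D(1,4;3,1)$, so $t=3$, $s=4=rt+1$, $q=1$. It lies in $\mathcal{B}_{9,1}$, since the four vertices of $X_2$ have only three neighbours. Its spectral radius satisfies $\rho^2=8+2\sqrt{13}\approx 15.2$. Every $t=1$ graph $D(p,2;1,q')$ with $p+q'=6$ has at most $14$ edges, so $\rho^2\le 14$ by Lemma \ref{le7}.
\item For $r=2$ and $n=12$, take $D(1,7;3,1)$. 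It has $\rho^2=(25+\sqrt{541})/2\approx 24.1$. Every $D(p,3;1,q')$ with $p+q'=8$ has at most $g(12,2)=23$ edges.
\end{itemize}
Both examples satisfy $n\ge r^2+r+2$, so no sign control from that hypothesis can rescue (b). These graphs are beaten only by the complete bipartite candidate, with $\rho'^2=20$ and $27$ respectively. So ``this leaves exactly the two candidates'' does not follow from your (a)--(c).

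What is missing is the case split on the part sizes $P=|X|\ge Q=|Y|$ that the paper carries out in Lemmas \ref{le6} and \ref{le8}:
\begin{itemize}
\item If $P\ge rQ+1$, the graph is a subgraph of $K_{P,Q}\in\mathcal{B}_{n,r}$. The $r=1$ example is in this regime.
\item If $r(Q-1)+2\le P\le rQ$, the best graph in the class has $t=Q-1$, not $t=1$: it is $D(P-r(Q-1)-1,r(Q-1)+1;Q-1,1)$. It must be compared with $K_{n-\alpha,\alpha}$, where $\alpha=\lfloor\frac{n-1}{r+1}\rfloor$, via $e\le PQ-r(Q-1)-1<f(n,r)$ and $\rho\le\sqrt{e}$. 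This is one place where $n\ge r^2+r+2$ is used. The $r=2$ example is in this regime.
\item Only when $P\le r(Q-1)+1$ does $t=1$ win. Even there the comparison is delicate: it needs Lemma \ref{le7.0} together with the polynomial estimates in Case 1 of Lemma \ref{le8}(iii). When the binding set lies in the smaller part, it also needs Claims \ref{claim16} and \ref{claim17}.
\end{itemize}
After this split, your step (c) is essentially Lemma \ref{le12}.

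A lesser point concerns your reduction to $S$ on one side. Moving $S_Y$ across the bipartition onto $T_Y$ trades $|S_Y||T_X|$ edges for $|S_Y||T_Y|$. So a fixed direction need not help, and the direction would have to be chosen from Perron weights, which you leave vague. The move is in any case unnecessary. Since $N(S\cap X)\subseteq Y$ and $N(S\cap Y)\subseteq X$ are disjoint, $|N(S)|/|S|$ is the mediant of the two ratios. Hence $S\cap X$ or $S\cap Y$ is itself a binding set (Lemma \ref{le4}), and no graph modification is needed.
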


\section{Proofs of Theorems \ref{thm4} and \ref{thm5}}
Before proving our main theorems, we present several auxiliary lemmas.

\begin{lem}[Brouwer and Haemers \cite{Brouwer2011}, Godsil and Royle
\cite{Godsil2001}, Haemers \cite{Haemers1995}]\label{le1}
Let $M$ be a real symmetric matrix and $R(M)$ its quotient matrix. The eigenvalues
of every quotient matrix of $M$ interlace those of $M$. If $R(M)$ is equitable,
then the eigenvalues of $R(M)$ are
exactly the eigenvalues of $M$. Furthermore, if $M$ is nonnegative and irreducible,
then the spectral radius of $R(M)$ equals to the spectral radius of $M$.
\end{lem}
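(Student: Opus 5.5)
The statement is the standard interlacing result for quotient matrices, so I would prove it directly from the Courant--Fischer theorem and the Perron--Frobenius theorem rather than cite it.

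\textbf{Setting up the quotient matrix.} Let $M$ be an $n\times n$ real symmetric matrix and $V_1\cup\cdots\cup V_m$ a partition of its index set. Let $P$ be the $n\times m$ characteristic matrix, whose $j$-th column is the indicator vector of $V_j$. Put $D=P^{T}P=\mathrm{diag}(|V_1|,\ldots,|V_m|)$. The quotient matrix is then $R(M)=D^{-1}P^{T}MP$, the matrix of average row sums of the blocks.

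\textbf{Interlacing.} Set $S=PD^{-1/2}$, so that $S^{T}S=I_m$. Then
$$D^{1/2}R(M)D^{-1/2}=D^{-1/2}P^{T}MPD^{-1/2}=S^{T}MS.$$
Hence $R(M)$ is similar to the symmetric matrix $S^{T}MS$; in particular its eigenvalues are real and equal to those of $S^{T}MS$. For a matrix $S$ with orthonormal columns, Cauchy interlacing gives
$$\lambda_i(M)\ge\lambda_i(S^{T}MS)\ge\lambda_{n-m+i}(M)\qquad (1\le i\le m).$$
I would prove this from Courant--Fischer. For $\lambda_i(S^{T}MS)$, take the max--min over $i$-dimensional subspaces $U\subseteq\mathbb{R}^m$. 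The map $x\mapsto Sx$ is an isometry, so $SU$ is an $i$-dimensional subspace of $\mathbb{R}^n$ with the same Rayleigh quotients. Taking the maximum over the larger family of all $i$-dimensional subspaces of $\mathbb{R}^n$ yields the upper bound. The lower bound follows by applying the same argument to $-M$.

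\textbf{Equitable case.} If the partition is equitable, then every row of $M$ in block $V_i$ has the same sum over the columns in $V_j$. This says exactly that $MP=PR(M)$. Now let $R(M)x=\theta x$ with $x\neq0$. Then
$$M(Px)=PR(M)x=\theta Px,$$
and $Px\neq0$ because $P$ has full column rank. So every eigenvalue of $R(M)$ is an eigenvalue of $M$, which is how I read ``exactly'' in the statement. Moreover, a basis of eigenvectors of $R(M)$ lifts to linearly independent eigenvectors of $M$, so multiplicities are respected as a sub-multiset.

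\textbf{Spectral radius.} Suppose further that $M$ is nonnegative and irreducible. Then $R(M)$ is nonnegative, and it is irreducible: an edge of the digraph of $M$ between $V_i$ and $V_j$ produces a positive entry $R(M)_{ij}$, so the digraph of $R(M)$ is a quotient of a strongly connected digraph. By Perron--Frobenius, $R(M)$ has an eigenvector $y>0$ for the eigenvalue $\rho(R(M))$. By the equitable step, $Py$ is an eigenvector of $M$ for the same eigenvalue, and $Py$ is entrywise positive. For an irreducible nonnegative matrix, the only eigenvalue having a positive eigenvector is $\rho(M)$. Therefore $\rho(R(M))=\rho(M)$.

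\textbf{Main obstacle.} The only delicate point is this final step: it needs both the irreducibility of $R(M)$ and the uniqueness of the positive eigenvector. If the irreducibility of $R(M)$ were awkward to establish, I would avoid it. Perron--Frobenius for an arbitrary nonnegative matrix still gives a nonnegative nonzero eigenvector $y$ of $R(M)$ for $\rho(R(M))$, and then $Py$ is a nonnegative nonzero eigenvector of $M$. For irreducible $M$, any such vector must be the Perron vector, which again forces $\rho(R(M))=\rho(M)$.
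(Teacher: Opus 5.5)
Your proof is correct. The paper gives no proof of its own: it only cites Brouwer--Haemers, Godsil--Royle and Haemers. Your argument is essentially the standard one found there. It uses Cauchy interlacing for $S^{T}MS$ with $S=PD^{-1/2}$, lifts eigenvectors through $MP=PR(M)$, and lifts the Perron vector of $R(M)$ to a positive eigenvector of $M$.

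You also read the statement correctly on two points. ``Exactly'' can only mean that the spectrum of $R(M)$ is a sub-multiset of that of $M$. And the spectral-radius claim needs the partition to be equitable, as you assumed; without that, interlacing only gives $\rho(R(M))\le\rho(M)$.
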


\begin{lem}[Godsil and Royle \cite{Godsil2001}] \label{le2}
If $H$ is a spanning subgraph of a connected graph $G,$ then
$\rho(H)\leq\rho(G)$
with equality if and only if $H\cong G.$
\end{lem}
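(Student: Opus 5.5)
The plan is to compare the two adjacency matrices through the Rayleigh quotient, and then use the Perron--Frobenius theorem for the connected graph $G$ to settle the equality case. Since $H$ is a spanning subgraph of $G$, both graphs have the same vertex set, and $A(G)-A(H)$ is a nonnegative symmetric $0$-$1$ matrix. Its nonzero entries correspond exactly to the edges in $E(G)\setminus E(H)$.

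\textbf{The inequality.} Let $\mathbf{x}$ be a unit eigenvector of $A(H)$ for $\rho(H)$. By the Perron--Frobenius theorem for nonnegative matrices, we may take $\mathbf{x}\geq 0$ entrywise; this holds even if $H$ is disconnected. Then
\[
\rho(H)=\mathbf{x}^{T}A(H)\mathbf{x}\leq \mathbf{x}^{T}A(G)\mathbf{x}\leq \max_{\|\mathbf{y}\|=1}\mathbf{y}^{T}A(G)\mathbf{y}=\rho(G).
\]
The first inequality holds because
\[
\mathbf{x}^{T}\bigl(A(G)-A(H)\bigr)\mathbf{x}=2\sum_{uv\in E(G)\setminus E(H)}x_ux_v\geq 0 .
\]
The second is the Rayleigh principle for the real symmetric matrix $A(G)$.

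\textbf{The equality case.} Suppose $\rho(H)=\rho(G)$. Then both inequalities above are equalities. Equality in the Rayleigh principle forces $\mathbf{x}$ to be an eigenvector of $A(G)$ for $\rho(G)$. Since $G$ is connected, $A(G)$ is irreducible, so Perron--Frobenius gives that the eigenspace of $\rho(G)$ is one-dimensional and spanned by a strictly positive vector. Because $\mathbf{x}\geq 0$ and $\mathbf{x}\neq 0$, it follows that $\mathbf{x}>0$ entrywise. Equality in the first inequality then gives $\sum_{uv\in E(G)\setminus E(H)}x_ux_v=0$ with every term positive. Hence $E(G)\setminus E(H)=\emptyset$, so $H=G$. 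The converse direction is trivial.

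The only delicate step is this positivity argument. We cannot start from a positive Perron vector of $H$, since $H$ need not be connected. The positivity must instead be transferred from $G$, via the fact that the extremal vector $\mathbf{x}$ of the Rayleigh quotient is forced to be the Perron vector of $G$.
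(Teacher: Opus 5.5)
Your proof is correct and complete. The Rayleigh bound $\rho(H)=\mathbf{x}^{T}A(H)\mathbf{x}\le \mathbf{x}^{T}A(G)\mathbf{x}\le \rho(G)$ gives the inequality. In the equality case, $\mathbf{x}$ is forced into the one-dimensional $\rho(G)$-eigenspace of the irreducible matrix $A(G)$, so it is strictly positive, and this rules out every edge of $E(G)\setminus E(H)$. Since $H$ is spanning, $H\cong G$ is equivalent to $E(H)=E(G)$ (the edge counts agree), so proving $H=G$ loses nothing.

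The paper gives no proof of its own; it cites Godsil and Royle. There the statement is the monotonicity part of the Perron--Frobenius theory for irreducible nonnegative matrices: $0\le A_1\le A$ entrywise implies $\rho(A_1)\le\rho(A)$, with equality iff $A_1=A$. That version needs no symmetry. The general argument replaces the Rayleigh principle by the entrywise estimate $|\theta|\,|\mathbf{z}|\le A_1|\mathbf{z}|\le A|\mathbf{z}|$ for an eigenpair $(\theta,\mathbf{z})$ of $A_1$, combined with subinvariance of the Perron root. Its endgame is the same as yours: equality forces $|\mathbf{z}|$ to be a Perron vector of $A$, hence positive, and then $(A-A_1)|\mathbf{z}|=0$ gives $A_1=A$. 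Your version uses the symmetry of adjacency matrices, which is all the paper needs, and is shorter.

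Your insistence that positivity be transferred from $G$ rather than taken from $H$ matches how the lemma is actually used. In the connectivity claims in the proofs of Theorem~\ref{thm5} and Lemma~\ref{le8} it is applied with $H$ disconnected. Elsewhere, e.g.\ $K_{p,q-r-1}\subset D(1,p-1;q-r-1,r+1)$, it is applied to non-spanning subgraphs. Your argument covers these after padding $H$ with isolated vertices, which does not change the spectral radius.
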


\begin{lem}\label{le3}
$b(K_1\vee(K_{n-r-2}\cup(r+1)K_1))<\frac{1}{r}$.
\end{lem}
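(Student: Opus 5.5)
It suffices to exhibit a single witness set $S$ with $N_G(S)\neq V(G)$ and $|N_G(S)|/|S|<\frac{1}{r}$, since $b(G)$ is the minimum of this ratio over all admissible $S$. Write $G=K_1\vee(K_{n-r-2}\cup(r+1)K_1)$. Let $u$ be the vertex of the $K_1$ that is joined to everything. Let $T=\{v_1,\dots,v_{r+1}\}$ be the $r+1$ vertices of the copy of $(r+1)K_1$. By construction each $v_i$ has degree exactly $1$ in $G$, with $N_G(v_i)=\{u\}$.

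The natural choice is $S=T$. First, $N_G(S)=\bigcup_{i=1}^{r+1}N_G(v_i)=\{u\}$. Since $n\geq r+13\geq 3$, the graph has more than one vertex, so $N_G(S)=\{u\}\neq V(G)$. Thus $S$ is an admissible nonempty set in the definition of $b(G)$. Second,
$$b(G)\leq \frac{|N_G(S)|}{|S|}=\frac{1}{r+1}<\frac{1}{r},$$
which is the claim. As a sanity check, $G$ has no isolated vertices: $u$ is adjacent to all other vertices, and the clique $K_{n-r-2}$ is nonempty because $n-r-2\geq 11$. Hence $G\in\mathcal{G}_{n,r}$, which is what Theorems \ref{thm4} and \ref{thm5} need.

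There is no real obstacle here. The only point that needs care is checking the side condition $N_G(S)\neq V(G)$ in the definition of $b(G)$, and this is immediate because $N_G(S)$ is a single vertex.
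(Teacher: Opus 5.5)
Your proof is correct and is the paper's argument: take $S=V((r+1)K_1)$, note $N_G(S)=\{u\}$, and conclude $b(G)\leq \frac{1}{r+1}<\frac{1}{r}$. Your explicit check of $N_G(S)\neq V(G)$ does not actually need $n\geq r+13$, since $S$ already contains $r+1\geq 2$ vertices outside $N_G(S)=\{u\}$.
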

\begin{proof}
Let $G=K_1\vee(K_{n-r-2}\cup(r+1)K_1).$ The vertex set of $G$ can be partitioned as $V(G)=V(K_1)\cup V(K_{n-r-2})\cup V((r+1)K_1).$ Take $S=V((r+1)K_1).$  Then $b(G)\leq \frac{|N_G(S)|}{|S|}=\frac{1}{r+1}<\frac{1}{r}.$
\end{proof}

Now we are ready to provide the proof of Theorem \ref{thm4}.

\medskip
\noindent \textbf{Proof of Theorem \ref{thm4}.}
Let $G^*_e$ be the graph with the maximum size in $\mathcal{G}_{n,r}.$ Note that $b(G^*_e)<\frac{1}{r}$ for integer $r\geq1.$ Let $S$ be a binding set. Then $b(G^*_e)=\frac{|N_{G^*_e}(S)|}{|S|}<\frac{1}{r}$ and $N_{G^*_e}(S)\neq V(G^*_e).$ By Theorem \ref{thm1}, $S$ is an independent set, which implies that
$S\cap N_{G^*_e}(S)=\emptyset.$ Define $T_1=N_{G^*_e}(S)$ and $T_2=V(G^*_e)\setminus (S\cup N_{G^*_e}(S)).$ Let $|T_i| =t_i$ for $i=1,2$ and $|S|=s.$ Then we have
\begin{eqnarray}\label{eq1}
s\geq rt_1+1.
\end{eqnarray}
By Lemma \ref{le3}, $K_1\vee(K_{n-r-2}\cup(r+1)K_1)\in \mathcal{G}_{n,r},$ and hence
\begin{eqnarray}\label{equation2}
e(G^*_e)\geq e(K_1\vee(K_{n-r-2}\cup(r+1)K_1))={n-r-1\choose 2}+r+1.
\end{eqnarray}

\begin{claim}\label{claim1}
$G^*_e[T_1\cup T_2]\cong K_{t_1+t_2}$ and $G^*_e[T_1\cup S] \cong K_{t_1}\vee sK_1.$
\end{claim}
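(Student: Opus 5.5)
The plan is to use the maximality of $e(G^*_e)$: adding edges that keep the graph in $\mathcal{G}_{n,r}$ would contradict it. So we show that every missing edge inside $T_1\cup T_2$, and every missing edge between $T_1$ and $S$, can be added without leaving the family. The key observation is that adding such edges does not change the set $N(S)$. We then use $S$ itself as a witness that the binding number stays below $\frac{1}{r}$.

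First fix $S$, $T_1=N_{G^*_e}(S)$ and $T_2$ as above. Let $G'$ be obtained from $G^*_e$ by adding all missing edges with both ends in $T_1\cup T_2$ and all missing edges between $T_1$ and $S$. In $G'$ the set $S$ is still independent. Its neighbourhood is still exactly $T_1$:
\begin{itemize}
\item every added edge at a vertex of $S$ goes to $T_1$;
\item no edge between $S$ and $T_2$ is added.
\end{itemize}
Hence $N_{G'}(S)=T_1$. Since $S\neq\emptyset$, we have $N_{G'}(S)=T_1\neq V(G')$. So $S$ is admissible in the definition of $b(G')$, and
$$b(G')\leq \frac{|T_1|}{|S|}=\frac{t_1}{s}<\frac{1}{r}.$$

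Second, $G'$ has no isolated vertices, because it is a supergraph of $G^*_e$ on the same vertex set. Therefore $G'\in\mathcal{G}_{n,r}$ and $e(G')\geq e(G^*_e)$. By the maximality of $G^*_e$, equality holds, so no edges were actually added. This means $G^*_e[T_1\cup T_2]$ is complete. It also means every vertex of $S$ is adjacent to every vertex of $T_1$, while $S$ is independent by Theorem \ref{thm1}. Together these give $G^*_e[T_1\cup S]\cong K_{t_1}\vee sK_1$.

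The only point needing care is checking that the witness $S$ stays valid in $G'$, namely that $N_{G'}(S)\neq V(G')$ and that its neighbourhood does not grow. This is immediate because no edge between $S$ and $T_2$ is added and $S$ remains independent. I expect no real obstacle here; the claim is a routine edge-saturation argument.
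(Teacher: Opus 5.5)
Your proposal is correct and is essentially the paper's edge-saturation argument: $S$ stays a witness with $N(S)=T_1$, so $b<\frac{1}{r}$ is preserved, and maximality of $e(G^*_e)$ forbids any missing edge. The only difference is cosmetic: you add all missing edges at once instead of one edge at a time by contradiction. You also state explicitly two points the paper leaves implicit, namely that $N_{G'}(S)\neq V(G')$ because $S\cap T_1=\emptyset$, and that $T_1$ is a clique by the first part.
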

\begin{proof}
If $|T_1\cup T_2|=1,$ then the result follows. Next we consider $|T_1\cup T_2|\geq 2.$ Suppose to the contrary that  $G^*_e[T_1\cup T_2]\ncong K_{t_1+t_2}.$ Then there exist two vertices $u, v\in T_1\cup T_2$ such that $uv\notin E(G^*_e).$ Let $G'= G^*_e+uv.$ Then $b(G')\leq \frac{|N_{G'}(S)|}{|S|}=\frac{|N_{G^*_e}(S)|}{|S|}<\frac{1}{r},$ which implies that $G'\in \mathcal{G}_{n,r}.$ However, $e(G')>e(G^*_e),$ which contradicts the maximality of $G^*_e.$ So $G^*_e[T_1\cup T_2]\cong K_{t_1+t_2}.$ Similarly, $G^*_e[T_1\cup S] \cong K_{t_1}\vee sK_1.$
\end{proof}
By Claim \ref{claim1}, we have $G^*_e \cong K_{t_1}\vee(K_{n-t_1-s}\cup sK_1).$

\begin{claim}\label{claim2}
$t_2\geq2.$
\end{claim}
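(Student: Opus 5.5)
Since $S$ is independent and $G^*_e$ has no isolated vertices, $t_1\geq1$, and Claim~\ref{claim1} gives $G^*_e\cong K_{t_1}\vee(K_{t_2}\cup sK_1)$ with $n=t_1+t_2+s$ and $s\geq rt_1+1$ by \eqref{eq1}. I will argue by contradiction, assuming $t_2\in\{0,1\}$, and show that then $e(G^*_e)$ falls strictly below the lower bound \eqref{equation2}.

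First I reduce both cases to one family. If $t_2=0$, then $G^*_e\cong K_{t_1}\vee sK_1$ with $s=n-t_1$. If $t_2=1$, the single vertex of $T_2$ is adjacent only to $T_1$, so $G^*_e\cong K_{t_1}\vee (s+1)K_1$ with $s+1=n-t_1$. In both cases
$$e(G^*_e)={t_1\choose 2}+t_1(n-t_1)=h(t_1),\qquad h(x):=xn-\frac{x(x+1)}{2}.$$
Also $n-t_1\geq s\geq rt_1+1$, so $t_1\leq \frac{n-1}{r+1}$. The function $h$ is increasing for $x\leq n-\frac12$. Therefore
$$e(G^*_e)\leq h\Big(\frac{n-1}{r+1}\Big)=\frac{(n-1)n}{r+1}-\frac{(n-1)(n+r)}{2(r+1)^2}.$$

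It remains to prove
$$h\Big(\frac{n-1}{r+1}\Big)<{n-r-1\choose 2}+r+1\qquad\text{for } n\geq r+13,$$
which contradicts \eqref{equation2}. This inequality is the main obstacle, since it must hold uniformly in $r$ near the threshold.

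For $r=1$ the inequality reads $(n-1)(3n-1)/8<(n-2)(n-3)/2+2$. Clearing denominators, it is equivalent to $n^2-16n+39>0$, which holds for $n\geq14$. This matches the hypothesis $n\geq r+13$.

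For $r\geq2$, the left side is at most $\frac{n(n-1)}{r+1}-\frac{(n-1)(n+r)}{2(r+1)^2}$. The right side equals $\frac{(n-r-1)(n-r-2)}{2}+r+1$. I would set $n=r+1+m$ with $m\geq12$ and multiply through by $2(r+1)^2$. This turns the claim into a polynomial inequality in $m$ and $r$. Its leading part $(r+1)^2m^2$ on the right dominates $(2r+1)m^2$ on the left, because $(r+1)^2-(2r+1)=r^2\geq4$. I would then check that the lower-order terms are absorbed once $m\geq12$. If this crude estimate is tight somewhere, I would use instead that $t_1$ is an integer, replacing $\frac{n-1}{r+1}$ by $\lfloor\frac{n-1}{r+1}\rfloor$.

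This contradiction shows $t_2\geq2$.
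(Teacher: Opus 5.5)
Your proof is correct. For $t_2=0$ it is the paper's argument: the paper's $\binom n2-\binom s2$ with $s\ge\frac{nr+1}{r+1}$ is exactly your $h(t_1)$ with $t_1\le\frac{n-1}{r+1}$.

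You also go further than the paper by actually checking the final inequality, and the check is cleaner than you feared. With $n=r+1+m$ and after multiplying by $2(r+1)^2$, it becomes $r^2m^2-(5r^2+5r+2)m+4r^2+5r+2>0$. For $m\ge 12$ one has $r^2m-(5r^2+5r+2)\ge(7r+2)(r-1)\ge 0$, and the constant term is positive. So the inequality holds for all $r\ge1$ at once, and you need neither a separate $r=1$ case nor the fallback to integrality of $t_1$.

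The genuine difference is the case $t_2=1$. The paper does not count edges there. It notes that the vertex of $T_2$ has a nonempty neighbourhood contained in $T_1$. Hence $S\cup T_2$ has neighbourhood $T_1$ and ratio $\frac{t_1}{s+1}<\frac{t_1}{s}=b(G^*_e)$, contradicting that $S$ is a binding set.

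You instead absorb this case into the same count via $G^*_e\cong K_{t_1}\vee(s+1)K_1$, which gives one uniform computation for both cases. The paper's argument is purely structural and uses no bound on $n$. It also does not depend on the quantity being maximized, which is why it is reused verbatim in Claim~\ref{claim6}.
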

\begin{proof}
By contradiction, assume that $t_2\leq1.$ If $t_2=0,$ then $n=s+t_1.$ By (\ref{eq1}), we have $s\geq\big\lceil\frac{nr+1}{r+1}\big\rceil\geq\frac{nr+1}{r+1}.$
Note that $G^*_e\cong K_{n-s}\vee sK_1.$ Then we have
\begin{eqnarray*}\label{eq2}
e(G^*_e)&=&e(K_{n-s}\vee sK_1)
=\frac{n(n-1)}{2}-\frac{s(s-1)}{2}\\
&\leq&\frac{n(n-1)}{2}-\frac{nr+1}{2(r+1)}\left(\frac{nr+1}{r+1}-1\right)\\
&<&{n-r-1\choose 2}+r+1,
\end{eqnarray*}
which contradicts (\ref{equation2}). Next we consider $t_2=1.$ Let $S'=S\cup T_2.$ Note that $G^*_e$ is connected. Then $N_{G^*_e}(S')=
N_{G^*_e}(S)=T_1,$ and hence $b(G^*_e)\leq\frac{|N_{G^*_e}(S')|}{|S'|}=\frac{t_1}{s+1}<\frac{t_1}{s}=b(G^*_e),$
a contradiction. Hence $t_2\geq2.$
\end{proof}

\begin{claim}\label{claim3}
$s=rt_1+1.$
\end{claim}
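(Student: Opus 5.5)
We already have $s\geq rt_1+1$ from (\ref{eq1}), so it suffices to rule out $s\geq rt_1+2$. By Claim \ref{claim1}, $G^*_e\cong K_{t_1}\vee(K_{t_2}\cup sK_1)$, and by Claim \ref{claim2}, $t_2\geq 2$. Suppose to the contrary that $s\geq rt_1+2$. The idea is to move one vertex from $S$ into the clique part, which strictly increases the number of edges while keeping the binding number below $\frac{1}{r}$.

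Pick a vertex $w\in S$ and form $G'$ from $G^*_e$ by joining $w$ to every vertex of $T_2$. In $G'$, the set $T_2\cup\{w\}$ is a clique of order $t_2+1$, and $G'\cong K_{t_1}\vee(K_{t_2+1}\cup (s-1)K_1)$. We check that $G'$ is a member of $\mathcal{G}_{n,r}$.
\begin{itemize}
\item[(a)] $G'$ has no isolated vertices. Every vertex of $S\setminus\{w\}$ is adjacent to $T_1$, provided $t_1\geq 1$. If $t_1=0$, then vertices of $S$ would be isolated in $G^*_e$, which is excluded. So $t_1\geq 1$.
\item[(b)] $G'$ still has small binding number. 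Take $S''=S\setminus\{w\}$. Then $N_{G'}(S'')=T_1\neq V(G')$ and $|S''|=s-1\geq rt_1+1$. Hence $b(G')\leq \frac{t_1}{s-1}\leq\frac{t_1}{rt_1+1}<\frac{1}{r}$.
\end{itemize}

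Next compare edge counts. We have
$$e(G')-e(G^*_e)=t_2\geq 2>0,$$
since the edges from $w$ to $T_1$ are unchanged and we add $t_2$ new edges. This contradicts the maximality of $G^*_e$. Therefore $s\leq rt_1+1$, and combined with (\ref{eq1}) this gives $s=rt_1+1$.

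The only delicate point is confirming that $G'$ is a member of $\mathcal{G}_{n,r}$: we need $N_{G'}(S'')\neq V(G')$, which holds because $t_2\geq 2$ makes $T_2$ nonempty, and we need $t_1\geq 1$ to rule out isolated vertices. Both follow from the earlier claims and the definition of $\mathcal{G}_{n,r}$. In fact $t_2\geq 1$ alone would suffice for the edge gain to be positive.
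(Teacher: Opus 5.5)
Your proof is correct and is essentially the paper's argument. The paper compares $G^*_e$ with $K_{t_1}\vee(K_{n-(r+1)t_1-1}\cup(rt_1+1)K_1)$, which contains $G^*_e$ as a proper spanning subgraph when $s\geq rt_1+2$; this moves all excess vertices of $S$ into the clique at once, while you move a single vertex. In both cases a set of at least $rt_1+1$ vertices with neighborhood $T_1$ certifies membership in $\mathcal{G}_{n,r}$, and the edge count strictly increases because $t_2\geq 1$. Your check (a) is automatic, since adding edges to a graph without isolated vertices cannot create any.
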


\begin{proof}
Note that $b(K_{t_1}\vee(K_{n-(r+1)t_1-1}\cup(rt_1+1)K_1))\leq \frac{t_1}{rt_1+1}<\frac{1}{r}.$ Then $K_{t_1}\vee(K_{n-(r+1)t_1-1}\cup(rt_1+1)K_1)\in \mathcal{G}_{n,r}.$ By (\ref{eq1}), $s\geq rt_1+1.$ If $s\geq rt_1+2,$ then $G^*_e$ is a proper subgraph of $K_{t_1}\vee(K_{n-(r+1)t_1-1}\cup(rt_1+1)K_1).$ Hence $e(K_{t_1}\vee(K_{n-(r+1)t_1-1}\cup(rt_1+1)K_1))>e(G^*_e),$ which contradicts the maximality of $G^*_e.$
\end{proof}

By Claim \ref{claim3}, we have $G^*_e \cong K_{t_1}\vee(K_{n-(r+1)t_1-1}\cup(rt_1+1)K_1).$ Note that $T_1\neq\emptyset.$ Then $t_1\geq1.$ Suppose that $t_1\geq2$. By (\ref{eq1}), $s+t_1+t_2=n,$ and Claim \ref{claim2}, we have $2\leq t_1\leq
\big\lfloor\frac{n-3}{r+1}\big\rfloor.$
Define $$f_1(x)=-\left(\frac{r^2}{2}+r\right)x^2+\left(nr-\frac{3r}{2}-1\right)x+n-1.$$
The quadratic function $f_1(x)$ has axis of symmetry $x=\frac{1}{r^2+2r}
(nr-\frac{3r}{2}-1)>\frac{1}{2}\left(1+\big\lfloor\frac{n-3}{r+1}\big\rfloor\right),$ hence $f_1(x)> f_1(1)=\frac{(r+1)
(2n-r-4)}{2}$ for $2\leq x\leq
\big\lfloor\frac{n-3}{r+1}\big\rfloor$. Combining (\ref{equation2}), we have
\begin{eqnarray*}
{n-r-1\choose 2}+r+1&\leq& e(G^*_e)=e(K_{t_1}\vee(K_{n-(r+1)t_1-1}\cup (rt_1+1)K_1))\\
&=&\frac{n(n-1)}{2}-\left[-\bigg(\frac{r^2}{2}+r\bigg)t_1^2+\bigg(nr-\frac{3r}{2}-1\bigg)t_1+n-1\right]\\
&<&\frac{n(n-1)}{2}-\frac{(r+1) (2n-r-4)}{2}\\
&=&{n-r-1\choose 2}+r+1,
\end{eqnarray*}
a contradiction. Hence $t_1=1.$ Then we have $G^*_e \cong K_1\vee(K_{n-r-2}\cup(r+1)K_1).$
\hspace*{\fill}$\Box$

Now we are ready to present the proof of Theorem \ref{thm5}.

\medskip
\noindent \textbf{Proof of Theorem \ref{thm5}.}
Let $G^*_{\rho}$ be the graph with the maximum spectral radius in $\mathcal{G}_{n,r}.$ Note that $b(G^*_{\rho})<\frac{1}{r}$ for integer $r\geq1.$  Let $S$ be a binding set. Then $b(G^*_{\rho})=\frac{|N_{G^*_{\rho}}(S)|}{|S|}<\frac{1}{r}$ and $N_{G^*_{\rho}}(S)\neq V(G^*_{\rho}).$ By Theorem \ref{thm1}, $S$ is an independent set, which implies that $S\cap N_{G^*_{\rho}}(S)=\emptyset.$ Define $T_1=N_{G^*_{\rho}}(S)$ and $T_2=V(G^*_{\rho})\setminus (S\cup N_{G^*_{\rho}}(S)).$
Let $|T_i| =t_i$ for $i=1,2$ and $|S| =s.$ Then
\begin{eqnarray}\label{eq3}
s\geq rt_1+1.
\end{eqnarray}
By Lemma \ref{le3}, $K_1\vee(K_{n-r-2}\cup(r+1)K_1)\in \mathcal{G}_{n,r},$ and hence
\begin{eqnarray}\label{eq4}
\rho (G^*_{\rho})\geq \rho(K_1\vee(K_{n-r-2}\cup(r+1)K_1))>n-r-2.
\end{eqnarray}

\begin{claim}\label{claim4}
$G^*_{\rho}$ is connected.
\end{claim}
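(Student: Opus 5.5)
Suppose, for a contradiction, that $G^*_{\rho}$ is disconnected. The idea is to add one edge between two components without losing membership in $\mathcal{G}_{n,r}$, and then use Lemma \ref{le2} to get a strictly larger spectral radius.

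First I deal with the case where some component $H$ of $G^*_{\rho}$ satisfies $\rho(H)=\rho(G^*_{\rho})$ and contains neither an isolated vertex nor all of $S\cup T_1$. More generally, the argument only needs one edge that can be added safely, so I look for that edge directly.

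\textbf{Choosing the edge.} Since $S$ is independent and $T_1=N_{G^*_{\rho}}(S)$, any edge with both endpoints in $T_1\cup T_2$, or with one endpoint in $S$ and the other in $T_1$, leaves $N(S)$ unchanged. Adding such an edge $uv$ gives $G'=G^*_{\rho}+uv$ with
$$b(G')\leq \frac{|N_{G'}(S)|}{|S|}=\frac{t_1}{s}<\frac{1}{r}.$$
So $G'\in\mathcal{G}_{n,r}$; it has no isolated vertices because it contains $G^*_{\rho}$ as a spanning subgraph. As in Claim \ref{claim1}, maximality of $\rho$ then lets me assume $G^*_{\rho}[T_1\cup T_2]$ is complete and every vertex of $S$ is adjacent to all of $T_1$.

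\textbf{Why a disconnection gives such an edge.} If $T_1\cup T_2$ is complete and $S$ is completely joined to $T_1$, then $G^*_{\rho}$ is connected as long as $T_1\neq\emptyset$. Indeed, $T_1\cup T_2$ forms one clique, and every vertex of $S$ attaches to it through $T_1$. And $T_1\neq\emptyset$ holds, since an empty $T_1$ would mean $S$ consists of isolated vertices, which is excluded.

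So if $G^*_{\rho}$ is disconnected, one of these completing edges is missing. Adding it keeps the graph in $\mathcal{G}_{n,r}$, and in fact it joins two different components.

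\textbf{The spectral comparison.} The one delicate point is that Lemma \ref{le2} is stated for a connected host graph. I will therefore not take $G'$ directly. Instead I add all the missing edges inside $T_1\cup T_2$ and between $S$ and $T_1$ at once, producing $G''=K_{t_1}\vee(K_{t_2}\cup sK_1)$.

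The graph $G''$ is connected, lies in $\mathcal{G}_{n,r}$ by the same neighborhood computation, and contains $G^*_{\rho}$ as a proper spanning subgraph. Lemma \ref{le2} then gives $\rho(G^*_{\rho})<\rho(G'')$, contradicting the choice of $G^*_{\rho}$. Hence $G^*_{\rho}$ is connected.

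I expect no real obstacle here. The only care needed is to check that $N(S)$ is preserved and that $T_1\neq\emptyset$, both of which follow from the absence of isolated vertices and the independence of $S$ given by Theorem \ref{thm1}.
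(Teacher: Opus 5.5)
Your proposal is correct, but it uses a different supergraph from the paper's. The paper takes a component $G_1$ meeting $S$ and a vertex $u\in N_{G_1}(S\cap V(G_1))$. It then adds one edge from $u$ to a vertex of each other component. Since $u\notin S$ by independence, $N(S)$ is unchanged, and Lemma \ref{le2} applied to the resulting connected graph gives the contradiction.

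You instead go straight to $G''=K_{t_1}\vee(K_{t_2}\cup sK_1)$. You check three things:
\begin{itemize}
\item $G^*_{\rho}\subseteq G''$, because $S$ is independent and has no neighbours in $T_2$;
\item $N_{G''}(S)=T_1\neq V(G'')$, so $G''\in\mathcal{G}_{n,r}$;
\item $G''$ is connected, because $t_1\geq 1$.
\end{itemize}
This avoids component bookkeeping. Lemma \ref{le2} only needs the \emph{larger} graph to be connected, so your argument actually proves $G^*_{\rho}=G''$ directly. That settles Claims \ref{claim4} and \ref{claim5} in one step, whereas the paper's Claim \ref{claim5} relies on Claim \ref{claim4} to justify adding single edges.

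You should strip out the scaffolding before the final step.
\begin{itemize}
\item The opening sentence about a component $H$ is never used.
\item The sentence ``maximality of $\rho$ then lets me assume $G^*_{\rho}[T_1\cup T_2]$ is complete\dots'' is unjustified at this stage. For a possibly disconnected graph, adding one edge need not strictly increase $\rho$, and ruling that out is exactly what connectivity is needed for.
\item A single missing ``completing'' edge need not join two different components.
\end{itemize}
None of these is needed. Your final step uses only that $G^*_{\rho}$ is a proper spanning subgraph of the connected graph $G''$, which holds because $G^*_{\rho}$ is assumed disconnected.
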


\begin{proof}
Suppose to the contrary that $G^*_{\rho}$ is disconnected. Let $G_1,$ $G_2,\ldots,$ $G_c$ be the connected components of $G^*_{\rho},$ where $c\geq 2.$ Then there exists a connected component $G_i$ such that $S\cap V(G_i)\neq \emptyset.$ Without loss of generality, assume that $S\cap V(G_1)=S_1\neq \emptyset.$ Note that $G^*_{\rho}$ contains no isolated vertices. Then $N_{G_1}(S_1)\neq \emptyset.$  Construct $G'= G^*_{\rho}+uv_2+uv_3+\cdots+uv_c,$ where $u\in V(N_{G_1}(S_1))$ and $v_i\in V(G_i)$ for $2\leq i\leq c.$ Recall that $S\cap N_{G^*_{\rho}}(S)=\emptyset.$ Then $S_1\cap N_{G_1}(S_1)=\emptyset.$ Therefore, $b(G')\leq \frac{|N_{G'}(S)|}{|S|}=\frac{|N_{G^*_{\rho}}(S)|}{|S|}<\frac{1}{r},$ implying that $G'\in \mathcal{G}_{n,r}.$ Note that $G'$ is connected and $G^*_{\rho}$ is a spanning subgraph of $G'.$ By Lemma \ref{le2}, $\rho(G') >\rho(G^*_{\rho}),$ which contradicts the maximality of $G^*_{\rho}.$
\end{proof}

\begin{claim}\label{claim5}
$G^*_{\rho}[T_1\cup T_2]\cong K_{t_1+t_2}$ and $G^*_{\rho}[T_1\cup S] \cong K_{t_1}\vee sK_1.$
\end{claim}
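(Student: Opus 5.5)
The plan mirrors the proof of Claim \ref{claim1}, with edge-maximality replaced by spectral maximality. The tool that makes this work is Claim \ref{claim4}: $G^*_{\rho}$ is connected, so Lemma \ref{le2} applies to any graph obtained from $G^*_{\rho}$ by adding edges. Since adding an edge to a connected graph keeps it connected, Lemma \ref{le2} then gives that the spectral radius strictly increases.

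For the first assertion, I would argue by contradiction. If $t_1+t_2\le 1$ there is nothing to prove. Otherwise, suppose there are $u,v\in T_1\cup T_2$ with $uv\notin E(G^*_{\rho})$, and set $G'=G^*_{\rho}+uv$. Neither $u$ nor $v$ lies in $S$, so adding $uv$ does not change the neighbourhood of any vertex of $S$; hence $N_{G'}(S)=N_{G^*_{\rho}}(S)=T_1$. Since $S$ is independent (Theorem \ref{thm1}), $S\cap T_1=\emptyset$. We also have $T_1\neq V(G')$, because $S\neq\emptyset$. Therefore
$$b(G')\le \frac{|N_{G'}(S)|}{|S|}=\frac{t_1}{s}<\frac1r.$$
Moreover $G'$ has no isolated vertices, so $G'\in\mathcal{G}_{n,r}$. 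But $G^*_{\rho}$ is a connected proper spanning subgraph of $G'$, so Lemma \ref{le2} gives $\rho(G')>\rho(G^*_{\rho})$, contradicting maximality.

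For the second assertion, I would show that every $x\in S$ is adjacent to every $y\in T_1$. Suppose some pair $x\in S$, $y\in T_1$ has $xy\notin E(G^*_{\rho})$, and set $G'=G^*_{\rho}+xy$. Then $N_{G'}(S)=T_1\cup\{y\}=T_1$, so the neighbourhood of $S$ is again unchanged and the same bound on $b(G')$ holds. Lemma \ref{le2} then yields the same contradiction. Together with the independence of $S$, this shows that $G^*_{\rho}[T_1\cup S]$ consists of a clique on $T_1$ (already established in the first assertion) fully joined to the independent set $S$, that is, $K_{t_1}\vee sK_1$.

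There is no real obstacle. The only points to check are that the added edges never touch $S$ in a way that enlarges $N(S)$ (edges inside $T_1\cup T_2$, or between $S$ and $T_1$, are both safe), and that connectivity from Claim \ref{claim4} is available so that the strict inequality in Lemma \ref{le2} applies.
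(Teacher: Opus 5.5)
Your proof is correct and is essentially the paper's argument. You add a missing edge inside $T_1\cup T_2$ (or between $S$ and $T_1$) and note that $N(S)$ is unchanged, so the new graph stays in $\mathcal{G}_{n,r}$. Connectivity from Claim \ref{claim4} together with Lemma \ref{le2} then gives a strictly larger spectral radius, a contradiction. You also spell out the second assertion, which the paper dismisses with ``similarly'', and the side conditions more carefully than the paper does.
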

\begin{proof}
If $|T_1\cup T_2|=1,$ then the result follows. Next we consider $|T_1\cup T_2|\geq 2.$ Suppose to the contrary that  $G^*_{\rho}[T_1\cup T_2]\ncong K_{t_1+t_2}.$ Then there exist two vertices $u, v\in T_1\cup T_2$ such that $uv\notin E(G^*_{\rho}).$ Let $G''= G^*_{\rho}+uv.$ Then $b(G'')\leq \frac{|N_{G''}(S)|}{|S|}=\frac{|N_{G^*_{\rho}}(S)|}{|S|}<\frac{1}{r},$ which implies that $G''\in \mathcal{G}_{n,r}.$ However, by Claim \ref{claim4} and Lemma \ref{le2}, $\rho(G'')>\rho(G^*_{\rho}),$  which contradicts the maximality of $G^*_{\rho}.$ So $G^*_{\rho}[T_1\cup T_2]\cong K_{t_1+t_2}.$ Similarly, $G^*_{\rho}[T_1\cup S] \cong K_{t_1}\vee sK_1.$
\end{proof}
By Claim \ref{claim5}, we have $G^*_{\rho} \cong K_{t_1}\vee(K_{n-t_1-s}\cup sK_1).$

\begin{claim}\label{claim6}
$t_2\geq2.$
\end{claim}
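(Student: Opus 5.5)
We argue by contradiction and assume $t_2\leq 1$, splitting into the cases $t_2=1$ and $t_2=0$ as in the proof of Claim \ref{claim2}.

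\textbf{Case $t_2=1$.} This case is purely combinatorial and repeats the earlier argument. By Claim \ref{claim4}, $G^*_{\rho}$ is connected, so the unique vertex $w$ of $T_2$ has all its neighbours in $T_1$. Hence $N_{G^*_{\rho}}(S\cup T_2)=T_1$, and $N_{G^*_{\rho}}(S\cup T_2)\neq V(G^*_{\rho})$ because $S$ is nonempty and disjoint from $T_1$. Therefore
$$b(G^*_{\rho})\leq \frac{t_1}{s+1}<\frac{t_1}{s}=b(G^*_{\rho}),$$
which is a contradiction.

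\textbf{Case $t_2=0$.} Here $n=s+t_1$, and Claim \ref{claim5} gives $G^*_{\rho}\cong K_{t_1}\vee sK_1$. From (\ref{eq3}) we get $s\geq rt_1+1$, so
$$t_1\leq \Big\lfloor\frac{n-1}{r+1}\Big\rfloor.$$
We bound $\rho(K_{t_1}\vee sK_1)$ from above and compare it with (\ref{eq4}).

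The partition $\{T_1,S\}$ is equitable, with quotient matrix
$$\begin{pmatrix} t_1-1 & s\\ t_1 & 0\end{pmatrix}.$$
By Lemma \ref{le1}, $\rho(G^*_{\rho})$ is the largest root of $x^2-(t_1-1)x-t_1s=0$, that is,
$$\rho=\frac{t_1-1+\sqrt{(t_1-1)^2+4t_1(n-t_1)}}{2}.$$
This expression is increasing in $t_1$ on the admissible range, since $t_1\leq n/2$. So it suffices to treat $t_1=\lfloor\frac{n-1}{r+1}\rfloor\leq \frac{n-1}{r+1}$.

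We want to show $\rho<n-r-2$. Since $x^2-(t_1-1)x-t_1(n-t_1)$ is increasing for $x$ beyond its larger root, it is enough to check
$$(n-r-2)^2-(t_1-1)(n-r-2)-t_1(n-t_1)>0.$$
For $r\geq2$ we have $t_1\leq (n-1)/3$, and the left side is roughly $n^2(1-\tfrac{2}{r+1})$ plus lower-order terms, which is positive. The case $r=1$ needs care, because then $t_1\approx n/2$ and $\rho\approx n-1$ beats $n-3$; indeed $K_{n/2}\vee \frac n2 K_1$ has large spectral radius.

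For $r=1$ the bound $\rho(G^*_{\rho})\geq n-r-2$ is therefore too weak. Instead we compare directly with $\rho(K_1\vee(K_{n-r-2}\cup(r+1)K_1))$, which lies close to $n-r-2+\varepsilon$, and use that $b(G^*_{\rho})<1$ with $t_2=0$ forces $s\geq t_1+1$. I expect this comparison to reduce to an explicit inequality in $n$ and $r$ valid once $n\geq 2r+15$. This step is the main obstacle: verifying the quadratic inequality for all $r$ and all $n\geq 2r+15$, particularly for small $r$ such as $r=1$, where $K_{t_1}\vee sK_1$ is dense. If the crude bound fails there, I would use a sharper lower bound for the extremal graph's spectral radius obtained from its $3\times 3$ equitable quotient matrix.
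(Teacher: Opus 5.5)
Your case $t_2=1$ is correct and is exactly the paper's argument. For $t_2=0$ your reduction also matches the paper: since $s\ge rt_1+1$, $G^*_{\rho}=K_{t_1}\vee sK_1$ is dominated by $K_{\alpha}\vee(n-\alpha)K_1$ with $\alpha=\lfloor\frac{n-1}{r+1}\rfloor$. Your monotonicity in $t_1$ does the same job as Lemma~\ref{le2}. What remains is to show $f_2(n-r-2)>0$ for $f_2(x)=x^2-(\alpha-1)x-\alpha(n-\alpha)$. Your proposal never proves this inequality. For $r\ge2$ you give only an asymptotic heuristic. For $r=1$ you assert the approach fails and leave an unspecified replacement as ``the main obstacle''. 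So the case $t_2=0$ is not established for any $r$, and this is precisely the step where $n\ge 2r+15$ is used.

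The obstacle at $r=1$ does not exist; it comes from an arithmetic slip. The term $-t_1(n-t_1)$ contributes $+t_1^2$. With $t_1\approx\frac{n}{r+1}$, the leading part of $f_2(n-r-2)$ is therefore $(n-t_1)^2\approx\frac{r^2}{(r+1)^2}n^2$, not $(1-\frac{2}{r+1})n^2$. Likewise $\rho(K_{n/2}\vee\frac{n}{2}K_1)\approx\frac{1+\sqrt5}{4}n$, not $n-1$. The crude bound from (\ref{eq4}) works uniformly in $r$.

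The paper computes
$f_2(n-r-2)=n(n-2r-2\alpha-3)+r^2+(\alpha+3)r+\alpha^2+2\alpha+2$.
For $r=1$ this equals $\frac{n^2-12n+19}{4}$ when $n$ is odd and $\frac{(n-2)(n-8)}{4}$ when $n$ is even, both positive for $n\ge17$.

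For general $r$, one clean way to finish is to view $f_2(n-r-2)=\alpha^2-(2n-r-2)\alpha+(n-r-2)(n-r-1)$ as a function of $\alpha$. Its vertex is at $n-1-\frac r2$, so it is decreasing on $[0,\frac{n-1}{r+1}]$. At $\alpha=a:=\frac{n-1}{r+1}$ it equals $a(r^2a-2r^2-2r-1)+r(r+1)$. Since $a\ge2+\frac{12}{r+1}$, we get $r^2a-2r^2-2r-1\ge\frac{10r^2-3r-1}{r+1}>0$, so this value is positive.

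To read off $\rho<n-r-2$ from the sign, the correct justification is that $n-r-2$ lies to the right of the vertex $\frac{\alpha-1}{2}$. It is not that the quadratic is increasing ``beyond its larger root'', which is what you are trying to prove. This gives $\rho(K_\alpha\vee(n-\alpha)K_1)<n-r-2$, contradicting (\ref{eq4}). No $3\times3$ quotient comparison is needed.
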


\begin{proof}
By contradiction, assume that $t_2\leq1.$ If $t_2=0,$ then $n=s+t_1.$ By (\ref{eq3}), we deduce that $s\geq\left\lceil\frac{nr+1}{r+1}\right\rceil=n-\big\lfloor\frac{n-1}{r+1}\big\rfloor.$ Note that $G^*_{\rho}\cong K_{n-s}\vee sK_1.$ For convenience, let $\alpha=\big\lfloor\frac{n-1}{r+1}\big\rfloor.$ Note that $K_{n-s}\vee sK_1$ is a spanning subgraph of $K_{\alpha}\vee(n-\alpha)K_1.$
By (\ref{eq4}) and Lemma \ref{le2}, we have
\begin{eqnarray}\label{eq5}
n-r-2<\rho(G^*_{\rho})=\rho(K_{n-s}\vee s K_1)\leq\rho(K_{\alpha}\vee(n-\alpha)K_1).
\end{eqnarray}
Notice that $A(K_{\alpha}\vee(n-\alpha)K_1)$ has an equitable quotient matrix
$$R(A(K_{\alpha}\vee(n-\alpha)K_1))=\left[
\begin{array}{cc}
\alpha-1&n-\alpha\\
\alpha&0
\end{array}
\right].
$$
Then the characteristic polynomial of $R(A(K_{\alpha}\vee(n-\alpha)K_1))$ is $f_2(x)=x^2-(\alpha-1)x+\alpha(\alpha-n).$
The axis of symmetry of $f_2(x)$ is $x=\frac{\alpha-1}{2}<n-r-2,$ which implies that $f_2(x)$ is increasing for $x\geq n-r-2.$
By $n\geq2r+15,$ we have
\begin{eqnarray*}
f_2(x)\geq f_2(n-r-2)=n(n-2r-2\alpha-3)+r^2+(\alpha+3)r+{\alpha}^2+2\alpha+2>0.
\end{eqnarray*}
Hence $\rho(K_{\alpha}\vee(n-\alpha)K_1)<n-r-2.$ Combining (\ref{eq5}), we have
\begin{eqnarray*}
n-r-2<\rho(G^*_{\rho})\leq\rho(K_{\alpha}\vee(n-\alpha)K_1)<n-r-2,
\end{eqnarray*}
a contradiction. Next we consider $t_2=1.$ Let $S^*=S\cup T_2.$ By Claim \ref{claim4},  $G^*_{\rho}$ is connected. Then $N_{G^*_{\rho}}(S^*)=N_{G^*_{\rho}}(S)=T_1,$ and hence $b(G^*_{\rho})\leq\frac{|N_{G^*_{\rho}}(S^*)|}{|S^*|}=\frac{t_1}{s+1}<\frac{t_1}{s}=b(G^*_{\rho}),$ a contradiction. Hence $t_2\geq2.$
\end{proof}

\begin{claim}\label{claim7}
$s=rt_1+1.$
\end{claim}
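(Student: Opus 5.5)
We mirror the argument of Claim \ref{claim3}, replacing edge-count maximality by spectral maximality through Lemma \ref{le2}. By (\ref{eq3}) we already have $s\geq rt_1+1$, so it suffices to rule out $s\geq rt_1+2$. By Claims \ref{claim5} and \ref{claim6}, $G^*_{\rho}\cong K_{t_1}\vee(K_{n-t_1-s}\cup sK_1)$ with $t_2=n-t_1-s\geq 2$, and $t_1\geq1$ because $G^*_{\rho}$ has no isolated vertices.

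Suppose $s\geq rt_1+2$. Consider $H=K_{t_1}\vee(K_{n-(r+1)t_1-1}\cup(rt_1+1)K_1)$. Since $s\geq rt_1+2$, we have $n-(r+1)t_1-1\geq t_2+1\geq 3$, so $H$ is well defined. Moreover, $H$ contains no isolated vertices and is connected because $t_1\geq1$.

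Taking $S'$ to be the independent set $(rt_1+1)K_1$ gives $N_H(S')=V(K_{t_1})\neq V(H)$. Hence
$$b(H)\leq \frac{t_1}{rt_1+1}<\frac1r,$$
so $H\in\mathcal{G}_{n,r}$.

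Next we show $G^*_{\rho}$ is a proper spanning subgraph of $H$. Identify the $K_{t_1}$ parts of the two graphs. Map $rt_1+1$ of the $s$ vertices of $sK_1$ onto the independent part of $H$. The remaining $s-rt_1-1\geq1$ vertices of $S$, together with $T_2$, go into the clique $K_{n-(r+1)t_1-1}$. All edges of $G^*_{\rho}$ are preserved under this map. Further, $H$ contains at least one extra edge: any moved vertex of $S$ is now adjacent to the vertices of $T_2$, and $T_2\neq\emptyset$.

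Since $H$ is connected, Lemma \ref{le2} gives $\rho(H)>\rho(G^*_{\rho})$. This contradicts the maximality of $G^*_{\rho}$, so $s=rt_1+1$.

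The only subtle point is verifying that the subgraph embedding is proper and that $H$ is connected, so that the strict inequality in Lemma \ref{le2} applies. Both follow from $t_1\geq1$ and $t_2\geq2$ (Claim \ref{claim6}).
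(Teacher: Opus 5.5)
Your proof is correct and follows the paper's own argument. Like the paper, you compare $G^*_{\rho}$ with $K_{t_1}\vee(K_{n-(r+1)t_1-1}\cup(rt_1+1)K_1)\in\mathcal{G}_{n,r}$, observe that $G^*_{\rho}$ is a proper spanning subgraph of it when $s\geq rt_1+2$, and invoke Lemma \ref{le2}; you also spell out the explicit embedding, why it is proper, and why $H$ is connected, which the paper leaves implicit.
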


\begin{proof}
Note that $b(K_{t_1}\vee(K_{n-(r+1)t_1-1}\cup(rt_1+1)K_1))\leq \frac{t_1}{rt_1+1}<\frac{1}{r}.$ Then $K_{t_1}\vee(K_{n-(r+1)t_1-1}\cup(rt_1+1)K_1)\in \mathcal{G}_{n,r}.$ By (\ref{eq3}), $s\geq rt_1+1.$ If $s\geq rt_1+2,$ then $G^*_{\rho}$ is a spanning subgraph of $K_{t_1}\vee(K_{n-(r+1)t_1-1}\cup(rt_1+1)K_1).$ By Lemma \ref{le2}, $\rho(K_{t_1}\vee(K_{n-(r+1)t_1-1}\cup(rt_1+1)K_1))>\rho(G^*_{\rho}).$ This contradicts the maximality of $G^*_{\rho}.$ Hence $s=rt_1+1.$
\end{proof}
By Claim \ref{claim7}, $G^*_{\rho}\cong K_{t_1}\vee(K_{n-(r+1)t_1-1}\cup(rt_1+1)K_1).$ Combining $s+t_1+t_2=n,$ Claim \ref{claim6} and Claim \ref{claim7}, we deduce that $1\leq t_1\leq\big\lfloor\frac{n-3}{r+1}\big\rfloor.$ Next our goal is to prove that $t_1=1.$ Suppose to the contrary that $2\leq t_1\leq\big\lfloor\frac{n-3}{r+1}\big\rfloor.$ Note that $A(G^*_{\rho})$ has an equitable quotient matrix
$$
R(A(G^*_{\rho}))=\left[
\begin{array}{ccc}
t_1-1&n-(r+1)t_1-1&rt_1+1\\
t_1&n-(r+1)t_1-2&0\\
t_1&0&0
\end{array}
\right].
$$
Then the characteristic polynomial of $R(A(G^*_{\rho}))$ is $$f_3(x)=x^3-(n-rt_1-3)x^2-(rt_1^2-(r-1)t_1+n-2)x-(r^2+r)t_1^3+((n-3)r-1)t_1^2+(n-2)t_1.$$
Let $G'''=K_1\vee(K_{n-r-2}\cup(r+1)K_1).$ Note that $A(G''')$ has an equitable quotient matrix
$$
R(A(G'''))=\left[
\begin{array}{ccc}
0&n-r-2&r+1\\
1&n-r-3&0\\
1&0&0
\end{array}
\right].
$$
Its characteristic polynomial is
$$f_4(x)=x^3-(n-r-3)x^2-(n-1)x+(r+1)(n-r-3).$$ By (\ref{eq4}), $\rho(K_1\vee(K_{n-r-2}\cup(r+1)K_1))>n-r-2.$ Next we prove that $f_3(x)-f_4(x)>0$ for $x\geq n-r-2.$ Let $f_5(x)=rx^2-(rt_1+1)x-(t_1^2+t_1+1)r^2+(nt_1-t_1^2+n-4t_1-4)r+n-t_1-3.$ Then $f_3(x)-f_4(x)=(t_1-1)f_5(x).$ Since $n\geq2r+15,$ the axis of symmetry of $f_5(x)$ is $x=\frac{rt_1+1}{2r}<n-r-2,$ which implies that $f_5(x)$ is increasing for $x\geq
n-r-2$. Then $f_5(x)\geq f_5(n-r-2)=-((r^2+r)t_1^2+(2r+1)t_1-r^3+(2n-3)r^2-(n^2-3n+1)r+1)\geq\frac{n^2r^2-(2r^3+5r^2-r+1)n+r^4+4r^3+4r^2-3r+2}{r+1}>0$ for $x\geq n-r-2.$ Hence $f_3(x)-f_4(x)>0$ for $x\geq n-r-2.$ By Lemma \ref{le1}, we have
\begin{eqnarray*}
\rho(K_1\vee(K_{n-r-2}\cup(r+1)K_1))=\lambda_1(R(A(G'''))>\lambda_1(R(A(G^*_{\rho}))=\rho(G^*_{\rho}),
\end{eqnarray*}
which contradicts (\ref{eq4}). Hence $t_1=1,$ and so $G^*_{\rho}\cong K_1\vee(K_{n-r-2}\cup(r+1)K_1).$
\hspace*{\fill}$\Box$

\section{Proof of Theorem \ref{thm6}}
Before proceeding with the proof, we first state several useful lemmas.

\begin{lem}\label{le4}
Let $G=(X,Y)$ be a bipartite graph, and let $S$ be a binding set of $G.$ If $S\cap X\neq \emptyset$ and $S\cap Y\neq \emptyset,$ then $S\cap X$ and $S\cap Y$ are also binding sets.
\end{lem}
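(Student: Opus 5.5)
Write $S_X=S\cap X$ and $S_Y=S\cap Y$, both nonempty by hypothesis, and $b=b(G)=|N_G(S)|/|S|$. The plan is a mediant (weighted-average) argument resting on the bipartite structure.

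\textbf{Step 1: split the neighbourhood.} In a bipartite graph $N_G(S_X)\subseteq Y$ and $N_G(S_Y)\subseteq X$. Hence $N_G(S)=N_G(S_X)\cup N_G(S_Y)$ is a disjoint union, and
\[
|N_G(S)|=|N_G(S_X)|+|N_G(S_Y)|.
\]

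\textbf{Step 2: the parts are admissible.} For the definition of $b(G)$ we need $N_G(S_X)\neq V(G)$ and $N_G(S_Y)\neq V(G)$. This holds whenever $V(G)$ meets both $X$ and $Y$, since $N_G(S_X)\subseteq Y\subsetneq V(G)$ and likewise $N_G(S_Y)\subseteq X\subsetneq V(G)$. Both sides are nonempty here because $S_X,S_Y\neq\emptyset$.

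\textbf{Step 3: the mediant argument.} By Step 2, the definition of $b(G)$ gives
\[
|N_G(S_X)|\ge b|S_X| \quad\text{and}\quad |N_G(S_Y)|\ge b|S_Y|.
\]
Adding and using Step 1,
\[
|N_G(S)|\ge b(|S_X|+|S_Y|)=b|S|.
\]
Since $|N_G(S)|=b|S|$, equality holds in the sum. Both summand inequalities point the same way, so each must be an equality. Therefore $|N_G(S_X)|/|S_X|=b$ and $|N_G(S_Y)|/|S_Y|=b$, and both $S_X$ and $S_Y$ are binding sets.

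The only step needing care is Step 2, checking that the parts satisfy $N\neq V(G)$; it is immediate from bipartiteness, so I expect no real obstacle.
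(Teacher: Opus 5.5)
Your proof is correct and follows essentially the paper's argument: use bipartiteness to write $|N_G(S)|=|N_G(S_X)|+|N_G(S_Y)|$, bound each ratio below by $b(G)$, and conclude that equality in the sum forces equality in each part. You are slightly more careful than the paper, which uses the disjointness of $N_G(S_X)$ and $N_G(S_Y)$ and the admissibility conditions $N_G(S_X),N_G(S_Y)\neq V(G)$ without comment.
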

\begin{proof}
Let $S_1=S\cap X$ and  $S_2=S\cap Y.$ Since $S$ is a binding set of $G,$ $b(G)=\frac{|N_G(S)|}{|S|}=\frac{|N_G(S_1)|+|N_G(S_2)|}{|S_1|+|S_2|}.$ For short, let $b(G)=b.$ By the definition of binding number, we have $\frac{|N_G(S_1)|}{|S_1|}\geq b$ and $ \frac{|N_G(S_2)|}{|S_2|}\geq b.$ We claim that $\frac{|N_G(S_1)|}{|S_1|}=\frac{|N_G(S_2)|}{|S_2|}=b.$ In fact, if $\frac{|N_G(S_1)|}{|S_1|}>b$ or $ \frac{|N_G(S_2)|}{|S_2|}> b,$ then $|N_G(S_1)|+|N_G(S_2)|> b(|S_1|+|S_2|),$ and hence $$b=b(G)=\frac{|N_G(S_1)|+|N_G(S_2)|}{|S_1|+|S_2|}>b,$$ a contradiction. This implies that $S\cap X$ and $S\cap Y$ are binding sets.
\end{proof}

\begin{lem}\label{le5}
Let $p,q,r$ be three positive integers. Then $b(D(p-r(q-1)-1,r(q-1)+1; q-1,1))< \frac{1}{r},$ $b(D(p-r-1,r+1;1,q-1)<\frac{1}{r}$ and $b(K_{n-\big\lfloor\frac{n-1}{r+1}\big\rfloor, \big\lfloor\frac{n-1}{r+1}\big\rfloor})<\frac{1}{r}.$
\end{lem}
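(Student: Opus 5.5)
The plan is to verify all three inequalities directly from the definition of $b(G)$. For each graph I will exhibit one nonempty set $S$ with $N_G(S)\neq V(G)$ and $|N_G(S)|/|S|<\frac1r$; then $b(G)\le |N_G(S)|/|S|<\frac1r$, exactly as in the proof of Lemma \ref{le3}. The natural choice of $S$ in each case is an independent class whose neighbourhood is as small as possible. Since $S$ will lie inside one side of the bipartition, $S\cap N_G(S)=\emptyset$, so $N_G(S)\neq V(G)$ is automatic. Throughout I assume the parameters are such that all part sizes are nonnegative, as is implicit in the statement.

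\emph{The graph $D(p-r(q-1)-1,\,r(q-1)+1;\,q-1,\,1)$.} Here $h=2$. By the definition of a double nested graph, $X_1$ is joined to $Y_1\cup Y_2$ and $X_2$ is joined only to $Y_1$. Take $S=X_2$, so $|S|=r(q-1)+1$ and $N(S)=Y_1$ with $|Y_1|=q-1$. Then
\[
\frac{|N(S)|}{|S|}=\frac{q-1}{r(q-1)+1}<\frac{q-1}{r(q-1)}=\frac1r
\]
when $q\ge 2$. When $q=1$ the ratio is $0<\frac1r$.

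\emph{The graph $D(p-r-1,\,r+1;\,1,\,q-1)$.} Again $X_2$ is joined only to $Y_1$, which now has size $1$. Taking $S=X_2$ gives
\[
\frac{|N(S)|}{|S|}=\frac{1}{r+1}<\frac1r .
\]

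\emph{The graph $K_{n-\alpha,\alpha}$ with $\alpha=\lfloor\frac{n-1}{r+1}\rfloor$.} Take $S$ to be the part of size $n-\alpha$, so that $N(S)$ is the other part, of size $\alpha$. From $(r+1)\alpha\le n-1$ we get $r\alpha\le n-1-\alpha<n-\alpha$, and therefore
\[
\frac{|N(S)|}{|S|}=\frac{\alpha}{n-\alpha}<\frac1r .
\]

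There is no real obstacle in this argument. The only points needing care are reading off the adjacency structure of $D(\cdot\,;\cdot)$ correctly (that $X_2$ sees only $Y_1$) and the degenerate cases $q=1$ or $\alpha=0$, in which the ratio is $0$ and the bound $b(G)<\frac1r$ still holds trivially.
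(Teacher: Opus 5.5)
Your proposal is correct and follows essentially the same route as the paper: in each case you use the same witness set, namely $X_2$ of size $r(q-1)+1$ or $r+1$, or the larger part of size $n-\lfloor\frac{n-1}{r+1}\rfloor$, and the same ratio estimates. Your explicit check that $N_G(S)\neq V(G)$ and your treatment of the degenerate cases $q=1$ and $\lfloor\frac{n-1}{r+1}\rfloor=0$ are small points the paper leaves implicit.
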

\begin{proof}
Let $G_1=D(p-r(q-1)-1,r(q-1)+1; q-1,1),$ $G_2=D(p-r-1,r+1;1,q-1)$ and $G_3=K_{n-\big\lfloor\frac{n-1}{r+1}\big\rfloor,
\big\lfloor\frac{n-1}{r+1}\big\rfloor}.$ The vertex set of $G_1$ can be partitioned as $V(G_1)=V((p-r(q-1)-1)K_1) \cup V((r(q-1)+1)K_1) \cup  V((q-1)K_1)\cup V(K_1).$ Take $S_1=V(((r(q-1)+1)K_1).$ Then $b(G_1)\leq \frac{|N_{G_1}(S_1)|}{|S_1|}=\frac{q-1}{r(q-1)+1}<\frac{1}{r}.$ The vertex set of $G_2$ can be partitioned as $V(G_2)=V((p-r-1)K_1)\cup V((r+1)K_1)\cup V(K_1)\cup V((q-1)K_1).$ Take $S_2=V((r+1)K_1).$ Then $b(G_2)\leq \frac{|N_{G_2}(S_2)|}{|S_2|}=\frac{1}{r+1}<\frac{1}{r}.$ The vertex set of $G_3$ can be partitioned as $V(G_3)=V((n-\big\lfloor\frac{n-1}{r+1}\big\rfloor)K_1)\cup V((\big\lfloor\frac{n-1}{r+1}\big\rfloor)K_1).$ Take $S_3=V((n-\big\lfloor\frac{n-1}{r+1}\big\rfloor)K_1).$ Then $b(G_3)\leq \frac{|N_{G_3}(S_3)|}{|S_3|}=\frac{\big\lfloor\frac{n-1}{r+1}\big\rfloor}{n-\big\lfloor\frac{n-1}{r+1}\big\rfloor}\leq \frac{n-1}{rn+1}<\frac{1}{r}.$
\end{proof}

Next we prove a technical lemma which is very important to our main result. Let $\mathcal{B}_{p,q,r}$ be the family of $n$-vertex bipartite graphs with the bipartition $(X,Y)$ that contain no isolated vertices and satisfy $b(G)<\frac{1}{r}$ for each $G\in\mathcal{B}_{p,q,r}$, where $r\geq 1$ is an integer. Without loss of generality, assume that $|X|=p\geq|Y|=q.$
\begin{lem}\label{le6}
Let $r\geq1$ be an integer, and let $G\in \mathcal{B}_{p,q,r}.$ Each of the following holds.\\
(i) If $p\geq rq+1,$ then $e(G)\leq pq$ with equality if and only if $G\cong K_{p,q}.$\\
(ii) If $r(q-1)+2\leq p\leq rq,$ then $e(G)\leq pq-r(q-1)-1$ with equality if and only if $G\cong D(p-r(q-1)-1,r(q-1)+1; q-1,1).$\\
(iii) If $p\leq r(q-1)+1,$ then $e(G)\leq pq-(r+1)(q-1)$ with equality if and only if $G\cong D(p-r-1,r+1;1,q-1).$
\end{lem}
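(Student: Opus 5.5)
The plan is to fix a binding set, reduce it to one side of the bipartition, and then count the edges that are forced to be missing. Let $G\in\mathcal{B}_{p,q,r}$ and let $S$ be a binding set of $G$. Since $b(G)<\frac{1}{r}\leq 1$, Theorem \ref{thm1} gives that $S$ is independent. If $S$ meets both $X$ and $Y$, Lemma \ref{le4} lets us replace $S$ by $S\cap X$ or $S\cap Y$. So I assume $S\subseteq X$ or $S\subseteq Y$. Write $s=|S|$ and $t=|N_G(S)|$. Since $G$ has no isolated vertices, $t\geq 1$, and $t/s<1/r$ gives $s\geq rt+1$.

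The key observation is that every vertex of $S$ is non-adjacent to every vertex on the opposite side outside $N_G(S)$. Hence $e(G)\leq pq-s(q-t)$ if $S\subseteq X$, and $e(G)\leq pq-s(p-t)$ if $S\subseteq Y$. The no-isolated-vertex condition adds a constraint. If $t<q$ with $S\subseteq X$, the vertices of $Y\setminus N_G(S)$ need neighbours in $X\setminus S$, so $s\leq p-1$. Symmetrically, if $S\subseteq Y$ and $t<p$, then $s\leq q-1$.

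\textbf{Case $S\subseteq X$.}
\begin{itemize}
\item If $t=q$, then $p\geq s\geq rq+1$; this is only possible under (i), and then $e(G)\leq pq$ with equality only for $K_{p,q}$. Note $K_{p,q}\in\mathcal{B}_{p,q,r}$ when $p\geq rq+1$, by taking $S=X$.
\item If $1\leq t\leq q-1$, the number of missing edges is at least $h(t)=(rt+1)(q-t)$, subject to $rt+1\leq s\leq p-1$. Since $h$ is concave in $t$, its minimum over the feasible interval is at an endpoint. If $t=q-1$ is feasible, i.e. $p\geq r(q-1)+2$, the minimum is $h(q-1)=r(q-1)+1$. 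Otherwise the minimum over the feasible $t$ is $h(1)=(r+1)(q-1)$. One checks $r(q-1)+1\leq (r+1)(q-1)$ for $q\geq 2$, with any left-end alternative dominated.
\end{itemize}
Together these give the three bounds in (i)–(iii).

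\textbf{Case $S\subseteq Y$.} Here $t\leq p$, and $t=p$ is impossible because it would force $q\geq s\geq rp+1>p$. So the number of missing edges is at least $(rt+1)(p-t)$ with $rt+1\leq q-1$. By concavity again, the minimum is at $t=1$, giving $(r+1)(p-1)\geq (r+1)(q-1)$, or at $t=\lfloor (q-2)/r\rfloor$, giving at least roughly $q\,(p-(q-2)/r)$. The step I expect to be the main obstacle is showing that this $S\subseteq Y$ bound never beats the $S\subseteq X$ bound relevant to each range of $p$, and in particular that it is strict, so it produces no extra extremal graphs. For this I would use $p\geq q$ directly: then $p-t\geq q-t$, and since the feasible $t$ satisfy $rt+1\leq q-1$, the expression $(rt+1)(p-t)$ is at least the $X$-side quantity $h(t)$ at the same $t$. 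For $p>q$ this is strict. When $p=q$, the bipartition is symmetric, so the $S\subseteq Y$ case is the mirror image of the $S\subseteq X$ case.

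\textbf{Equality.} In each case, equality forces the following: $s=rt+1$ with the optimal $t$; all edges between $X$ and $Y$ other than those from $S$ to the opposite side outside $N_G(S)$ are present; and the remaining vertices on the side of $S$ are adjacent to everything on the other side. For $t=q-1$ this is exactly $D(p-r(q-1)-1,\,r(q-1)+1;\,q-1,\,1)$, and for $t=1$ it is $D(p-r-1,\,r+1;\,1,\,q-1)$. By Lemma \ref{le5}, both graphs lie in $\mathcal{B}_{p,q,r}$, so the bounds are attained and uniqueness follows.
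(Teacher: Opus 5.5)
Your proposal is correct and follows essentially the paper's route. You reduce the binding set to one side via Lemma \ref{le4}, use $s\ge rt+1$ and the no-isolated-vertex bound $s\le p-1$, and count the forced non-edges $s(q-t)$; the paper gets the same quantity by saturating an extremal graph to $D(p-rt-1,rt+1;t,q-t)$. You then minimize the concave $(rt+1)(q-t)$ over $1\le t\le\lfloor (p-2)/r\rfloor$ and dominate the $S\subseteq Y$ case using $p\ge q$.

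The endpoint comparison you left as ``one checks'' in (iii) is $(rt+1)(q-t)-(r+1)(q-1)=(t-1)\bigl(r(q-t-1)-1\bigr)\ge 0$ for $t\le q-2$. This is an equality at $t=q-2$ when $r=1$, which forces $p=q$, and gives the graph $D(1,q-1;q-2,2)$. That graph is just the mirror image of $D(q-2,2;1,q-1)$, so uniqueness up to isomorphism still holds. The paper's strict inequality at this step overlooks the same tie.
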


\begin{proof}
Let $B^*_e$ be the graph with the maximum size in $\mathcal{B}_{p,q,r}.$ Note that $b(B^*_e)<\frac{1}{r}$ for integer $r\geq1.$ By Lemma \ref{le4}, we can choose a binding set $S$ such that $S\subseteq X$ or $S\subseteq Y.$ Then $b(B^*_e)=\frac{|N_{B^*_e}(S)|}{|S|}<\frac{1}{r}$ and $N_{B^*_e}(S)\neq V(B^*_e).$ Let $|S|=s$ and $|N_{B^*_e}(S)|=t.$ Then we have
\begin{eqnarray}\label{eq6}
s\geq rt+1.
\end{eqnarray}

\begin{claim}\label{claim8}
If $S\subseteq X,$ then $B^*_e[(X/S)\cup Y]\cong K_{p-s,q}$ and $B^*_e[S\cup N_{B^*_e}(S)] \cong K_{s,t}.$
\end{claim}
\begin{proof}
Suppose to the contrary that $B^*_e[(X/S)\cup Y]\ncong K_{p-s,q}.$ Then there exist two vertices $u, v$ such that $uv\notin E(B^*_e),$ where $u\in X/S$ and $v\in Y.$ Let $B'= B^*_e+uv.$ Then $b(B')\leq \frac{|N_{B'}(S)|}{|S|}=\frac{|N_{B^*_e}(S)|}{|S|}<\frac{1}{r},$ which implies that $B'\in \mathcal{B}_{p,q,r}.$ However, $e(B')>e(B^*_e),$ which contradicts the maximality of $B^*_e.$ So $B^*_e[(X/S)\cup Y]\cong K_{p-s,q}.$ Similarly, we can prove that $B^*_e[S\cup N_{B^*_e}(S)]\cong K_{s,t}.$
\end{proof}
Similar to the analysis of Claim \ref{claim8}, one can immediately obtain the following result.
\begin{claim}\label{claim8.0}
If $S\subseteq Y,$ then $B^*_e[X\cup (Y/S)]\cong K_{p,q-s}$ and $B^*_e[S\cup N_{B^*_e}(S)] \cong K_{t,s}.$
\end{claim}

By Claim \ref{claim8} and Claim \ref{claim8.0}, $B_e^*\cong D(p-s,s;t,q-t)$ for $S\subseteq X$ and $B_e^*\cong D(t,p-t;q-s,s)$ for $S\subseteq Y$. Hence $B_e^*$ is connected.

\begin{claim}\label{claim9}
$s=rt+1.$
\end{claim}

\begin{proof}
We first consider $S\subseteq X.$ Note that $b(D(p-rt-1,rt+1;t,q-t))\leq \frac{t}{rt+1}<\frac{1}{r}.$ Then $D(p-rt-1,rt+1;t,q-t)\in \mathcal{B}_{p,q,r}.$ By (\ref{eq6}), $s\geq rt+1.$ If $s\geq rt+2,$ then $B^*_e$ is a proper subgraph of $D(p-rt-1,rt+1;t,q-t).$ It follows that $e(D(p-rt-1,rt+1;t,q-t))>e(B^*_e),$ which contradicts the maximality of $B^*_e.$ Therefore, $s=rt+1.$ By the same analysis, the result holds for $S \subseteq Y.$
\end{proof}
By Claim \ref{claim9}, $B^*_e\cong D(p-rt-1,rt+1;t,q-t)$ for $S\subseteq X$ and $B^*_e\cong D(t,p-t;q-rt-1,rt+1)$ for $S\subseteq Y.$

\begin{claim}\label{claim10}
If $S\subseteq Y$ and $p\leq rq,$ then $t=1.$
\end{claim}
\begin{proof}
Note that $S\subseteq Y.$ Then $s\leq q.$ By Claim \ref{claim9}, we have $t<s\leq q\leq p,$ and hence $t\leq p-1.$ If $s=q,$ then $B^*_e$ is disconnected, a contradiction. So $s\leq q-1.$ Recall that $s=rt+1.$ Then $1\leq t\leq\big\lfloor\frac{q-2}{r}\big\rfloor.$ Note that $b(D(1,p-1;q-r-1,r+1))\leq \frac{1}{r+1}<\frac{1}{r}.$ Then $D(1,p-1;q-r-1,r+1)\in \mathcal{B}_{p,q,r},$ and hence $e(B^*_e)\geq e(D(1,p-1;q-r-1,r+1))=pq-(r+1)(p-1).$ If $1< t\leq\big\lfloor\frac{q-2}{r}\big\rfloor,$ then
\begin{eqnarray*}
pq-(r+1)(p-1)\leq e(B^*_e)
&=&e(D(t, p-t; q-rt-1, rt+1))\\
&=&pq+rt^2+(1-rp)t-p\\
&<&pq-(r+1)(p-1),
\end{eqnarray*}
a contradiction. Hence $t=1.$
\end{proof}

\begin{claim}\label{claim11}
If $S\subseteq X$ and $p\leq rq,$ then $1\leq t \leq \big\lfloor\frac{p-2}{r}\big\rfloor.$
\end{claim}
\begin{proof}
Since $S\subseteq X,$ we have $N_{B^*_e}(S)\subseteq Y,$ and hence $t\leq q.$ By $p\leq rq,$ Claim \ref{claim9} and $s\leq p,$ we have  $1\leq t\leq\big\lfloor\frac{p-1}{r}\big\rfloor.$ If $q=t<s=p,$ then $b(B^*_e)=\frac{|N_{B^*_e}(S)|}{|S|}=\frac{t}{s}=\frac{q}{p}\geq\frac{1}{r},$ a contradiction. If $t\leq q-1<s=p,$ then $B^*_e$ is disconnected, a contradiction. Hence $s\leq p-1.$ Since $s=rt+1,$ we have $1\leq t \leq \big\lfloor\frac{p-2}{r}\big\rfloor.$
\end{proof}

(i) Since $p\geq rq+1,$ $b(K_{p,q})=\frac{q}{p}\leq\frac{q}{rq+1}<\frac{1}{r},$ and hence $K_{p,q}\in \mathcal{B}_{p,q,r}.$ So we have $e(B^*_e)\geq e(K_{p,q})=pq.$ Note that $B^*_e$ is a subgraph of $K_{p,q}.$ Then $B^*_e\cong K_{p,q}.$

(ii) By Lemma \ref{le5}, $D(p-r(q-1)-1,r(q-1)+1; q-1,1)\in \mathcal{B}_{p,q,r}.$ Therefore,
\begin{eqnarray}\label{eq7}
e(B^*_e)\geq e(D(p-r(q-1)-1,r(q-1)+1; q-1,1))=pq-r(q-1)-1.
\end{eqnarray}
Next we will prove that $S\subseteq X.$ If $S\subseteq Y,$  by Claim \ref{claim10} and $B^*_e\cong D(t,p-t;q-rt-1,rt+1),$ we have $$e(B^*_e)= pq-(r+1)(p-1)< pq-r(q-1)-1,$$ which contradicts (\ref{eq7}). Hence $S\subseteq X.$ Since $r(q-1)+2\leq p\leq rq,$ we have $r\geq2$ and $q-1\leq \big\lfloor\frac{p-2}{r}\big\rfloor.$ Moreover, as $r\geq2$ and $p\leq rq,$ we obtain that $q-1\geq \big\lfloor\frac{p-2}{r}\big\rfloor.$ Hence $q-1=\big\lfloor\frac{p-2}{r}\big\rfloor.$ By Claim \ref{claim11}, we have $1\leq t\leq q-1.$ Next we claim that $t=q-1.$ In fact, if $t<q-1,$ then
\begin{eqnarray*}
e(B^*_e)&=&e(D(p-rt-1,rt+1; t,q-t))\\
&=&pq+rt^2+(1-rq)t-q\\
&<&pq-r(q-1)-1,
\end{eqnarray*}
which contradicts (\ref{eq7}). Hence $t=q-1.$ Then we have $B^*_e\cong D(p-r(q-1)-1, r(q-1)+1; q-1,1).$

(iii) By Lemma \ref{le5}, $D(p-r-1,r+1;1,q-1)\in \mathcal{B}_{p,q,r},$ and hence
\begin{eqnarray}\label{eq8}
e(B^*_e)\geq e(D(p-r-1,r+1;1,q-1))=pq-(r+1)(q-1).
\end{eqnarray}
We distinguish our proof into the following two cases.

\vspace{1.5mm}
\noindent\textbf{Case 1.} $S\subseteq X.$
\vspace{1mm}

By $p\leq r(q-1)+1\leq rq$ and Claim \ref{claim11}, we have $1\leq t\leq\big\lfloor\frac{p-2}{r}\big\rfloor\leq q-2.$ Note that $B^*_e\cong D(p-rt-1,rt+1; t,q-t).$ If $1<t\leq q-2,$ then
\begin{eqnarray*}
e(B^*_e)&=&e(D(p-rt-1,rt+1; t,q-t))\\
&=&pq+rt^2+(1-rq)t-q\\
&<&pq-(r+1)(q-1),
\end{eqnarray*}
which contradicts (\ref{eq8}). Hence $t=1.$ Then we have $B^*_e\cong D(p-r-1, r+1; 1,q-1).$

\vspace{1.5mm}
\noindent\textbf{Case 2.} $S\subseteq Y.$
\vspace{1mm}

By $q\leq p\leq r(q-1)+1\leq rq,$ Claim \ref{claim10} and $B^*_e\cong D(t,p-t;q-rt-1,rt+1),$ we have $e(B^*_e)=pq-(r+1)(p-1)\leq pq-(r+1)(q-1).$ If $p>q,$ then $e(B^*_e)=pq-(r+1)(p-1)< pq-(r+1)(q-1),$ which contradicts (\ref{eq8}). Hence $p=q.$ Then $B^*_e\cong D(p-r-1, r+1; 1,q-1).$
\end{proof}

Recall that $f(n,r)=\big\lfloor\frac{n-1}{r+1}\big\rfloor\left(n-\big\lfloor\frac{n-1}{r+1}\big\rfloor\right)$ and $g(n,r)=\big\lfloor\frac{n-r-1}{2}\big\rfloor\big\lceil\frac{n-r-1}{2}\big\rceil+r+1.$

\medskip
\noindent \textbf{Proof of Theorem \ref{thm6}.}
Let $\tilde{B^*_e}=(X,Y)$ be the bipartite graph with the maximum size in $\mathcal{B}_{n,r}.$ Without loss of generality, we assume that $|X|\geq |Y|.$ Let $|Y|=q.$ Then $|X|=n-q$ and $q\leq \big\lfloor\frac{n}{2}\big\rfloor.$

(i) Note that $r=1.$ We claim that $|X|=n-q\geq q+1.$ In fact, if $n-q\leq q,$ then $n=2q,$ which implies that $n$ is even. By Lemma \ref{le6} (iii), we have $\tilde{B^*_e}\cong D(\frac{n}{2}-2,2;1,\frac{n}{2}-1).$ Then $e(\tilde{B^*_e})=e(D(\frac{n}{2}-2,2;1,\frac{n}{2}-1))=\frac{n^2}{4}-n+2.$ According to Lemma \ref{le5},  $K_{n-\big\lfloor\frac{n-1}{2}\big\rfloor,\big\lfloor\frac{n-1}{2}\big\rfloor}\in \mathcal{B}_{n,1}$ and $e(K_{n-\big\lfloor\frac{n-1}{2}\big\rfloor,\big\lfloor\frac{n-1}{2}\big\rfloor})=\left(n-\big\lfloor\frac{n-1}{2}\big\rfloor\right)\big\lfloor\frac{n-1}{2}\big\rfloor.$ Note that $n$ is even. Then $$\left(n-\Big\lfloor\frac{n-1}{2}\Big\rfloor\right)\Big\lfloor\frac{n-1}{2}\Big\rfloor\leq e(\tilde{B^*_e})=\frac{n^2}{4}-n+2<\frac{n^2-4}{4}=\left(n-\Big\lfloor\frac{n-1}{2}\Big\rfloor\right)\Big\lfloor\frac{n-1}{2}\Big\rfloor,$$ a contradiction. Hence $n-q\geq q+1,$ and $q\leq\big\lfloor\frac{n-1}{2}\big\rfloor.$ By Lemma \ref{le6} (i), we have $\tilde{B^*_e}\cong K_{n-q,q}.$ Since $q\leq\big\lfloor\frac{n-1}{2}\big\rfloor,$ $\tilde{B^*_e}\cong K_{n-\big\lfloor\frac{n-1}{2}\big\rfloor,\big\lfloor\frac{n-1}{2}\big\rfloor}.$

(ii) Note that $r\geq2.$ We first prove that $n-q\geq rq+1.$ Suppose to the contrary that $n-q\leq rq.$ If $r(q-1)+2\leq n-q\leq rq,$ then $\big\lceil\frac{n}{r+1}\big\rceil\leq q\leq\big\lfloor\frac{n+r-2}{r+1}\big\rfloor.$ By Lemma \ref{le6} (ii), we have $\tilde{B^*_e}\cong D(n-(r+1)q+r-1,r(q-1)+1; q-1,1).$ Note that $K_{n-\big\lfloor\frac{n-1}{r+1}\big\rfloor, \big\lfloor\frac{n-1}{r+1}\big\rfloor}\in \mathcal{B}_{n,r}$ and $e(K_{n-\big\lfloor\frac{n-1}{r+1}\big\rfloor, \big\lfloor\frac{n-1}{r+1}\big\rfloor})=(n-\big\lfloor\frac{n-1}{r+1}\big\rfloor)\big\lfloor\frac{n-1}{r+1}\big\rfloor.$ Then $e(\tilde{B^*_e})\geq (n-\big\lfloor\frac{n-1}{r+1}\big\rfloor)\big\lfloor\frac{n-1}{r+1}\big\rfloor=f(n,r).$ By $\big\lceil\frac{n}{r+1}\big\rceil\leq q\leq\big\lfloor\frac{n+r-2}{r+1}\big\rfloor$ and $n\geq r^2+r+1,$ we have $$f(n,r)\leq e(\tilde{B^*_e})= (n-q)q-r(q-1)-1\leq\Big\lfloor\frac{n+r-2}{r+1}\Big\rfloor\left(n-r-\Big\lfloor\frac{n+r-2}{r+1}\Big\rfloor\right)+r-1<f(n,r),$$ a contradiction. If $n-q\leq r(q-1)+1,$ then $\big\lfloor\frac{n+r-1}{r+1}\big\rfloor\leq q\leq\lfloor\frac{n}{2}\rfloor.$ By Lemma \ref{le6} (iii), we have $\tilde{B^*_e}\cong D(n-q-r-1,r+1;1,q-1).$ By $f(n,r)>g(n,r)$ and $\big\lfloor\frac{n+r-1}{r+1}\big\rfloor\leq q\leq\lfloor\frac{n}{2}\rfloor,$ we can obtain that
$$f(n,r)\leq e(\tilde{B^*_e})=(n-q)q-(r+1)(q-1)\leq\Big\lfloor\frac{n-r-1}{2}\Big\rfloor\Big\lceil\frac{n-r-1}{2}\Big\rceil+r+1<f(n,r),$$ a contradiction. Hence $n-q\geq rq+1,$ and $1\leq q\leq\big\lfloor\frac{n-1}{r+1}\big\rfloor.$ By Lemma \ref{le6} (i), we have $\tilde{B^*_e}\cong K_{n-q,q}.$ If $q<\big\lfloor\frac{n-1}{r+1}\big\rfloor,$ then $$f(n,r)\leq e(\tilde{B^*_e})=e(K_{n-q,q})=(n-q)q<f(n,r),$$ a contradiction. Therefore, $q=\big\lfloor\frac{n-1}{r+1}\big\rfloor,$ which implies that $\tilde{B^*_e}\cong K_{n-\big\lfloor\frac{n-1}{r+1}\big\rfloor,\big\lfloor\frac{n-1}{r+1}\big\rfloor}.$

(iii) Note that $r\geq2.$ We claim that $n-q\leq r(q-1)+1.$ By contradiction, assume that $n-q\geq r(q-1)+2.$  If $r(q-1)+2\leq n-q\leq rq,$ then $\big\lceil\frac{n}{r+1}\big\rceil\leq q\leq\big\lfloor\frac{n+r-2}{r+1}\big\rfloor.$ Combining Lemma \ref{le6} (ii), we have $\tilde{B^*_e}\cong D(n-(r+1)q+r-1,r(q-1)+1; q-1,1).$ Note that $D\big(\big\lceil\frac{n-r-1}{2}\big\rceil,r+1;1, \big\lfloor\frac{n-r-1}{2}\big\rfloor-1\big)\in \mathcal{B}_{n,r}$ and $e\big(D\big(\big\lceil\frac{n-r-1}{2}\big\rceil,r$ $+1;1, \big\lfloor\frac{n-r-1}{2}\big\rfloor-1\big)\big)=g(n,r).$ Then $e(\tilde{B^*_e})\geq g(n,r).$ By $\big\lceil\frac{n}{r+1}\big\rceil\leq q\leq\big\lfloor\frac{n+r-2}{r+1}\big\rfloor$ and $n\geq r^2+r+1,$ we have $$g(n,r)\leq e(\tilde{B^*_e})= (n-q)q-r(q-1)-1\leq\Big\lfloor\frac{n+r-2}{r+1}\Big\rfloor\left(n-r-\Big\lfloor\frac{n+r-2}{r+1}\Big\rfloor\right)+r-1<g(n,r),$$
a contradiction. If $n-q\geq rq+1,$ then $1\leq q\leq\big\lfloor\frac{n-1}{r+1}\big\rfloor.$ By Lemma \ref{le6} (i), we have $\tilde{B^*_e}\cong K_{n-q,q}.$ By $1\leq q\leq\big\lfloor\frac{n-1}{r+1}\big\rfloor$ and $f(n,r)<g(n,r),$ we have
$$g(n,r)\leq e(\tilde{B^*_e})= (n-q)q\leq\left(n-\Big\lfloor\frac{n-1}{r+1}\Big\rfloor\right)\Big\lfloor\frac{n-1}{r+1}\Big\rfloor=f(n,r)<g(n,r),$$ a contradiction. Hence $n-q\leq r(q-1)+1,$ and $\big\lfloor\frac{n+r-1}{r+1}\big\rfloor\leq q\leq \lfloor\frac{n}{2}\rfloor.$ By Lemma \ref{le6} (iii), we have $\tilde{B^*_e}\cong D(n-q-r-1,r+1;1,q-1).$ If $q\neq\big\lfloor\frac{n-r-1}{2}\big\rfloor$ and $q\neq\big\lceil\frac{n-r-1}{2}\big\rceil,$ then $$g(n,r)\leq e(\tilde{B^*_e})= (n-q)q-(r+1)(q-1)<\Big\lfloor\frac{n-r-1}{2}\Big\rfloor\Big\lceil\frac{n-r-1}{2}\Big\rceil+r+1=g(n,r),$$ a contradiction. Hence we have $q=\big\lfloor\frac{n-r-1}{2}\big\rfloor$ or $q=\big\lceil\frac{n-r-1}{2}\big\rceil,$ which implies that $\tilde{B^*_e}\cong D\Big(\big\lceil\frac{n-r-1}{2}\big\rceil,r+1;1, \big\lfloor\frac{n-r-1}{2}\big\rfloor-1\Big)$ or $\tilde{B^*_e}\cong D\Big(\big\lfloor\frac{n-r-1}{2}\big\rfloor,r+1;1,\big\lceil\frac{n-r-1}{2}\big\rceil-1\Big).$

(iv) Based on the proofs of (ii) and (iii), (iv) directly follows.\hspace*{\fill}$\Box$

\section{Proof of Theorem \ref{thm7}}

Before presenting the proof, we state some necessary lemmas.

\begin{lem}[Bhattacharya et al.\cite{Bhattacharya2008}] \label{le7}
If $G$ is a bipartite graph, then $\rho(G) \leq \sqrt{e(G)}$  with equality holds if and only if $G$ is a complete bipartite graph.
\end{lem}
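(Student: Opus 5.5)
The plan is to use the symmetry of the spectrum of a bipartite graph together with the trace identity. Let $A=A(G)$ have eigenvalues $\lambda_1\geq\cdots\geq\lambda_n$. We have $\sum_{i=1}^n\lambda_i^2=\mathrm{tr}(A^2)=\sum_{v}d_G(v)=2e(G)$. Since $G$ is bipartite, its spectrum is symmetric about $0$: if $(x_X,x_Y)$ is an eigenvector for $\lambda$, then $(x_X,-x_Y)$ is an eigenvector for $-\lambda$. Hence $-\rho(G)=\lambda_n$ is also an eigenvalue. If $e(G)=0$ the inequality is trivial, so assume $e(G)\geq1$; then $\rho(G)>0$, and $\lambda_1$ and $\lambda_n$ are distinct terms of the sum. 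Therefore
$$2\rho(G)^2=\lambda_1^2+\lambda_n^2\leq\sum_{i=1}^n\lambda_i^2=2e(G),$$
which gives $\rho(G)\leq\sqrt{e(G)}$.

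For the equality case, equality forces $\lambda_i=0$ for all $2\leq i\leq n-1$, so $\mathrm{rank}(A)=2$. I will write $A=\begin{pmatrix}0&B\\ B^{T}&0\end{pmatrix}$ with $B$ the $|X|\times|Y|$ biadjacency matrix. Then $\mathrm{rank}(A)=2\,\mathrm{rank}(B)$, so $\mathrm{rank}(B)=1$. A $0$-$1$ matrix of rank one has all of its nonzero rows equal, so $B=\mathbf{1}_{X'}\mathbf{1}_{Y'}^{T}$ for some $X'\subseteq X$ and $Y'\subseteq Y$. This means every vertex of $X'$ is adjacent to every vertex of $Y'$ and there are no other edges. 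In other words, $G$ is a complete bipartite graph $K_{|X'|,|Y'|}$ together with possibly some isolated vertices.

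Conversely, for $K_{a,b}$ the equitable quotient matrix $\begin{pmatrix}0&b\\ a&0\end{pmatrix}$ gives $\rho=\sqrt{ab}=\sqrt{e}$ by Lemma \ref{le1}. Adding isolated vertices changes neither $\rho$ nor $e$.

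The only delicate point is the equality characterization. The statement should be read with isolated vertices ignored, that is, "complete bipartite" meaning complete bipartite on the non-isolated vertices. This is harmless for the later application to $\mathcal{B}_{n,r}$, whose members have no isolated vertices, so the equality case there is exactly $G\cong K_{p,q}$. The step I would check most carefully is the rank-one argument for $0$-$1$ matrices: every nonzero row is a scalar multiple of one fixed $0$-$1$ vector, and since the entries are $0$ or $1$, that scalar must equal $1$.
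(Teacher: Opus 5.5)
Your proof is correct and complete. The paper gives no proof of this lemma; it is cited from Bhattacharya, Friedland and Peled, so there is no in-paper argument to compare yours against. What you wrote is the standard argument.

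The trace identity $\sum_i\lambda_i^2=2e(G)$ together with $\lambda_n=-\lambda_1$ gives the bound. Equality leaves exactly two nonzero eigenvalues, so $\mathrm{rank}(B)=1$. Your observation that a nonzero $0$-$1$ row proportional to a fixed $0$-$1$ row must equal it is the detail that turns rank one into a single complete bipartite block.

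An equivalent version that skips the symmetry of the spectrum uses singular values: $\rho(G)=\sigma_1(B)$ and $\sigma_1(B)^2\le\sum_i\sigma_i(B)^2=\mathrm{tr}(BB^{T})=e(G)$, with equality iff $\mathrm{rank}(B)=1$. Your computation also runs in reverse, so you do not need Lemma \ref{le1} for the converse. If $B$ is an all-ones block, then $\mathrm{rank}(A)=2$, the two nonzero eigenvalues are $\pm\rho(G)$, and $2\rho(G)^2=2e(G)$.

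You are right that the lemma as literally stated is slightly inaccurate. The graph $K_{a,b}\cup K_1$ attains equality without being complete bipartite, so \emph{complete bipartite} has to be read up to isolated vertices. Every edgeless graph also attains equality ($0=0$). That is a degenerate case your rank argument, which presupposes $e(G)\ge 1$, should mention separately.

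None of this affects the paper. In the proof of Theorem \ref{thm7}, the lemma is used only for the inequality $\rho(G)\le\sqrt{e(G)}$ and for the value $\rho(K_{a,b})=\sqrt{ab}$, never for the \emph{only if} direction.
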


Recall that $\mathcal{B}_{p,q,r}$ is the family of $n$-vertex bipartite graphs with the bipartition $(X,Y)$ that contain no isolated vertices and satisfy $b(G)<\frac{1}{r}$ for each $G\in\mathcal{B}_{p,q,r}$, where $r\geq 1$ is an integer and $|X|=p\geq|Y|=q.$

\begin{lem}\label{le7.0}
Let $2\leq t\leq\big\lfloor\frac{p-2}{r}\big\rfloor$ be an integer. If $\big\lfloor\frac{p-2}{r}\big\rfloor\leq q-3,$ then $\rho(D(p-rt-1,rt+1; t,q-t))<\rho(D(p-r-1, r+1; 1, q-1)).$
\end{lem}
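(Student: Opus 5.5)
The plan is to use the equitable partition of $D(p-rt-1,rt+1;t,q-t)$ into its four parts $X_1,X_2,Y_1,Y_2$ and Lemma \ref{le1}. This reduces the comparison of spectral radii to comparing the largest roots of two explicit quadratics in $\lambda=\rho^2$. Write $a=p-rt-1$ and $b=rt+1$, and let the Perron vector take the constant values $x,y,u,w$ on $X_1,X_2,Y_1,Y_2$. The eigen-equations are
\[
\rho x=tu+(q-t)w,\qquad \rho y=tu,\qquad \rho u=ax+by,\qquad \rho w=ax .
\]
Eliminating $x,y$ gives a $2\times 2$ system for $(u,w)$ with matrix $\left[\begin{array}{cc} tp & a(q-t)\\ at & a(q-t)\end{array}\right]$, using $a+b=p$. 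Hence $\rho^2$ is the largest root of
\[
\varphi_t(\lambda)=\lambda^2-\bigl(pq-(rt+1)(q-t)\bigr)\lambda+t(q-t)(rt+1)(p-rt-1).
\]
Setting $t=1$ gives the polynomial $\varphi_1$ for $D(p-r-1,r+1;1,q-1)$. Write $\rho_t=\rho(D(p-rt-1,rt+1;t,q-t))$.

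Next I compute $\varphi_t(\lambda)-\varphi_1(\lambda)=L(\lambda)$, which is linear in $\lambda$. Its slope simplifies to
\[
(rt+1)(q-t)-(r+1)(q-1)=(t-1)\bigl(rq-r(t+1)-1\bigr).
\]
This is strictly positive, because $t\ge 2$ and $t\le\lfloor\frac{p-2}{r}\rfloor\le q-3$ give $t+1\le q-2$. It therefore suffices to show $L(\lambda_0)>0$ at a suitable lower bound $\lambda_0\le\rho_1^2$.

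The natural lower bound comes from the subgraph $K_{p-r-1,q}$ of $D(p-r-1,r+1;1,q-1)$. By Lemma \ref{le2} (or Lemma \ref{le7} with equality for complete bipartite graphs) this gives $\rho_1^2\ge \lambda_0:=(p-r-1)q$. Then $L(\lambda)>0$ for all $\lambda\ge\lambda_0$, so $\varphi_t(\rho_1^2)>\varphi_1(\rho_1^2)=0$.

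To conclude $\rho_t^2<\rho_1^2$, I must also rule out that $\rho_1^2$ lies below the smaller root of $\varphi_t$. For this I check that $\rho_1^2\ge\lambda_0$ is at least the axis of symmetry $\frac12\bigl(pq-(rt+1)(q-t)\bigr)$ of $\varphi_t$. That reduces to $(p-r-1)q\ge pq/2$, which holds since $p\ge q\ge t+3$ and $p\ge rt+2$ make $p$ large relative to $r$; if needed I would refine $\lambda_0$ slightly. With this in place, $\rho_1^2$ exceeds the largest root of $\varphi_t$, and so $\rho_t<\rho_1$.

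The main obstacle is the sign of $L(\lambda_0)$. This amounts to the polynomial inequality
\[
(t-1)\bigl(rq-r(t+1)-1\bigr)(p-r-1)q+t(q-t)(rt+1)(p-rt-1)-(q-1)(r+1)(p-r-1)>0
\]
over the range $2\le t\le\lfloor\frac{p-2}{r}\rfloor\le q-3$. I would first factor out $(t-1)$: both remaining differences vanish at $t=1$. Then I would bound the leftover factor using $p\ge rt+2$ and $q\ge t+3$, arguing by monotonicity or concavity in $t$ and checking the endpoints $t=2$ and $t=\lfloor\frac{p-2}{r}\rfloor$. If the bound $(p-r-1)q$ turns out too weak, I would instead use the sharper bound $\rho_1^2\ge$ the largest root of $\varphi_1$, estimated from its explicit discriminant.
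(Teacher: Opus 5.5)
\textbf{Assessment.} Your route is correct, but it is genuinely different from the paper's.

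\textbf{The paper's route.} It takes Perron vectors $x$ of $D(p-rt-1,rt+1;t,q-t)$ and $y$ of $D(p-r-1,r+1;1,q-1)$. It notes that the second graph arises from the first by an edge switch: add $E_1$ (from $r(t-1)$ vertices of $X_2$ to $Y_2$) and delete $E_2$ (from the remaining $r+1$ vertices of $X_2$ to $t-1$ vertices of $Y_1$). It then evaluates $x^{T}(\rho'-\rho)y=x^{T}(A(G_2)-A(G_1))y$. Assuming $\rho\ge\rho'$ and using $\rho^2>(p-r-1)q$, this reduces to positivity of a cubic $g(t)$, which is handled by a convexity and endpoint analysis of $g'$.

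\textbf{Your route.} You compare the quotient polynomials in $\lambda=\rho^2$; the paper itself uses this device in Lemma \ref{le8}(iii), Case 1, via $g_5-g_4=(t-1)g_8$. The difference $\varphi_t-\varphi_1$ is linear in $\lambda$. Your $\varphi_t$, the slope $(t-1)\bigl(r(q-t-1)-1\bigr)>0$, and the bound $\rho_1^2>(p-r-1)q$ are all right. The axis check also holds, since $p\ge rt+2\ge 2r+2$ gives $(p-r-1)q\ge pq/2$. You can drop that check entirely: $\varphi_1\ge 0$ on $[\rho_1^2,\infty)$ and $L>0$ there, so $\varphi_t>0$ there.

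\textbf{Closing the ``main obstacle.''} The step you leave open closes in one line, and you should not factor out $(t-1)$ first. With $C(t)=t(q-t)(rt+1)(p-rt-1)$, the quotient $(C(t)-C(1))/(t-1)$ can be negative, so factoring only makes the estimate harder. Instead, keep $C(t)\ge 0$ (since $rt\le p-2$) and use $(t-1)\bigl(r(q-t-1)-1\bigr)\ge 2r-1$ (since $t\ge 2$ and $q-t-1\ge 2$):
\[
L(\lambda_0)\ \ge\ (p-r-1)\bigl[(2r-1)q-(r+1)(q-1)\bigr]+C(t)=(p-r-1)\bigl[(r-2)q+r+1\bigr]+C(t)>0 .
\]
This needs $r\ge 2$, which you never state. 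It follows from the standing assumption $p\ge q$ together with $\lfloor\frac{p-2}{r}\rfloor\le q-3$, i.e.\ $q\le p\le r(q-2)+1$, which is impossible for $r=1$.

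\textbf{Comparison.} With this inequality filled in, your argument is a complete proof and shorter than the paper's. It treats every $t$ in the range uniformly, including the endpoint $t=\lfloor\frac{p-2}{r}\rfloor$ for which the paper abandoned the characteristic-polynomial comparison and introduced this Perron-vector lemma. In exchange, the paper's method avoids computing characteristic polynomials and is the standard tool for comparing graphs related by an edge switch. The cost is a more delicate cubic analysis.
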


\begin{proof}
Since $\big\lfloor\frac{p-2}{r}\big\rfloor\leq q-3,$ $p\leq r(q-2)+1.$ Define $G_1=D(p-rt-1,rt+1; t,q-t).$ Then the vertex set of $G_1$ can be partitioned as $V(G_1)=V(X_1)\cup  V(X_2)\cup V(Y_1)\cup V(Y_2),$ where $V(X_1)=\{u_1,u_2,\ldots,u_{p-rt-1}\}$, $V(X_2)=\{v_1,v_2,\ldots,v_{rt+1}\}$, $V(Y_1)=\{w_1,w_2,\ldots,w_t\}$ and $V(Y_2)=\{z_1,z_2,\ldots,z_{q-t}\}$. Let $x$ be the Perron vector of $A(G_1)$, and let $\rho=\rho(G_1)$. By symmetry, $x$ takes the same value (say $x_1$, $x_2$, $x_3$, $x_4$) on the vertices of $V(X_1),$ $V(X_2)$, $V(Y_1)$, $V(Y_2),$ respectively. By $A(G_1)x=\rho x$, we have
\begin{eqnarray*}
&&\rho x_1=tx_3+(q-t)x_4,\\
&&\rho x_2=tx_3,\\
&&\rho x_3=(p-rt-1)x_1+(rt+1)x_2,\\
&&\rho x_4=(p-rt-1)x_1,
\end{eqnarray*}
which leads to
\begin{eqnarray*}
x_2=\frac{t}{\rho}x_3,\ x_4=\left(1-\frac{t(rt+1)}{\rho^2}\right)x_3.
\end{eqnarray*}
Let $G_2=D(p-r-1, r+1; 1, q-1).$ Then the vertex set of $G_2$ can be partitioned as $V(G_2)=V(X_1')\cup  V(X_2')\cup V(Y_1')\cup V(Y_2').$ Let $y$ be the Perron vector of $A(G_2)$, and let $\rho'= \rho(G_2)$. By symmetry, $y$ takes the same value (say $y_1$, $y_2$, $y_3$, $y_4$) on the vertices of $V(X_1'),$ $V(X_2')$, $V(Y_1')$, $V(Y_2'),$ respectively. Then, by
$A(G_2)y=\rho' y$, we have
\begin{eqnarray*}
&&\rho' y_1=y_3+(q-1)y_4,\\
&&\rho' y_2=y_3,\\
&&\rho' y_3=(p-r-1)y_1+(r+1)y_2,\\
&&\rho' y_4=(p-r-1)y_1,
\end{eqnarray*}
which leads to
\begin{eqnarray*}
y_2=\left(1-\frac{(p-r-1) (q-1)}{\rho'^2}\right)y_1,\
y_4=\frac{p-r-1}{\rho'}y_1.
\end{eqnarray*}

Let $E_1=\{ v_iz_j\mid r+2\leq i\leq rt+1,  1\leq j\leq q-t\} $ and $E_2=\{v_iw_j\mid 1\leq i\leq r+1,1\leq j\leq t-1\}.$ Note that $G_2=G_1+E_1-E_2.$ Then
\begin{eqnarray*}
&& x^{T} (\rho'-\rho)y\\
&=& x^{T}(A(G_2)-A(G_1))y\\
&=&\sum_{v_iz_j\in E_1}(x_{v_i}y_{z_j}+x_{z_j}y_{v_i})-\sum_{v_iw_j\in
E_2}(x_{v_i}y_{w_j}+x_{w_j}y_{v_i})\\
&=&r(t-1)(q-t)(x_2y_4+x_4y_1)-(r+1)(t-1)(x_2y_4+x_3y_2)\\
&=&(t-1)\left((r(q-t-1)-1)x_2y_4+r(q-t)x_4y_1-(r+1)x_3y_2\right)\\
&=&(t-1)x_3y_1\left((r(q-t-1)-1)\Big(\frac{t(p-r-1)}{\rho\rho'}+1\Big)+\frac{(r+1)(q-1)(p-r-1)}{\rho'^2}\right.\\
&&\left.-\frac{rt(q-t)(rt+1)}{\rho^2}\right)\triangleq (t-1)x_3y_1(*) .
\end{eqnarray*}
Suppose to the contrary that $\rho\geq\rho'.$ Then we have $\frac{1}{\rho'}\geq\frac{1}{\rho}.$ Note that $\rho'=\rho(D(p-r-1, r+1; 1,q-1))>\rho(K_{p-r-1,q})=\sqrt{(p-r-1)q}.$ Then $\rho>\sqrt{(p-r-1) q}.$ It follows that
\begin{eqnarray*}
(*)&\geq&(r(q-t-1)-1)\left(\frac{t(p-r-1)}{\rho^2}+1\right)+\frac{(r+1)(q-1)(p-r-1)}{\rho^2}-\frac{rt(q-t)(rt+1)}{\rho^2}\\
&\geq&(r(q-t-1)-1)\left(\frac{t(p-r-1)}{\rho^2}+\frac{(p-r-1)q}{\rho^2}\right)+\frac{(r+1)(q-1)(p-r-1)}{\rho^2}\\
&&-\frac{rt(q-t)(rt+1)}{\rho^2}\\
&\triangleq&\frac{g(t)}{\rho^2},
\end{eqnarray*}
where $g(t)=r^2t^3+[(1-q)r^2+(2-p)r]t^2+[r^2-(p+q-2)r-p+1]t+(1-q^2)r^2+(pq^2-q^2-p+2)r-p+1.$ Observe that $g'(t)=3r^2t^2+2[(1-q)r^2+(2-p)r]t+r^2+(2-p-q)r-p+1.$ A direct computation gives $g'(2)=(17-4q)r^2+(10-5p-q)r-p+1<0$ and
\begin{eqnarray*}
g'\left(\frac{p-2}{r}\right)&=&p^2+[(1-2q)r-5]p+r^2 +(3q-2)r+5\\
&\leq& q^2+[(1-2q)r-5]q+r^2+(3q-2)r+5\\
&=&(1-2r)q^2+(4r-5)q+r^2-2r+5\\
&<&0.
\end{eqnarray*}
This implies that $g(t)$ is decreasing for $2\leq t\leq\big\lfloor\frac{p-2}{r}\big\rfloor \leq \frac{p-2}{r}.$ Together with the condition $p\leq r(q-2)+1,$ we deduce that
\begin{eqnarray*}
g(t)&\geq& g\left(\frac{p-2}{r}\right)\\
&=&\frac{(-qr-1)p^2+(q^2r^2+3qr-r+3)p-q^2r^3-q^2r^2+r^3-2qr+ r-2}{r}\\
&\geq& \frac{q^2r^3+r(-4r^2+r+1)q+r(r^2-2r-2)}{r}\\
&>&0.
\end{eqnarray*}
Hence $x^{T} (\rho'-\rho)y>0,$ which implies that $\rho<\rho',$ a contradiction. Hence $\rho(D(p-rt-1,rt+1; t,q-t))<\rho(D(p-r-1, r+1; 1,q-1)).$
\end{proof}

Next we will prove a technical lemma which is crucial to our main result.

\begin{lem}\label{le8}
Let $r\geq1$ be an integer, and let $G\in \mathcal{B}_{p,q,r}.$ Each of the following holds.\\
(i) If $p\geq rq+1,$ then $\rho(G)\leq \sqrt{pq}$ with equality if and only if
$G\cong K_{p,q}.$\\
(ii) If $r(q-1)+2\leq p\leq rq,$ then $\rho(G)\leq \rho(D(p-r(q-1)-1,r(q-1)+1; q-1,1))$ with equality if and only if $G\cong D(p-r(q-1)-1,r(q-1)+1; q-1,1).$\\
(iii) If $p\leq r(q-1)+1,$ then $\rho(G)\leq \rho(D(p-r-1,r+1;1,q-1))$ with equality if and only if $G\cong D(p-r-1,r+1;1,q-1).$
\end{lem}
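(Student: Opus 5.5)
We will mirror the proof of Lemma \ref{le6}, now maximizing the spectral radius. Let $B^*_\rho$ be a graph of maximum spectral radius in $\mathcal{B}_{p,q,r}$. By Lemma \ref{le4} we choose a binding set $S$ with $S\subseteq X$ or $S\subseteq Y$, and put $s=|S|$, $t=|N(S)|$, so that $s\geq rt+1$.

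\textbf{Step 1: connectivity.} We first show that $B^*_\rho$ is connected. If it is not, we add edges between components: joining a vertex of $N(S)$ on the appropriate side to a vertex in each other component, chosen so the graph stays bipartite with bipartition $(X,Y)$. This leaves $N(S)$ unchanged, so the binding-number condition is preserved, and Lemma \ref{le2} then contradicts maximality.

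\textbf{Step 2: saturation.} Next we repeat Claims \ref{claim8}, \ref{claim8.0} and \ref{claim9}. Every $X$--$Y$ edge not incident to $S$ can be added without changing $N(S)$, and Lemma \ref{le2} gives strict increase. So $B^*_\rho\cong D(p-rt-1,rt+1;t,q-t)$ when $S\subseteq X$, and $B^*_\rho\cong D(t,p-t;q-rt-1,rt+1)$ when $S\subseteq Y$. The range restrictions on $t$ are obtained exactly as in Claims \ref{claim10} and \ref{claim11}: connectivity and $b<1/r$ force $s\leq p-1$ (respectively $s\leq q-1$).

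\textbf{Step 3: the three cases.}

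Case (i). Since $B^*_\rho\subseteq K_{p,q}$ and $K_{p,q}\in\mathcal{B}_{p,q,r}$, the claim follows from Lemma \ref{le2}. Alternatively, Lemma \ref{le7} gives $\rho\leq\sqrt{e}\leq\sqrt{pq}$.

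Case (ii). As in Lemma \ref{le6}(ii), if $S\subseteq X$ then $t\leq q-1$. For $t<q-1$ we compare spectral radii with the candidate $D(p-r(q-1)-1,r(q-1)+1;q-1,1)$. For this we use the equitable $4\times4$ quotient matrices (Lemma \ref{le1}): write the characteristic polynomials and show that their difference has constant sign for $x$ above a known lower bound of $\rho$ (e.g. $\sqrt{(p-r(q-1)-1)q}$). The cleaner alternative is the Perron-vector argument of Lemma \ref{le7.0}, moving the edges between the $X_2$-block and $Y_2$.

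If $S\subseteq Y$, Claim \ref{claim10}'s analogue gives $t=1$, so $B^*_\rho\cong D(1,p-1;q-r-1,r+1)$. This must be ruled out by an eigenvalue comparison. Since $p\geq r(q-1)+2$ is large relative to $q$, we expect Lemma \ref{le7} to suffice: $\rho^2<e=pq-(r+1)(p-1)$, which is smaller than $\rho^2$ of the candidate. The candidate's $\rho$ exceeds $\sqrt{pq-r(q-1)-1}$ minus a small amount, so here a direct lower bound on the candidate is needed, e.g. via a subgraph $K_{p-r(q-1)-1,q}$ plus edges, or via the quotient polynomial.

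Case (iii). If $S\subseteq X$, then $t\leq\lfloor (p-2)/r\rfloor\leq q-2$. When $\lfloor (p-2)/r\rfloor\leq q-3$, Lemma \ref{le7.0} directly gives $t=1$. The boundary situation $\lfloor(p-2)/r\rfloor=q-2$ with $t=q-2$ needs a separate quotient-polynomial comparison. If $S\subseteq Y$, we get $D(1,p-1;q-r-1,r+1)$, and we must show its $\rho$ is at most that of $D(p-r-1,r+1;1,q-1)$, with equality only when $p=q$, where the two graphs are isomorphic. Both are obtained from $K_{p,q}$ by deleting a $K_{r+1,\,\cdot}$-type star block: the first deletes $(r+1)(p-1)$ edges, the second $(r+1)(q-1)$. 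We would compare via Perron vectors, writing $x^T(A_2-A_1)y$ for the natural edge switch, as in Lemma \ref{le7.0}.

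\textbf{Main obstacle.} The main obstacle is these spectral comparisons, which no longer follow from edge counts: the $t\geq2$ cases not covered by Lemma \ref{le7.0}, and the $S\subseteq Y$ alternatives. For these we first try Lemma \ref{le7} combined with an explicit lower bound on the candidate's spectral radius from a complete bipartite subgraph. Only if that is too weak do we fall back to comparing quotient characteristic polynomials for $x\geq$ that lower bound.
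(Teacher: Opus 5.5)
\textbf{Verdict: there is a genuine gap.} Your reduction matches the paper's: connectivity, saturation, $s=rt+1$, hence $B^*_\rho\cong D(p-rt-1,rt+1;t,q-t)$ or $D(t,p-t;q-rt-1,rt+1)$, with $s\le p-1$ resp.\ $s\le q-1$. The case analysis that follows, however, has real holes.

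\textbf{(iii), $S\subseteq X$: Lemma \ref{le7.0} does not cover what you need.} Its hypothesis $\lfloor\frac{p-2}{r}\rfloor\le q-3$ is a condition on $p$, not on $t$. When $r(q-2)+2\le p\le r(q-1)+1$, i.e.\ $\lfloor\frac{p-2}{r}\rfloor=q-2$, the lemma is unavailable for every $t$. So the whole range $2\le t\le q-2$ must be excluded, not only $t=q-2$; for instance $t=2$ when $r=2$, $q=5$, $p=8$ is left uncovered by your plan. The paper closes this with the quotient comparison $g_5(x)-g_4(x)=(t-1)g_8(x)$. It proves $g_8>0$ for $x\ge\sqrt{(p-r-1)q}$ for all $2\le t\le\lfloor\frac{p-2}{r}\rfloor-1$, and separately for $t=q-2$. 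It uses Lemma \ref{le7.0} only for the top value $t=\lfloor\frac{p-2}{r}\rfloor\le q-3$. Your use of Lemma \ref{le7.0} for the whole range when $\lfloor\frac{p-2}{r}\rfloor\le q-3$ is legitimate.

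\textbf{(ii), $S\subseteq X$, $t<q-1$: the polynomial computation is not optional.} You announce this comparison but do not carry it out, and the Lemma \ref{le7} route cannot replace it. Take $r=q=3$, $p=9$, $t=1$: then $e(B^*_\rho)=19$, while the best complete-bipartite lower bound for the target $D(2,7;2,1)$ is $p(q-1)=18$. The paper's identity $g_5-g_6=(q-t-1)g_7$, with $g_7>0$ above $\sqrt{p(q-1)}$, is what is needed. Your suggested threshold $\sqrt{(p-r(q-1)-1)q}$ is smaller than $\sqrt{p(q-1)}$ and only makes that sign check harder.

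\textbf{$S\subseteq Y$: the step ``Claim \ref{claim10}'s analogue gives $t=1$'' is unjustified.} Claim \ref{claim10} is an edge count. Its spectral counterpart, Claim \ref{claim16}, needs its own comparison $g_1-g_2=(t-1)g_3$. The paper then uses Claim \ref{claim17}, $g_2-g_4=(r+1)(p-q)(x^2-r)$, to compare $D(1,p-1;q-r-1,r+1)$ with $D(p-r-1,r+1;1,q-1)$.

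Your Lemma \ref{le7} idea can replace both claims, but not with the subgraph you name: $K_{p-r(q-1)-1,q}$ is too weak as soon as $q\ge r+2$. What works is the following.
\begin{itemize}
\item Claim \ref{claim10}'s computation gives $e(t)-e(1)=(t-1)(rt+r+1-rp)\le 0$, because $rt\le q-2$ and $q\le p$. Hence $\rho(B^*_\rho)^2\le e(B^*_\rho)\le pq-(r+1)(p-1)$ for every admissible $t$, with no need to prove $t=1$ first.
\item In (ii), the target contains $K_{p,q-1}$. So its $\rho^2>p(q-1)\ge pq-(r+1)(p-1)$.
\item In (iii) with $p>q$, the target contains $K_{p-r-1,q}$, and $(p-r-1)q-\bigl(pq-(r+1)(p-1)\bigr)=(r+1)(p-1-q)\ge 0$.
\item If $p=q$, swapping the two sides reduces to $S\subseteq X$.
\end{itemize}
With these bounds your $S\subseteq Y$ branch would be shorter than the paper's. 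The $S\subseteq X$ branches, though, still require the explicit quotient-polynomial sign checks described above, and your proposal only announces them.
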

\begin{proof}

Let $B^*_{\rho}$ be the graph with the maximum spectral radius in $\mathcal{B}_{p,q,r}.$ Note that $b(B^*_{\rho})<\frac{1}{r}$ for integer $r\geq 1.$ By Lemma \ref{le4}, we  can select a binding set $S$ satisfying $S\subseteq X$ or $S\subseteq Y.$ Then $b(B^*_{\rho})=\frac{|N_{B^*_{\rho}}(S)|}{|S|} < \frac{1}{r}$ and $N_{B^*_{\rho}}(S)\neq V(B^*_{\rho}).$ Define $|S|=s$ and $|N_{B^*_{\rho}}(S)|=t.$ Then we can obtain that
\begin{eqnarray}\label{eq9}
s\geq rt+1.
\end{eqnarray}

\begin{claim}\label{claim12}
$B^*_{\rho}$ is connected.
\end{claim}
\begin{proof}

Suppose to the contrary that $B^*_{\rho}$ is disconnected. Let $B_1,$ $B_2,\dots ,$ $B_c$ are the connected components of $B^*_{\rho},$ where $B_i=(X_i, Y_i)$ for each $1\leq i\leq c.$ Hence there exists a connected component $B_i$ such that $S\cap V(B_i)\neq \emptyset.$ Without loss of generality, assume that $S\cap V(B_1)=S_1\neq \emptyset.$ We first consider the case $S\subseteq X.$ Note that $B^*_{\rho}$ contains no isolated vertices. Then $N_{B_1}(S_1)\neq \emptyset$ and $|X_i|\geq 1$ for $2\leq i\leq c.$ Define $B'= B^*_{\rho}+uv_2+uv_3+\cdots+uv_c,$ where $u\in V(N_{B_1}(S_1))$ and $v_i\in X_i$ for $2\leq i\leq c.$ One can check that $b(B')\leq \frac{|N_{B'}(S)|}{|S|}=\frac{|N_{B^*_{\rho}}(S)|}{|S|}<\frac{1}{r}.$ Then $B'\in \mathcal{B}_{p,q,r}.$ Note that $B'$ is connected and  $B^*_{\rho}$ is a proper subgraph of $B'.$ By Lemma \ref{le2}, we have $\rho(B')>\rho (B^*_{\rho}),$ which contradicts the maximality of $B^*_{\rho}.$  Hence $B^*_{\rho}$ is connected. From the above analysis, we can still conclude that $B^*_{\rho}$ is connected for $S\subseteq Y.$
\end{proof}

\begin{claim}\label{claim13}
If $S\subseteq X,$ then $B^*_{\rho}[(X/S)\cup Y]\cong K_{p-s,q}$ and $B^*_{\rho}[S\cup N_{B^*_{\rho}}(S)] \cong K_{s,t}.$
\end{claim}
\begin{proof}
Suppose to the contrary that $B^*_{\rho}[(X/S)\cup Y]\ncong K_{p-s,q}.$ Then there exist two vertices $u, v$ such that $uv\notin E(B^*_{\rho}),$ where $u\in X/S$ and $v\in Y.$ Let $B'= B^*_{\rho}+uv.$ Then $b(B')\leq \frac{|N_{B'}(S)|}{|S|}=\frac{|N_{B^*_{\rho}}(S)|}{|S|}<\frac{1}{r},$ which implies that $B'\in \mathcal{B}_{p,q,r}.$ By Claim \ref{claim12} and Lemma \ref{le2}, we have $\rho(B')>\rho (B^*_{\rho}),$ a contradiction. Hence $B^*_{\rho}[(X/S)\cup Y]\cong K_{p-s,q}.$ Similarly, $B^*_{\rho}[S\cup N_{B^*_{\rho}}(S)]\cong K_{s,t}.$
\end{proof}
By a similar argument to Claim \ref{claim13}, we obtain the following result.
\begin{claim}\label{claim13.0}
If $S\subseteq Y,$ then $B^*_{\rho}[X\cup (Y/S)]\cong K_{p,q-s}$ and $B^*_{\rho}[S\cup N_{B^*_{\rho}}(S)] \cong K_{t,s}.$
\end{claim}
By Claim \ref{claim13} and Claim \ref{claim13.0}, $B^*_{\rho}\cong D(p-s,s;t,q-t)$ for $S\subseteq X$ and $B^*_{\rho}\cong D(t,p-t;q-s,s)$ for $S\subseteq Y.$
\begin{claim}\label{claim14}
$s=rt+1.$
\end{claim}
\begin{proof}
We first consider $S\subseteq X.$ Note that $b(D(p-rt-1,rt+1;t,q-t))\leq \frac{t}{rt+1}<\frac{1}{r}.$ Then $D(p-rt-1,rt+1;t,q-t)\in \mathcal{B}_{p,q,r}.$ By (\ref{eq9}), $s\geq rt+1.$ If $s\geq rt+2,$ then $B^*_{\rho}$ is a proper subgraph of $D(p-rt-1,rt+1;t,q-t).$ By Lemma \ref{le2}, $\rho(D(p-rt-1,rt+1;t,q-t))>\rho(B^*_{\rho}),$ which contradicts the maximality of $B^*_{\rho}.$ Hence $s=rt+1.$ Following the same analysis, the result holds for $S \subseteq Y.$
\end{proof}
By Claim \ref{claim14}, $B^*_{\rho}\cong D(p-rt-1,rt+1;t,q-t)$ for $S\subseteq X$ and $B^*_{\rho}\cong D(t,p-t;q-rt-1,rt+1)$ for $S\subseteq Y.$
\begin{claim}\label{claim16}
If $S\subseteq Y$ and $p\leq rq,$ then $t=1.$
\end{claim}
\begin{proof}
Note that $S\subseteq Y.$ Then $s\leq q.$ By Claim \ref{claim14}, we have $t<s\leq q\leq p,$ and hence $t\leq p-1.$ If $s=q,$ then $B^*_{\rho}$ is disconnected, a contradiction. So $s\leq q-1.$ Since $s=rt+1,$ we have $1\leq t\leq\big\lfloor\frac{q-2}{r}\big\rfloor.$ Note that $A(B^*_{\rho})$ has an equitable quotient matrix
$$
R(A(B^*_{\rho}))=\left[
\begin{array}{cccc}
0 &0 &q-rt-1 &rt+1\\
0 &0 &q-rt-1 &0\\
t& p-t & 0 & 0\\
t & 0 & 0 & 0
\end{array}
\right].
$$
The characteristic polynomial of $R(A(B^*_{\rho}))$ is
$$g_1(x)=x^4+[(p-t)(rt+1)-pq]x^2+t(rt+1)(q-rt-1)(p-t).$$
Note that $A(D(1, p-1; q-r-1, r+1))$ has an equitable quotient matrix
$$
R(A(D(1, p-1; q-r-1, r+1)))=\left[
\begin{array}{cccc}
0 &0 &q-r-1 &r+1\\
0 &0 &q-r-1 &0\\
1& p-1 & 0 & 0\\
1 & 0 & 0 & 0
\end{array}
\right].
$$
The characteristic polynomial of $R(A(D(1, p-1; q-r-1, r+1)))$ is $$g_2(x)=x^4+((r+1)(p-1)-pq)x^2+(r+1)(q-r-1)(p-1).$$
Observe that $D(1, p-1; q-r-1,r+1)$ contains $K_{p,q-r-1}$ as a proper subgraph. Combining Lemma \ref{le2}, we have
\begin{eqnarray*}
\rho(D(1, p-1; q-r-1,r+1))>\rho(K_{p,q-r-1})=\sqrt{p(q-r-1)}.
\end{eqnarray*}
Let $g_3(x)=(r(p-t)-(r+1))x^2+r^2t^3-r(pr+q-r-2)t^2+((1-p)r^2+(q-2)(p-1)r-q+1)t+(r+1)(q-r-1)(p-1).$ Then we have $ g_1(x)-g_2(x)=(t-1)g_3(x).$ Suppose that $2\leq t\leq\big\lfloor\frac{q-2}{r}\big\rfloor.$ Then $r(p-t)-(r+1)\geq 0.$ The symmetry axis of $g_3(x)$ is $x=0<\sqrt{p(q-r-1)},$ which implies that $g_3(x)$ is increasing for $x\geq\sqrt{p(q-r-1)}.$ By calculation, we have
\begin{eqnarray*}
g_3(x)&\geq& g_3(\sqrt{p(q-r-1)})\geq\frac{(2r^3+3r^2+3r)(q-1)(r-1)}{r}>0.
\end{eqnarray*}
Then $\lambda_1(R(A(B^*_{\rho}))))<\lambda_1(R(A(D(1, p-1; q-r-1, r+1)))).$ According to Lemma \ref{le1}, we conclude that $\rho(B^*_{\rho})<\rho(D(1, p-1; q-r-1, r+1))$ for $2\leq t\leq\big\lfloor\frac{q-2}{r}\big\rfloor.$
Note that $D(1, p-1; q-r-1, r+1)\in\mathcal{B}_{p,q,r}.$ This contradicts the maximality of $B^*_{\rho}.$ Hence $t=1.$
\end{proof}

\begin{claim}\label{claim15}
If $S\subseteq X$ and $p\leq rq,$ then $1\leq t \leq \big\lfloor\frac{p-2}{r}\big\rfloor.$
\end{claim}
\begin{proof}
Since $S\subseteq X,$ we have $N_{B^*_{\rho}}(S)\subseteq Y,$ and hence $t\leq q.$ By $p\leq rq,$ Claim \ref{claim14} and $s\leq p,$ we have $1\leq t\leq \big\lfloor\frac{p-1}{r}\big\rfloor.$ If $q=t<s=p,$ then $b(B^*_{\rho})=\frac{|N_{B^*_{\rho}}(S)|}{|S|}=\frac{t}{s}=\frac{q}{p}\geq\frac{1}{r},$ a contradiction. If $t\leq q-1<s=p,$ then $B^*_{\rho}$ is disconnected, a contradiction. Hence $s\leq p-1.$ Since $s=rt+1,$ we have $1\leq t \leq \big\lfloor\frac{p-2}{r}\big\rfloor.$
\end{proof}

\begin{claim}\label{claim17}
$\rho(D(p-r-1, r+1; 1,q-1))\geq\rho(D(1, p-1; q-r-1, r+1))$ with equality if and only if $p=q.$
\end{claim}
\begin{proof}
Let $G_1=D(p-r-1, r+1; 1, q-1).$ Then $A(G_1)$ has an equitable quotient matrix
$$
R(A(G_1))=\left[
\begin{array}{cccc}
0 &0 &1 &q-1\\
0 &0 &1 &0\\
p-r-1& r+1 & 0 & 0\\
p-r-1 & 0 & 0 & 0
\end{array}
\right].
$$
The characteristic polynomial of $R(A(G_1))$ is
\begin{eqnarray*}
g_4(x)=x^4+((r+1)(q-1)-pq)x^2+(r+1)(q-1)(p-r-1).
\end{eqnarray*}
Recall that the characteristic polynomial of $R(A(D(1, p-1; q-r-1, r+1)))$ is
$$g_2(x)=x^4+((r+1)(p-1)-pq)x^2+(r+1)(p-1)(q-r-1).$$
Observe that $G_1$ contains $K_{r+1,1}$ as a proper subgraph. By Lemma \ref{le2}, we have $\rho(G_1)>\rho(K_{r+1,1})=\sqrt{r+1}.$ Since $g_2(x)-g_4(x)$ $=(r+1)(x^2-r)(p-q)\geq0$ for $x\geq \sqrt{r+1},$ $\lambda_1(G_1)\geq\lambda_1(R(A(D(1, p-1; q-r-1, r+1))))$ with equality if and only if $p=q.$ By Lemma \ref{le1}, we have $\rho(D(p-r-1, r+1; 1,q-1))\geq\rho(D(1, p-1; q-r-1, r+1))$ with equality if and only if $p=q.$
\end{proof}

(i) Since $p\geq rq+1,$ $K_{p,q}\in \mathcal{B}_{p,q,r}.$ Then $\rho(B^*_{\rho})\geq \rho(K_{p,q})=\sqrt{pq}.$
Note that $B^*_{\rho}$ is a proper subgraph of $K_{p,q}.$ Then $B^*_{\rho}\cong K_{p,q}.$

(ii) Note that $r(q-1)+2\leq p\leq rq.$ Then $r\geq2$ and $p>q.$ Recall that $D(p-r-1, r+1; 1,q-1)\in\mathcal{B}_{p,q,r}.$ If $S\subseteq Y,$ then
$B^*_{\rho}\cong D(t,p-t;q-rt-1,rt+1).$ By Claim \ref{claim16} and Claim \ref{claim17}, we obtain that
$\rho(D(p-r-1, r+1; 1,q-1))>\rho(D(1, p-1; q-r-1, r+1))=\rho(B^*_{\rho}),$ a contradiction. Hence $S\subseteq X$ and $B^*_{\rho}\cong D(p-rt-1,rt+1; t,q-t).$ Since $r(q-1)+2\leq p\leq rq$ and $r\geq2,$ $q-1=\big\lfloor\frac{p-2}{r}\big\rfloor.$ By Claim \ref{claim15}, we have $1\leq t\leq q-1.$ Observe that $A(B^*_{\rho})$ has an equitable matrix
$$
R(A(B^*_{\rho}))=\left[
\begin{array}{cccc}
0 &0 &t &q-t\\
0 &0 &t &0\\
p-rt-1& rt+1 & 0 & 0\\
p-rt-1 & 0 & 0 & 0
\end{array}
\right].
$$
The characteristic polynomial of $R(A(B^*_{\rho}))$ is given by
\begin{eqnarray*}
g_5(x)=x^4+((rt+1)(q-t)-pq)x^2+t(rt+1)(q-t)(p-rt-1).
\end{eqnarray*}
Let $G_2=D(p-r(q-1)-1, r(q-1)+1; q-1,1).$ Note that $A(G_2)$ has an equitable quotient matrix
$$
R(A(G_2))=\left[
\begin{array}{cccc}
0 &0 &q-1 &1\\
0 &0 &q-1 &0\\
p-r(q-1)-1& r(q-1)+1 & 0 & 0\\
p-r(q-1)-1 & 0 & 0 & 0
\end{array}
\right].
$$
The characteristic polynomial of $R(A(G_2))$ is $$g_6(x)=x^4+(r(q-1)-pq+1)x^2+(q-1)(qr-r+1)(p-rq+r-1).$$
Observe that $G_2$ contains $K_{p,q-1}$ as a proper subgraph. By Lemma \ref{le2}, we have
\begin{eqnarray*}
\rho(G_2)>\rho(K_{p,q-1})=\sqrt{p(q-1)}.
\end{eqnarray*}
Let $g_7(x)=(rt-r+1)x^2+(t(rt-r+1)-r(q-1)-1)p-r^2t^3+r(r-2)t^2+(qr^2-(r-1)^2)t+(q^2+1)r^2-2r(r-1)q-2r+1.$ Then $g_5(x)-g_6(x)=(q-t-1)g_7(x).$
The symmetry axis of $g_7(x)$ is $x=0<\sqrt{p(q-1)},$ which implies that $g_7(x)$ is increasing for $x\geq\sqrt{p(q-1)}.$ If $1\leq t\leq q-2,$ then combining $p\leq rq,$ we have
\begin{eqnarray*}
g_7(x)\geq g_7(\sqrt{p(q-1)})\geq (q-1)(qr^2-(p-2)r+p)\geq (q^2+q-2)r>0.
\end{eqnarray*}
Then $\lambda_1(R(A(B^*_{\rho})))<\lambda_1(R(A(G_2))).$ By Lemma \ref{le1}, we have
\begin{eqnarray*}
\rho(B^*_{\rho})<\rho(G_2),
\end{eqnarray*}
a contradiction. Hence $t=q-1,$ so $B^*_{\rho}\cong D(p-r(q-1)-1, r(q-1)+1; q-1,1).$

(iii) By Lemma \ref{le5}, $D(p-r-1,r+1;1,q-1)\in\mathcal{B}_{p,q,r}.$ Then
\begin{eqnarray}\label{eq16}
\rho(B^*_{\rho})\geq \rho(D(p-r-1,r+1;1,q-1)).
\end{eqnarray}
Now we divide our argument into the following two cases.

\vspace{1.5mm}
\noindent\textbf{Case 1.} $S\subseteq X.$
\vspace{1mm}

By $p\leq r(q-1)+1\leq rq$ and Claim \ref{claim15}, we have $1\leq t\leq\big\lfloor\frac{p-2}{r}\big\rfloor\leq q-2.$ Note that $B^*_{\rho}\cong D(p-rt-1,rt+1; t,q-t).$ Recall that the characteristic polynomial of $R(A(B^*_{\rho}))$ is
$$g_5(x)=x^4+((rt+1)(q-t)-pq)x^2+t(rt+1)(q-t)(p-rt-1).$$
Since $D(p-r-1, r+1; 1,q-1)$ contains $K_{p-r-1,q}$ as a proper subgraph, we have
\begin{eqnarray*}
\rho(D(p-r-1, r+1; 1,q-1))>\rho(K_{p-r-1,q})=\sqrt{(p-r-1)q}.
\end{eqnarray*}
Recall that the characteristic polynomial of $R(A(D(p-r-1, r+1; 1, q-1)))$ is
\begin{eqnarray*}
g_4(x)=x^4+((r+1)(q-1)-pq)x^2+(r+1)(q-1)(p-r-1).
\end{eqnarray*}
Let $g_8(x)=((q-t-1)r-1)x^2+(t^3-(t^2+t+1) (q-1))r^2+((t+1)(q-t)-1)(p-2)r+(q-t-1)(p-1).$ Then $g_5(x)-g_4(x)=(t-1)g_8(x).$ The symmetry axis of $g_8(x)$ is $x=0<\sqrt{(p-r-1)q},$ which implies that $g_8(x)$ is increasing with respect to $x\geq\sqrt{(p-r-1)q}$.

Next we claim that $t=1.$ Suppose that $2\leq t\leq\big\lfloor\frac{p-2}{r}\big\rfloor.$ If $2\leq t\leq\big\lfloor\frac{p-2}{r}\big\rfloor-1,$ then
\begin{eqnarray*}
g_8(x)&\geq& g_8(\sqrt{(p-r-1)q})\\
&\geq&\frac{(p-r-1)((-qr-r-1)p +q^2r^2+r^2q+2rq+2r+2)}{r}\\
&\geq&\frac{(p-r-1)(1+(q+1)r^2+2r)}{r}\\
&>&0,
\end{eqnarray*}
implying $\lambda_1(R(A(B^*_{\rho})))<\lambda_1(R(A(D(p-r-1, r+1; 1, q-1)))).$
By Lemma \ref{le1}, we have $\rho(B^*_{\rho})<\rho(D(p-r-1, r+1; 1, q-1)),$ a contradiction. Hence $t=\big\lfloor\frac{p-2}{r}\big\rfloor\leq q-2.$ If $t=\big\lfloor\frac{p-2}{r}\big\rfloor=q-2,$ then $r(q-2)+2\leq p\leq r(q-1)+1,$ and
\begin{eqnarray*}
g_8(x)\geq g_8(\sqrt{(p-r-1)q})\geq (r-1)((q^2-4q+1)r+q-1)>0.
\end{eqnarray*}
Hence $\rho(B^*_{\rho})<\rho(D(p-r-1, r+1; 1, q-1)),$ a contradiction. If $t=\big\lfloor\frac{p-2}{r}\big\rfloor\leq q-3,$ then by Lemma \ref{le7.0}, we have $\rho(B^*_{\rho})<\rho(D(p-r-1, r+1; 1, q-1)),$ a contradiction. Hence $t=1$ and $B^*_{\rho}\cong D(p-r-1, r+1; 1,q-1).$

\vspace{1.5mm}
\noindent\textbf{Case 2.} $S\subseteq Y.$
\vspace{1mm}

Note that $B^*_{\rho}\cong D(t,p-t;q-rt-1,rt+1)$ and $q\leq p\leq r(q-1)+1\leq rq.$ Combining (\ref{eq16}), Claim \ref{claim16} and Claim \ref{claim17}, we have $\rho(D(p-r-1, r+1; 1,q-1))\leq\rho(B^*_{\rho})=\rho (D(1, p-1; q-r-1, r+1))\leq\rho(D(p-r-1, r+1; 1,q-1)).$ Then $p=q$ and $B^*_{\rho}\cong D(p-r-1,r+1; 1,q-1).$
\end{proof}

\begin{lem}\label{le12}
If $n-q\leq r(q-1)+1$ and $r\geq2,$ then $\rho(D(n-q-r-1,r+1;1,q-1))\leq\rho\left(D\left(\big\lceil\frac{n-r-1}{2}\big\rceil,r+1;1,
\big\lfloor\frac{n-r-1}{2}\big\rfloor-1\right)\right)$ with equality if and only if $q=\big\lfloor\frac{n-r-1}{2}\big\rfloor.$
\end{lem}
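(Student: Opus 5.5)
The idea is to compare all the graphs in the lemma through their equitable quotient matrices, as in Claim \ref{claim17}, after rewriting the characteristic polynomial in a form that separates the "balanced" part from a linear correction.

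\textbf{Step 1: reparametrize.} Put $p=n-q$, $a=p-r-1=n-q-r-1$ and $c=q$. Then $a+c=N:=n-r-1$ is fixed and only $q$ varies. By the computation in Claim \ref{claim17}, the equitable quotient matrix of $D(n-q-r-1,r+1;1,q-1)$ has characteristic polynomial
$$g_4(x)=x^4+\big((r+1)(q-1)-pq\big)x^2+(r+1)(q-1)(p-r-1).$$
Using $pq-(r+1)(q-1)=ac+r+1$ and $(r+1)(q-1)(p-r-1)=(r+1)(ac-a)$, this becomes
$$\varphi_q(x)=(x^2-r-1)(x^2-ac)-(r+1)a .$$
By Lemma \ref{le1}, $\rho(D(n-q-r-1,r+1;1,q-1))$ is the largest root of $\varphi_q$.

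\textbf{Step 2: lower bound on the candidate maximizer.} Let $q^*=\lfloor\frac{N}{2}\rfloor$. Then $a^*=\lceil\frac N2\rceil$, $c^*=\lfloor\frac N2\rfloor$, and the target graph is $D(\lceil\frac{n-r-1}{2}\rceil,r+1;1,\lfloor\frac{n-r-1}{2}\rfloor-1)$. This graph contains $K_{a^*,c^*}$ as a proper subgraph, since $X_1$ is joined to all of $Y$. Hence Lemma \ref{le2} gives $\rho^*:=\rho(D(\cdot))>\sqrt{a^*c^*}$.

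\textbf{Step 3: compare polynomials.} For any admissible $q$,
$$\varphi_q(x)-\varphi_{q^*}(x)=(x^2-r-1)(a^*c^*-ac)+(r+1)(a^*-a).$$
I would show this is positive for all $x\ge\rho^*$ whenever $q\ne q^*$. The quadratic in $x^2$ then implies, exactly as in the earlier claims, that $\lambda_1$ of the quotient for $q$ is strictly less than $\rho^*$. Two cases arise.

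\emph{Tie case.} If $N$ is odd and $q=\lceil\frac N2\rceil$, then $ac=a^*c^*$ and $a=a^*-1$. The difference is the constant $r+1>0$, so the strict inequality holds there.

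\emph{General case.} Otherwise, set $d=|a-a^*|\ge1$; the case $d=1$ with $q=q^*+1$ and $N$ odd is the tie case. Since $a+c$ is fixed, $a^*c^*-ac\ge d(d-1)$, and in fact $a^*c^*-ac\ge d^2$ when $N$ is even. If $a<a^*$, both terms are nonnegative and at least one is positive, so we are done. The real work is $a>a^*$, i.e.\ $q<q^*$. Here I need
$$(x^2-r-1)(a^*c^*-ac)>(r+1)(a-a^*)=(r+1)d,$$
and with $x^2>a^*c^*\ge\frac{N^2-1}{4}$ it suffices to have $\big(\frac{N^2-1}{4}-r-1\big)(a^*c^*-ac)>(r+1)d$.

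\textbf{Step 4: the main obstacle.} The inequality in the case $q<q^*$ is where I expect the difficulty, especially for small $d$.

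\emph{Case $d\ge2$.} Then $a^*c^*-ac\ge d(d-1)\ge d$, so it suffices that $\frac{N^2-1}{4}>2(r+1)$.

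\emph{Case $d=1$ with $N$ even.} Then $a^*c^*-ac=1$, so it suffices that $\frac{N^2-1}{4}-r-1>r+1$.

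Both reduce to $N^2>8r+9$, i.e.\ $N$ of order $\sqrt r$. I will get this from the hypothesis $n-q\le r(q-1)+1$ together with $q\le\frac n2$. The hypothesis forces $q\ge\frac{n+r-1}{r+1}$, and the ambient setting has $n\ge r^2+r+2$, which gives $N=n-r-1\ge r^2+1$. That comfortably exceeds $\sqrt{8r+9}$ for $r\ge2$; for $r=2$, $N\ge5$ gives $25>25$ only with care, so I would check $r=2$ separately.

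If the lemma is meant without the bound on $n$, I would instead argue directly from the admissible range of $q$. From $c\ge\frac{N+r}{r+1}$ one gets $c\ge2$, and I would check the few tiny configurations by hand. Finally, equality occurs only when $q=q^*$, since every other $q$ gives strict positivity.
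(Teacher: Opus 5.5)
Your strategy is the paper's. Both arguments compare the characteristic polynomials of the equitable quotient matrices and work beyond the lower bound $\rho^*>\sqrt{a^*c^*}$. Your form $\varphi_q(x)=(x^2-r-1)(x^2-ac)-(r+1)a$ is a cleaner packaging of the paper's $h_3=h_1-h_2$, since $a^*c^*-ac=(\beta-q)(N-\beta-q)$ and $a^*-a=q-\beta$ with $\beta=q^*$.

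\textbf{Gap in the case $q<q^*$.} The only bound you use is $a^*c^*-ac\ge d(d-1)$, upgraded to $d^2$ when $N$ is even. Step 4 then treats ``$d\ge2$'' and ``$d=1$, $N$ even''.
\begin{itemize}
\item The case $d=1$, $N$ odd, $q=q^*-1$ is never handled. There your recorded bound is $0$, so nothing you wrote gives $(x^2-r-1)(a^*c^*-ac)>(r+1)d$.
\item This case does occur under the hypotheses, for example $r=3$, $n=17$, $q=5$ (so $N=13$, $q^*=6$).
\end{itemize}

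\textbf{The repair.} Write $a=a^*+d$ and $c=c^*-d$. Then $a^*c^*-ac=d\,(d+a^*-c^*)\ge d^2\ge d$ for every $q<q^*$, so it suffices that $x^2-r-1>r+1$. This holds because
\[
x^2>a^*c^*=\lfloor N^2/4\rfloor\ge\lfloor (r^2+1)^2/4\rfloor\ge 2(r+1)\quad\text{for } r\ge2 .
\]
Strictness comes from $x^2>a^*c^*$, so $r=2$ needs no separate treatment. This uniform handling of all $q<q^*$ is exactly what the paper's factor $(\beta-q)\bigl[(N-\beta-q)x^2-(r+1)(N-\beta-q+1)\bigr]$ achieves.

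\textbf{Drop the fallback plan.} You proposed arguing without a bound on $n$, but the lemma is false there. Take $r=10$, $n=19$, $q=3$. The hypothesis $n-q=16\le r(q-1)+1=21$ holds, yet
\[
\rho^2(D(5,11;1,2))=\tfrac{26+\sqrt{236}}{2}\approx 20.68 \;>\; \rho^2(D(4,11;1,3))=\tfrac{27+\sqrt{201}}{2}\approx 20.59 .
\]
So the ambient assumption $n\ge r^2+r+2$, which the paper's proof also uses, is essential. No check of small configurations can replace it.
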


\begin{proof}
Since $n-q\leq r(q-1)+1$ and $q\leq\lfloor\frac{n}{2}\rfloor,$ $\big\lfloor\frac{n+r-1}{r+1}\big\rfloor\leq q\leq\lfloor\frac{n}{2}\rfloor.$
Let $G_1=D(n-q-r-1,r+1;1,q-1).$ For short, let $\beta=\big\lfloor\frac{n-r-1}{2}\big\rfloor.$ Then $\big\lfloor\frac{n+r-1}{r+1}\big\rfloor\leq \beta\leq\lfloor\frac{n}{2}\rfloor.$
If $q=\beta,$ then $\rho(G_1)=\rho(D(n-\beta-r-1,r+1;1,\beta-1)).$ Next we will prove that $\rho(G_1)<\rho(D(n-\beta-r-1,r+1;1,\beta-1))$ for $q\neq \beta.$
Note that $A(G_1)$ has an equitable quotient matrix
$$
R(A(G_1))=\left[
\begin{array}{cccc}
0 &0 &1 &q-1\\
0 &0 &1 &0\\
n-q-r-1& r+1 & 0 & 0\\
n-q-r-1& 0 & 0 & 0
\end{array}
\right].
$$
The characteristic polynomial of $R(A(G_1))$ is
$$h_1(x)=x^4+(q^2-nq +rq+q-r-1)x^2+(r+1)(q-1)(n-q-r-1).$$
Let $G_2=D(n-\beta-r-1,r+1; 1,\beta-1).$ Then $A(G_2)$ has an equitable quotient  matrix
$$
R(A(G_2))=\left[
\begin{array}{cccc}
0 &0 &1 &\beta-1\\
0 &0 &1 &0\\
n-\beta-r-1& r+1 & 0 & 0\\
n-\beta-r-1& 0 & 0 & 0
\end{array}
\right].
$$
The characteristic polynomial of $R(A(G_2))$ is
$$h_2(x)=x^4+(\beta^2-n\beta+r\beta+\beta-r-1)x^2+(r+1)(\beta-1)(n-\beta-r-1).$$
Note that $G_2$ contains $K_{n-\beta-r-1,\beta}$ as a
proper subgraph. Combining Lemma \ref{le2}, we have
\begin{eqnarray*}
\rho(G_2)>\rho(K_{n-\beta-r-1,\beta})=\sqrt{(n-\beta-r-1)\beta}.
\end{eqnarray*}
Let
$h_3(x)=h_1(x)-h_2(x)=(\beta-q)((n-r-\beta-q-1)x^2+r^2+(\beta+q+1-n)r+\beta+q-n).$

If $q\leq \beta-1,$ then $\beta-q\geq1$ and $n-r-\beta-q-1>0,$ which implies that $h_3(x)$ is increasing on $x\geq
\sqrt{(n-\beta-r-1)\beta}$. Combining $n\geq r^2+r+2$ and $\frac{n-r-2}{2}\leq\beta\leq \frac{n-r-1}{2}$, we obtain that $h_3(x)\geq h_3(\sqrt{(n-\beta-r-1)\beta})\geq \frac{r^4}{4}+\frac{r^2}{2}-2r-\frac{7}{4}>0$. By Lemma \ref{le1}, we have $\rho(G_1)<\rho\left(D\left(\big\lceil\frac{n-r-1}{2}\big\rceil,r+1;1, \big\lfloor\frac{n-r-1}{2}\big\rfloor-1\right)\right).$

If $q\geq \beta+1$, then $\beta-q<0$ and $n-r-\beta-q-1\leq0$. It follows that $h_3(x)$ is increasing for $x\geq \sqrt{(n-\beta-r-1)\beta}$. By $n\geq r^2+r+2$ and $\frac{n-r-2}{2}\leq\beta\leq \frac{n-r-1}{2}$, we deduce that
$h_3(x)\geq h_3(\sqrt{(n-\beta-r-1)\beta})\geq r+1>0$. By Lemma \ref{le1}, we have $\rho(G_1)<\rho\left(D\left(\big\lceil\frac{n-r-1}{2}\big\rceil,r+1;1, \big\lfloor\frac{n-r-1}{2}\big\rfloor-1\right)\right).$
\end{proof}

Recall that $\rho'=\rho\left(K_{n-\big\lfloor\frac{n-1}{r+1}\big\rfloor, \big\lfloor\frac{n-1}{r+1}\big\rfloor}\right)$ and
$\rho''=\rho\left(D\left(\big\lceil\frac{n-r-1}{2}\big\rceil,r+1;1, \big\lfloor\frac{n-r-1}{2}\big\rfloor-1\right)\right).$

\medskip
\noindent  \textbf{Proof of Theorem \ref{thm7}.}
Let $\tilde{B^*_{\rho}}=(X,Y)$  be the bipartite graph with the maximum spectral radius in $\mathcal{B}_{n,r}.$ Without loss of generality, we assume that $|X|\geq |Y|.$ Let $|Y|=q.$ Then $|X|=n-q$ and $q\leq \big\lfloor\frac{n}{2}\big\rfloor.$

(i) Note that $r=1.$ We claim that $|X|=n-q\geq q+1.$ In fact, if $n-q\leq q,$ then $n=2q,$ which implies that $n$ is even. By Lemma \ref{le8} (iii), we have $\tilde{B^*_{\rho}}\cong D(\frac{n}{2}-2,2;1,\frac{n}{2}-1).$ Note that $K_{n-\big\lfloor\frac{n-1}{2}\big\rfloor,\big\lfloor\frac{n-1}{2}\big\rfloor}\in \mathcal{B}_{n,1}.$ Combining Lemma \ref{le7}, we obtain that $$\rho\big(\tilde{B^*_{\rho}}\big)=\rho\Big( D\Big(\frac{n}{2}-2,2;1,\frac{n}{2}-1\Big)\Big)\leq \sqrt{e\Big(D\Big(\frac{n}{2}-2,2;1,\frac{n}{2}-1\Big)\Big)}<\rho(K_{n-\big\lfloor\frac{n-1}{2}\big\rfloor,\big\lfloor\frac{n-1}{2}\big\rfloor}),$$
which contradicts the maximality of $\tilde{B^*_{\rho}}.$ Hence $n-q\geq q+1,$ and $q\leq\big\lfloor\frac{n-1}{2}\big\rfloor.$ By Lemma \ref{le8} (i), we have $\tilde{B^*_{\rho}}\cong K_{n-q,q}.$ Since $q\leq\big\lfloor\frac{n-1}{2}\big\rfloor,$ $\tilde{B^*_{\rho}}\cong K_{n-\big\lfloor\frac{n-1}{2}\big\rfloor,\big\lfloor\frac{n-1}{2}\big\rfloor}.$

(ii) Note that $r\geq2.$ We first prove that $n-q\geq rq+1.$ Suppose to the contrary that $n-q\leq rq.$ If $r(q-1)+2\leq n-q\leq rq,$ then $\left\lceil\frac{n}{r+1}\right\rceil\leq q\leq\big\lfloor\frac{n+r-2}{r+1}\big\rfloor.$ By Lemma \ref{le8} (ii), we have $\tilde{B^*_{\rho}}\cong D(n-(r+1)q+r-1,r(q-1)+1; q-1,1).$ Note that $K_{n-\big\lfloor\frac{n-1}{r+1}\big\rfloor,
\big\lfloor\frac{n-1}{r+1}\big\rfloor}\in \mathcal{B}_{n,r}.$ Combining Lemma \ref{le7} and $n\geq r^2+r+2,$ $$\rho(\tilde{B^*_{\rho}})\leq \sqrt{(n-q)q-r(q-1)-1}\leq \sqrt{\Big\lfloor\frac{n+r-2}{r+1}\Big\rfloor\Big(n-r-\Big\lfloor\frac{n+r-2}{r+1}\Big\rfloor\Big)+r-1}<\rho',$$ which contradicts the maximality of $\tilde{B^*_{\rho}}.$  If $n-q\leq r(q-1)+1,$ then by Lemma \ref{le8} (iii), we have $\tilde{B^*_{\rho}}\cong D(n-q-r-1,r+1;1,q-1).$ By $\rho'>\rho''$ and Lemma \ref{le12}, we have
\begin{eqnarray*}
\rho(\tilde{B^*_{\rho}})=\rho(D(n-q-r-1,r+1;1,q-1))\leq \rho''<\rho',
\end{eqnarray*}
which contradicts the maximality of $\tilde{B^*_{\rho}}.$ Hence $n-q\geq rq+1,$ and $1\leq q\leq\big\lfloor\frac{n-1}{r+1}\big\rfloor.$ By Lemma \ref{le8} (i), we have $\tilde{B^*_{\rho}}\cong K_{n-q,q}.$
Since $q\leq\big\lfloor\frac{n-1}{r+1}\big\rfloor,$ $\tilde{B^*_{\rho}}\cong K_{n-\big\lfloor\frac{n-1}{r+1}\big\rfloor,\big\lfloor\frac{n-1}{r+1}\big\rfloor}.$

(iii) Note that $r\geq2.$ We claim that $n-q\leq r(q-1)+1.$ By contradiction, assume that $n-q\geq r(q-1)+2.$ If $r(q-1)+2\leq n-q\leq rq,$ then by Lemma \ref{le8} (ii), we have $\tilde{B^*_{\rho}}\cong D(n-(r+1)q+r-1,r(q-1)+1; q-1,1).$ Note that
$D\big(\big\lceil\frac{n-r-1}{2}\big\rceil,r+1;1,
\big\lfloor\frac{n-r-1}{2}\big\rfloor-1\big)\in \mathcal{B}_{n,r}.$ Combining Lemma \ref{le7}, $n\geq r^2+r+2$ and $\rho'<\rho'',$ we can obtain that $$\rho(\tilde{B^*_{\rho}})\leq \sqrt{(n-q)q-r(q-1)-1}\leq \sqrt{\Big\lfloor\frac{n+r-2}{r+1}\Big\rfloor\Big(n-r-\Big\lfloor\frac{n+r-2}{r+1}\Big\rfloor\Big)+r-1}<\rho'<\rho'',$$
which contradicts the maximality of $\tilde{B^*_{\rho}}.$ If $n-q\geq rq+1,$ then $1\leq q\leq\big\lfloor\frac{n-1}{r+1}\big\rfloor.$ By Lemma \ref{le8} (i), we have $\tilde{B^*_{\rho}}\cong K_{n-q,q}.$ By $\rho'<\rho''$ and $q\leq\big\lfloor\frac{n-1}{r+1}\big\rfloor,$ we have $$\rho(\tilde{B^*_{\rho}})\leq \rho(K_{n-\big\lfloor\frac{n-1}{r+1}\big\rfloor,\big\lfloor\frac{n-1}{r+1}\big\rfloor})=\rho'<\rho'',$$ which contradicts the maximality of $\tilde{B^*_{\rho}}.$ Hence $n-q\leq r(q-1)+1.$ By Lemma \ref{le8} (iii), we have $\tilde{B^*_{\rho}}\cong D(n-q-r-1,r+1;1,q-1).$ If $q\neq\big\lfloor\frac{n-r-1}{2}\big\rfloor,$ then by Lemma \ref{le12}, we have $\rho(\tilde{B^*_{\rho}})<\rho'',$ a contradiction. Hence $q=\big\lfloor\frac{n-r-1}{2}\big\rfloor,$ which implies that $\tilde{B^*_{\rho}}\cong D\left(\big\lceil\frac{n-r-1}{2}\big\rceil,r+1;1, \big\lfloor\frac{n-r-1}{2}\big\rfloor-1\right).$

(iv) Based on the proofs (ii) and (iii), (iv) directly follows.\hspace*{\fill}$\Box$

\section{Concluding remarks}

The bipartite binding number was introduced in \cite{qian2001, Hu2013} to characterize structural properties of bipartite graphs. For a bipartite
graph $G=(X,Y),$ its {\it bipartite binding
number} $b^B(G)$ is defined as follows. If $G=K_{|X|,|Y|},$ then
$b^B(G)=\min\{|X|,|Y|\}.$ Otherwise,
$$b^B(G)=\min\left\{\min_{\substack{\emptyset\neq S\subseteq X \\ N_G(S) \subsetneq
Y}} \frac{|N_G(S)|}{|S|}, \min_{\substack{\emptyset\neq T\subseteq Y \\ N_G(T)
\subsetneq X}}
\frac{|N_G(T)|}{|T|}\right\}.$$
Hao et al.\cite{Hao2025} characterized the extremal graphs with the maximum size (spectral radius) among all connected balanced bipartite graphs with $b^B(G)<r$, where $r\geq1$ is an integer.

\begin{thm}[Hao et al.\cite{Hao2025}]\label{thm10}
Let $r$ be a positive integer, and let $G$ be a connected balanced bipartite graph of order $2n$ such that $b^B(G)<r.$ Each of the following holds.\\
(i) If $r=1$ and $n\geq3,$ then $e(G)\leq n^2-2n+2$ with equality if and only if $G\cong D(n-2, 2; 1, n-1).$\\
(ii) If $r\geq2,$ $n\geq r+1$ and $n\equiv 0 ~(\rm{mod}$$~r),$ then $e(G)\leq n^2-\frac{n}{r}$ with equality if and only if $G\cong D(n-\frac{n}{r}, \frac{n}{r}; n-1, 1).$\\
(iii) If $r\geq2,$ $n\geq r+1$ and $n\not\equiv 0 ~(\rm{mod}$$~ r)$, then $e(G)\leq n^2-\big\lceil\frac{n+1}{r}\big\rceil$ with equality if and only if $G\cong D(n-\big\lceil\frac{n+1}{r}\big\rceil,\big\lceil\frac{n+1}{r}\big\rceil; n-1, 1).$
\end{thm}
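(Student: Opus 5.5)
This is a cited result of Hao et al. (Theorem~\ref{thm10}), but it can be re-derived with the machinery of Lemma~\ref{le6}, adapted from the ordinary binding number to $b^B$. Let $G=(X,Y)$ be an edge-maximal connected balanced bipartite graph with $|X|=|Y|=n$ and $b^B(G)<r$; clearly $G\ne K_{n,n}$, since $b^B(K_{n,n})=n\geq r$ in the ranges considered. Choose $S$ attaining $b^B(G)$, say $S\subseteq X$ by symmetry, with $N(S)\subsetneq Y$. Put $|S|=s$ and $|N(S)|=t$, so $t<rs$, i.e. $t\le rs-1$.

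The first step is saturation, exactly as in Claim~\ref{claim8}. Adding any missing edge between $X\setminus S$ and $Y$, or between $S$ and $N(S)$, leaves $N(S)$ unchanged and keeps the graph connected. So $G[(X\setminus S)\cup Y]$ is complete bipartite and $S$ is joined completely to $N(S)$, which gives
\[
G\cong D(n-s,\,s;\,t,\,n-t),\qquad e(G)=n^2-s(n-t).
\]
Connectivity forces $s\le n-1$ whenever $t\le n-1$, and $t\ge1$.

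Next comes the reduction to a single parameter. To maximize $e(G)$ we want $s(n-t)$ minimal, so for fixed $s$ we take $t$ as large as allowed, namely $t=\min\{rs-1,\,n-1\}$. This splits into two regimes:
\begin{itemize}
\item If $rs-1\le n-1$, the deficiency is $s(n-rs+1)$.
\item If $rs-1\ge n$, we take $t=n-1$ and the deficiency is $s$, which is smallest when $s=\lceil (n+1)/r\rceil$.
\end{itemize}
For $r=1$ only the first regime is available ($t\le s-1$), and the deficiency $s(n-s+1)$ is concave in $s$. It is therefore minimized at an endpoint, $s=1$ (which forces $t=0$, excluded) or $s=n-1$, $t=n-2$; this yields $n^2-2n+2$. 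Comparing with the symmetric choice gives the extremal graph $D(n-2,2;1,n-1)$ after relabelling sides.

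For $r\ge2$, the second regime gives deficiency $\lceil (n+1)/r\rceil$. When $r\mid n$, the choice $s=n/r$, $t=n-1$ satisfies $t<rs=n$ and is therefore allowed; its deficiency is $n/r$, which is what part (ii) states. Otherwise the deficiency is $\lceil (n+1)/r\rceil=\lceil n/r\rceil$.

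The main obstacle is showing that the first regime never beats this. The function $s(n-rs+1)$ is concave in $s$ on $1\le s\le n/r$, so it suffices to check the endpoints. At $s=1$ the deficiency is $n-r+1$, which must be compared with $\lceil n/r\rceil$. This holds since $n\ge r+1$ and $r\ge2$, with strict inequality except in small boundary cases that must be checked by hand for uniqueness. Finally, uniqueness of the extremal graph follows by tracking where equality holds in each comparison.
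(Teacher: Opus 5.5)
The paper only quotes this theorem from Hao et al., so there is no in-paper proof to compare against. Your reduction is the saturation argument of Lemma~\ref{le6} carried over to $b^B$, and it is the right approach. Any admissible $G$ with witness $S\subseteq X$ is a spanning subgraph of $K_{n,n}$ minus the complete bipartite graph between $S$ and $Y\setminus N(S)$. Everything then reduces to minimizing $s(n-t)$ subject to $1\le t\le\min\{rs-1,n-1\}$ and $s\le n-1$. That part, and your treatment of (ii), are fine. The gap is that you do not finish the optimization for (iii), and you do not settle uniqueness. You defer both to ``small boundary cases that must be checked by hand''. Uniqueness in fact needs an idea you never state.

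\textbf{Optimization in (iii).} Concavity of $s(n-rs+1)$ on $1\le s\le\lfloor n/r\rfloor$ requires comparing \emph{both} endpoints with $\lceil n/r\rceil$, but you check only $s=1$. Write $n=qr+m$ with $0\le m\le r-1$.
\begin{itemize}
\item The right endpoint $s=q$ gives deficiency $q(m+1)$. For $m\ge1$ you still need $q(m+1)-(q+1)=qm-1\ge0$.
\item The left endpoint gives $(n-r+1)-\lceil n/r\rceil=(q-1)(r-1)+m-1\ge0$ when $m\ge1$.
\item Both comparisons are ties exactly when $q=m=1$, i.e.\ $n=r+1$.
\end{itemize}

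\textbf{Uniqueness.} When $n=r+1$, the pair $(s,t)=(1,n-2)$ gives $D(n-1,1;n-2,2)$. This graph is connected, its witness ratio is $(r-1)/1<r$, and it has $n^2-2$ edges. So your parametrization has a second maximizer besides $D(n-2,2;n-1,1)$. Uniqueness in (iii) survives only because $D(n-1,1;n-2,2)=K_{n,n}-K_{1,2}$ and $D(n-2,2;n-1,1)=K_{n,n}-K_{2,1}$. Since $|X|=|Y|$, interchanging the two sides is an isomorphism between them. You need to say this explicitly.

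\textbf{The case $r=1$.} The same observation is what handles $r=1$. There $t\ge1$ forces the admissible range to be $2\le s\le n-1$, not an interval with endpoint $s=1$. Both endpoints give deficiency $2(n-1)$. The resulting graphs $D(n-2,2;1,n-1)$ and $D(1,n-1;n-2,2)$ are again isomorphic by swapping sides.

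With these checks added, together with strict concavity to rule out interior values of $s$, your argument proves the theorem.
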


\begin{thm}[Hao et al.\cite{Hao2025}]\label{thm11}
Let $r$ be a positive integer, and let $G$ be a connected balanced bipartite graph of order $2n$ such that $b^B(G)<r.$ Each of the following holds.\\
(i) If $r=1$ and $n\geq3,$ then $\rho(G)\leq\rho(D(n-2, 2; 1, n-1))$ with equality if and only if $G\cong D(n-2, 2; 1, n-1).$\\
(ii) If $r\geq2,$ $n\geq r+1$ and $n\equiv 0 ~(\rm{mod}$$~r),$ then $\rho(G)\leq\rho(D(n-\frac{n}{r},\frac{n}{r}; n-1, 1))$ with equality if and only if $G\cong D(n-\frac{n}{r}, \frac{n}{r}; n-1, 1).$\\
(iii) If $r\geq2,$ $n\geq r+1$ and $n\not\equiv 0 ~(\rm{mod}$$~r),$ then $\rho(G)\leq\rho(D(n-\big\lceil\frac{n+1}{r}\big\rceil, \big\lceil\frac{n+1}{r}\big\rceil; n-1, 1))$ with equality if and only if $G\cong D(n-\big\lceil\frac{n+1}{r}\big\rceil, \big\lceil\frac{n+1}{r}\big\rceil; n-1, 1).$
\end{thm}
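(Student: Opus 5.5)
We would follow the same template as Lemma \ref{le8}: first reduce the extremal graph to a double nested graph with two blocks on each side, then compare spectral radii inside that two-parameter family using equitable quotient matrices.

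\textbf{Reduction to the family $D(n-s,s;t,n-t)$.} Let $G=(X,Y)$, with $|X|=|Y|=n$, be a graph attaining the maximum spectral radius among connected balanced bipartite graphs with $b^B(G)<r$. If $G=K_{n,n}$ then $b^B(G)=n\geq r$, so $G$ is not complete. Hence there is, by symmetry, a set $\emptyset\neq S\subseteq X$ with $N_G(S)\subsetneq Y$ and $|N_G(S)|<r|S|$. Write $s=|S|$ and $t=|N_G(S)|$, so $1\le t\le n-1$ and $t\le rs-1$.

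Adding an edge between $X\setminus S$ and $Y$, or between $S$ and $N_G(S)$, leaves $N(S)$ unchanged. So the same $S$ still witnesses $b^B<r$, and the new graph is still not complete. By Lemma \ref{le2} and connectivity, such an edge would strictly increase $\rho$. Therefore $G\cong D(n-s,s;t,n-t)$, exactly as in Claims \ref{claim13}--\ref{claim14}.

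Two further constraints follow from the same maximality argument. First, $s\le n-1$: if $s=n$, the vertices of $Y\setminus N(S)$ would be isolated. Second, $s$ is the least integer with $t<rs$: otherwise we could move a vertex of $S$ to $X\setminus S$, which is again a proper spanning supergraph. We must also check that the candidate's other side does not accidentally violate the condition, but this only helps, since we only need $b^B<r$. The sets from the $Y$-side, such as $Y\setminus N(S)$ with neighbourhood $X\setminus S$, have to be examined to confirm the minimal admissible $s$ for each $t$; this is where the exact formula $\lceil\frac{n+1}{r}\rceil$ in the non-divisible case should be pinned down. We now have a finite family indexed by $t$, with $s=s(t)$ determined.

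\textbf{Comparing within the family.} The quotient matrix of $D(n-s,s;t,n-t)$ has characteristic polynomial
\[
\phi_{s,t}(x)=x^4+\big(s(n-t)-n^2\big)x^2+ts(n-t)(n-s),
\]
by the same computation as $g_5$. For $r\ge2$ the candidate is $t=n-1$, $s=s(n-1)$. Its spectral radius exceeds $\sqrt{n(n-1)}$ because it properly contains $K_{n,n-1}$. We then show $\phi_{s(t),t}(x)-\phi_{s(n-1),n-1}(x)>0$ for $x\ge\sqrt{n(n-1)}$ whenever $t\le n-2$. We factor out $(n-1-t)$, as in $g_5-g_6=(q-t-1)g_7$, and check that the remaining quadratic in $x^2$ has nonnegative leading coefficient and is positive at $x^2=n(n-1)$, using $n\ge r+1$. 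Lemma \ref{le1} then yields the strict inequality of spectral radii, and uniqueness follows.

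For $r=1$ the condition $t<s\le n-1$ forces small $t$, and the competitor is $t=1$, $s=2$. We compare against $D(n-2,2;1,n-1)$ exactly as in Lemma \ref{le8}(iii), using the lower bound $\rho>\sqrt{(n-2)n}$. For larger $t$ with $t\le q-3$ we would use the Perron-vector edge-switching argument of Lemma \ref{le7.0}.

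\textbf{Main obstacle.} We expect the hardest step to be the polynomial positivity estimate when $t$ is close to the extreme value and $s(t)$ jumps because of the floor or ceiling. For $r=1$ this happens at $t=n-2$; for $r\ge2$ it happens near $t=n-2$ with the divisibility cases. The quadratic lower bound may fail there, and we would first try the Perron-vector switching computation of Lemma \ref{le7.0}, moving the edges between the two configurations.
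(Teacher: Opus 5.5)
The paper gives no proof of this statement; it is quoted from Hao et al.\ in the concluding remarks. So your plan can only be judged on its own terms. The reduction is fine. Saturation gives $G\cong D(n-s,s;t,n-t)$, and your vertex-moving argument forces $s=\lfloor t/r\rfloor+1$. At $t=n-1$ this is $\sigma:=\lfloor\frac{n-1}{r}\rfloor+1$, which equals $\frac nr$ if $r\mid n$ and $\lceil\frac{n+1}{r}\rceil$ otherwise; no $Y$-side sets are involved.

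\textbf{The genuine gap is in the comparison step.} Since $|X|=|Y|$, swapping the colour classes gives $D(n-s,s;t,n-t)\cong D(t,n-t;n-s,s)$. This produces exact coincidences inside your family:
\begin{itemize}
\item for $r=1$, the member $t=n-2$, $s=n-1$ is $D(1,n-1;n-2,2)\cong D(n-2,2;1,n-1)$;
\item for $r\ge 2$ and $n=r+1$, the member $t=n-2$, $s=1$ is $D(n-1,1;n-2,2)\cong D(n-2,2;n-1,1)$.
\end{itemize}
In both cases the two quotient polynomials are identical. So your claim that $\phi_{s(t),t}(x)>\phi_{\sigma,n-1}(x)$ ``whenever $t\le n-2$'' is false. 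The Perron-vector switching you propose as a fallback cannot succeed either, because the spectral radii are equal. The ``main obstacle'' you anticipate at $t=n-2$ is not a weak estimate but this isomorphism. You must recognise these members as the extremal graph itself, which the uniqueness clause permits, and exclude them from the strict comparison.

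\textbf{The factorization you borrow from $g_5-g_6=(q-t-1)g_7$ is not available.} In Lemma \ref{le8} one has $s=rt+1$, which is linear in $t$. Here $s=\lfloor t/r\rfloor+1$, so $\phi_{s(t),t}-\phi_{\sigma,n-1}$ is not a polynomial in $t$ with a factor $n-1-t$. Write $A=s(n-t)$. The difference is $(A-\sigma)x^2+At(n-s)-\sigma(n-1)(n-\sigma)$. You therefore need, for every non-swapped pair $(s,t)$,
\[
A\ge\sigma \quad\text{and}\quad A\,[n(n-1)+t(n-s)]>\sigma(n-1)(2n-\sigma).
\]
This has to be proved directly from $r(s-1)\le t\le rs-1$ and $r(\sigma-1)\le n-1\le r\sigma-1$. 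Discarding the $t(n-s)$ term is already too lossy when $n=r+2$, so this is where the real work lies.

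\textbf{Your treatment of $r=1$ needs correcting.} The parameter $t$ is not forced to be small: $t=s-1$ runs over $1,\dots,n-2$. Lemma \ref{le7.0} cannot be used, because its hypothesis $\lfloor\frac{p-2}{r}\rfloor\le q-3$ becomes $n-2\le n-3$.

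\textbf{A cleaner route for (i).} For balanced bipartite graphs, $b^B(G)<1$ is equivalent to $b(G)<1$. By Lemma \ref{le4} a binding set may be taken one-sided, and a one-sided $S$ with $|N(S)|<|S|\le n$ automatically misses part of the other side. Hence (i) is Lemma \ref{le8}(iii) with $p=q=n$ and $r=1$, again modulo the swapped copy at $t=n-2$.
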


With the above characterization on connected balanced bipartite graphs, one may ask the following problem: What is the maximum size (spectral radius) for all bipartite graphs satisfying $b^B(G)<r,$ where $r\geq1$ is an integer? The methods and techniques developed in our work offer a viable and insightful approach to tackling this problem.

\vspace{5mm}
\noindent
{\bf Declaration of competing interest}
\vspace{3mm}

The authors declare that they have no known competing financial interests or personal relationships that could have appeared to influence the work reported in this paper.

\vspace{5mm}
\noindent
{\bf Data availability}
\vspace{3mm}

No data was used for the research described in this paper.

%

\end{document}